\documentclass[11pt]{article}
\usepackage{amsfonts}
\usepackage{mathrsfs}
\usepackage{amssymb,amsmath,mathrsfs, amsthm}
\usepackage{palatino,cite}
\usepackage{graphicx}
\usepackage{mathtools}
\usepackage{hyperref,cases,anysize,enumerate}

\textwidth=6.5in \textheight=8.9in
\hoffset-1.0cm \voffset-0.6cm

\numberwithin{equation}{section}

\newtheorem{theorem}{Theorem}[section]

\newtheorem{proposition}{Proposition}[section]
\theoremstyle{definition}
\newtheorem{definition}{Definition}[section]
\newtheorem{remark}{Remark}[section]
\newtheorem{example}{Example}[section]
\theoremstyle{remark}

\date{}

\begin{document}
	
	\title{Geometry of holomorphic invariant strongly pseudoconvex complex Finsler metrics on the classical domains}
	\author{Xiaoshu Ge\; (gexiaoshu@stu.xmu.edu.cn)\\
		School of Mathematical Sciences, Xiamen
		University\\ Xiamen 361005, China
		\and
		Chunping Zhong\footnote{Corresponding author}\; (zcp@xmu.edu.cn)
		\\
		School of Mathematical Sciences, Xiamen
		University\\ Xiamen 361005, China
	}
	
	\date{}
	\maketitle
	\begin{abstract}
		In this paper, a class of holomorphic invariant metrics is introduced on the irreducible classical domains of type $I$-$IV$,
		which are strongly pseudoconvex complex Finsler metrics in the strict sense of M. Abate and G. Patrizio. These metrics are of particular interest
		in several complex variables since they are holomorphic invariant complex Finsler metrics found so far in literature which  enjoy good regularity as well as
		strong pseudoconvexity and can be explicitly expressed so as to admit differential geometric studies. They are, however, not necessarily Hermitian quadratic
		as that of the Bergman metrics. These metrics are explicitly constructed via deformation of the corresponding Bergman metric on the irreducible classical domains
		of type $I-IV$, respectively, and they are all proved to be complete K\"ahler-Berwald metrics. They enjoy very similar curvature properties as that
		of the Bergman metric on the irreducible classical domains, namely their holomorphic sectional curvatures are bounded between two negative constants, 
		and their holomorphic bisectional curvatures are always non
		positive and bounded below by negative constants, respectively. From the viewpoint of complex analysis,
		these metrics are analogues of Bergman metrics in complex Finsler geometry which do not necessarily have Hermitian quadratic restrictions in the viewpoint
		of S.-S. Chern.
	\end{abstract}
	\textbf{Keywords:} Holomorphic invariant metric, K\"ahler-Berwald metric, irreducible classical domains.\\
	\textbf{MSC(2010):}  53C60, 32F45, 32M10.\\
	
	\section{Introduction and main results}\label{section-1}
	
	According to S.-S. Chern \cite{Ch}, Finsler geometry is just Riemannian geometry without quadratic restrictions.
	In real Finsler geometry, there are abundant examples of non-quadratic Finsler metrics with various kinds of curvature properties.
	As far as complex Finsler geometry is concerned, there are also lots of strongly pseudoconvex complex Finsler metrics in the strict sense of M. Abate and G. Patrizio \cite{AP}. Indeed, it makes sense to develop  Hermitian geometry without Hermitian quadratic restriction and investigate various topics in complex Finsler geometry at least in the sense of smooth category (cf. \cite{Zh0,Zh1}, \cite{XZ0, XZ1}, \cite{Zh2}).
	
	In function theory of one complex variable, the Riemann mapping theorem implies that the open unit disc is the representative domain in $\mathbb{C}$. In $\mathbb{C}^n(n\geq 2)$, however, even the unit ball $B_n$ and the unit polydisc $P_n$ are not biholomorphically equivalent. Hence the Riemann mapping theorem does not hold in higher dimensional complex space. Thus from the viewpoint of complex analysis, it is more desirable to use holomorphic invariant metrics in dealing with related topics in geometric function theory of
	several complex variables (cf. \cite{Look1,Look2},\cite{Kobayashi0,Kobayashi1}, \cite{Dineen}, \cite{Look3}).
	On bounded domains in $\mathbb{C}^n$, the natural candidates of such metrics are the Bergman metrics, which are Hermitian quadratic (cf. \cite{Look1,Look2,Look3}).
	On the other hand, most of the intrinsic metrics arising in several complex variables are complex Finsler metrics, for instance the Carath\'eodory metric, the Kobayashi metric, the Sibony metric, the Azukawa metric, etc. In general, however, these metrics do not have enough regularities to admit differential geometric studies.  We refer the readers to S. Kobayashi \cite{Kobayashi0, Kobayashi1}, S. Dineen \cite{Dineen} and the references therein for more details.
	On a convex domain  $D\subset \mathbb{C}^n$, a fundamental theorem of Lempert \cite{Le} states
	that the Carath\'eodory metric and Kobayashi metric on $D$ coincide and they have constant holomorphic curvatures $-4$; if furthermore $D$ is bounded and strongly convex, then the Carath\'eodory metric and the Kobayashi metric are strongly pseudoconvex weakly K\"ahler-Finsler metrics with constant holomorphic sectional curvature $-4$ (cf. \cite{AP}). Unless in some very special cases, however, even on convex domains the Carath\'eodory and Kobayashi metrics cannot be explicitly expressed. As pointed out by M. Abate, T. Aikou and G. Patrizio in \cite{AAP}, "the lack of consideration of explicit examples made the choice of the 'right' notions in the complex setting difficult and sometimes rather artificial......, the lack of examples raises the doubt that perhaps metrics satisfying such strong conditions occur very infrequently."
	Therefore from the viewpoint of several complex variables and complex Finsler geometry, it is very natural to ask the following question.

	\textbf{Question 1}\quad  Is there a complex manifold $M$, which admits  an $\mbox{Aut}(M)$-invariant strongly pseudoconvex complex Finsler metric $F:T^{1,0}M\rightarrow [0,+\infty)$ which is not necessary Hermitian quadratic as that of the Bergman metric? Here $\mbox{Aut}(M)$ denotes the holomorphic automorphism of $M$ which consists of biholomorphic mappings from $M$ onto itself. If it exists, one can further ask whether  $F$ is a K\"ahler-Finsler metric, or a K\"ahler-Berwald metric, or a K\"ahler metric?
	
	In \cite{Zh2}, the second author of the present paper proved that on the unit ball $B_n$ in $\mathbb{C}^n$, there does not exist any $\mbox{Aut}(B_n)$-invariant strongly pseudoconvex complex Finsler metric other than a
	constant multiple of the Bergman metric, while on the unit polydisc $P_n$ in $\mathbb{C}^n(n\geq 2)$, there are infinite many $\mbox{Aut}(P_n)$-invariant strongly pseudoconvex complex Finsler metrics other than a
	constant multiple of the Bergman metric. Note that the unit ball $B_n$ is an irreducible bounded symmetric domain of rank $1$, while the unit polydisc $P_n$ is a reducible bounded symmetric domain of rank $n$. On the other hand, by Theorem 7.1 in S. Helgason \cite{Hel}, for every Hermitian symmetric space $M$ of the noncompact type, there exists a bounded symmetric domain $D$ such that $M$ is biholomorphically equivalent to $D$. Note that  irreducible Hermitian symmetric manifolds of non-compact type (resp. of compact type) have been classified by \'E. Cartan \cite{Cartan} and they are divided in six types: $I, II, III, IV, V$ and $VI$,  which can be realized as irreducible bounded symmetric domains via Harish-Chandra embedding. The first four types are called the classic domains and the last two are called the exceptional domains in L. K. Hua \cite{Hua} .

	Denote $\mathfrak{R}_A$ any one of the irreducible classical domains of type $A=I, II, III, IV$.  Denote $C_A$ and $K_A$ the Carath\'eodory metric and Kobayashi metric on $\mathfrak{R}_A$, respectively. Then for $A=I,II,III$ (cf. Theorem 2 in \cite{Suzuki})
	$$
	C_A(0;\xi)=K_A(0;\xi)=\max\left\{\mbox{positive square roots of the eigenvalues of}\; \xi\overline{\xi'}\right\},\quad \xi\in T_0^{1,0}\mathfrak{R}_A
	$$
	and
	\begin{equation}
		C_{IV}(0;\xi)=K_{IV}(0;\xi)=\sqrt{\|\xi\|^2+\sqrt{\|\xi\|^4-|\xi\xi'|^2}},\quad \xi\in T_0^{1,0}\mathfrak{R}_{IV}.\label{s11}
	\end{equation}
	That is,  the  Carath\'eodory metric and  Kobayashi metric on $\mathfrak{R}_A$  are not smooth with respect to nonzero tangent directions $\xi$, so that they are not $\mbox{Aut}(\mathfrak{R}_A)$-invariant strongly pseudoconvex complex Finsler metrics in the strict sense of  \cite{AP}.

	Since the classic domains $\mathfrak{R}_A (A=I, II, III,IV)$ play a very fundamental role in several complex variables and complex algebraic geometry, as well as complex differential geometry, it is very natural to ask the following question.
	
	\textbf{Question 2}\quad Let $\mathfrak{R}_A$ be any one of the irreducible classical domains of type $A=I,II,III,IV$ such that $\mbox{rank}(\mathfrak{R}_A)\geq 2$. Other than a constant multiple of the corresponding Bergman metric on $\mathfrak{R}_A$, are there any other $\mbox{Aut}(\mathfrak{R}_A)$-invariant strongly pseudoconvex complex Finsler metrics  on $\mathfrak{R}_A$? If there are such metrics, whether they are K\"ahler-Finsler metrics or K\"ahler-Berwald metrics?
	
	\begin{remark}
		Since $\mathfrak{R}_I(m,n)$ reduces to the unit ball $B_n$ in $\mathbb{C}^n$ whenever $m=1$ and $B_n$ admits no $\mbox{Aut}(B_n)$-invariant strongly pseudoconvex complex Finsler metrics other than a constant multiple of the Bergman metric on $B_n$, it is natural to assume that $\mbox{rank}(\mathfrak{R}_A)\geq 2$.
	\end{remark}
	
	In this paper, we give affirmative answers to \textbf{Question 1} and \textbf{Question 2} by  explicit construction of such metrics on the irreducible classical domains $\mathfrak{R}_A$ of type $A=I, II, III, IV$, respectively.
	
	To formulate our results, let's denote $\mathscr{M}(m,n)$ the complex vector space which consists of $m\times n$ matrices over the complex number field $\mathbb{C}$, and let $\mathfrak{D}$ be a domain in $\mathscr{M}(m,n)$. We also denote $V\in T_Z^{1,0}\mathfrak{D}$ to mean a complex tangent vector of type $(1,0)$ at the point $Z\in\mathfrak{D}$. Denote $\mathbb{N}$ the set of positive integers. For $i,j,k\in\mathbb{N}$, we set
	\begin{eqnarray}
		\mathfrak{B}_k(Z;V)&:=&\mbox{tr}\left\{\left[(I-Z\overline{Z'})^{-1}V(I-\overline{Z'}Z)^{-1}\overline{V'}\right]^k\right\},\label{s12}\\
		\mathcal{B}_{i,j}(Z;V,W)&:=&\mbox{tr}\left\{[(I-Z\overline{Z'})^{-1}V(I-\overline{Z'}Z)^{-1}\overline{V'}]^i[(I-Z\overline{Z'})^{-1}W(I-\overline{Z'}Z)^{-1}\overline{W'}]^j\right\},\label{s13}
	\end{eqnarray}
	where $Z\in\mathfrak{D}$ and $V,W\in T_Z^{1,0}\mathfrak{D}\cong \mathscr{M}(m,n)$.
	
	The main results in this paper are outlined as follows.
	
	\begin{theorem}\label{th11}
		Let $\mathfrak{R}_I(m;n),\;m\leq n$, be a classical domain of type $I$. For any fixed $t\in[0,+\infty),k\in\mathbb{N}$ and $k\geq 2$, define
		\begin{eqnarray*}
			F_I^2(Z;V)&=&\frac{m+n}{1+t}\left\{\mathfrak{B}_1(Z;V)
			+t\sqrt[k]{\mathfrak{B}_k(Z;V)}\right\}
		\end{eqnarray*}
		for $Z\in \mathfrak{R}_I$ and $V\in T_Z^{1,0}\mathfrak{R}_I$. Then the following assertions are true:
		
		(1) $F_I$ is a complete $\mbox{Aut}(\mathfrak{R}_I)$-invariant strongly pseudoconvex K\"ahler-Berwald metric;
		
		(2) For $Z\in\mathfrak{R}_I$ and nonzero vectors $V,W\in T_Z^{1,0}\mathfrak{R}_I$,
		the holomorphic sectional curvature $K_I$ and the holomorphic bisectional
		curvature $B_I$ of $F_I$ are given respectively by
		\begin{eqnarray*}
			K_I(Z;V)&=&-\frac{4(m+n)}{1+t}\frac{\mathfrak{B}_2(Z;V)
				+t\mathfrak{B}_k^{\frac{1}{k}-1}(Z;V)\mathfrak{B}_{k+1}(Z;V)}{F_{I}^4(Z;V)},\\
			B_{I}(Z; V, W)
			&=&-\frac{2(m+n)}{(1+t)F_{I}^2(Z;V)F_I^2(Z;W)}
			\Big\{\mathcal{B}_{1,1}(Z;V,W)+\mathcal{B}_{1,1}(\overline{Z'};\overline{V'},\overline{W'})\\
			&+&t{\mathfrak{B}_{k}^{\frac{1}{k}-1}(Z;V)}
			\left[\mathcal{B}_{k,1}(Z;V,W)+\mathcal{B}_{k,1}(\overline{Z'};\overline{V'},\overline{W'})\right]\Big\};
		\end{eqnarray*}
		
		(3) The holomorphic sectional curvature of $F_{I}$ is always negative, more precisely, it is bounded from below by $ -\frac{4}{m+n}, $ and bounded from above by $-\frac{4}{m+n}\cdot \frac{1+t}{m+t\sqrt[k]{m}}$;
		
		(4) The holomorphic bisectional curvature of $F_{I}$ is nonpositive, more precisely, it is bounded from below by $ -\frac{4}{m+n}$ and bounded from above by $0$.
	\end{theorem}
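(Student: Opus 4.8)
The plan is to use the transitivity of $\mbox{Aut}(\mathfrak{R}_I)$ throughout and to reduce every assertion to the base point $Z=0$, where $(I-Z\overline{Z'})^{-1}=(I-\overline{Z'}Z)^{-1}=I$ and $\mathfrak{B}_k(0;V)=\mathrm{tr}[(V\overline{V'})^k]$. For the invariance in (1), I would recall that an automorphism of $\mathfrak{R}_I(m,n)$ has the form $Z\mapsto(AZ+B)(CZ+D)^{-1}$ and verify by a direct computation the transformation rule
\[
(I-Z\overline{Z'})^{-1}V(I-\overline{Z'}Z)^{-1}\overline{V'}\ \longmapsto\ P\left[(I-Z\overline{Z'})^{-1}V(I-\overline{Z'}Z)^{-1}\overline{V'}\right]P^{-1}
\]
for a suitable invertible matrix $P=P(Z)$; since the trace of a power of a matrix is a similarity invariant, each $\mathfrak{B}_k(Z;V)$, and hence $F_I$, is $\mbox{Aut}(\mathfrak{R}_I)$-invariant. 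Because $\mathfrak{B}_1(Z;V)$ is a constant multiple of the squared Bergman length of $V$, one gets $F_I^2(Z;V)\geq\frac{m+n}{1+t}\mathfrak{B}_1(Z;V)=c\cdot(\text{squared Bergman length of }V)$ with $c>0$, so the $F_I$-length of any curve dominates $\sqrt{c}$ times its Bergman length and completeness of $F_I$ follows from the completeness of the Bergman metric on the bounded domain $\mathfrak{R}_I$.

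For strong pseudoconvexity I write $F_I^2=\frac{m+n}{1+t}\big(\mathfrak{B}_1+t\,\mathfrak{B}_k^{1/k}\big)$; since $\mathfrak{B}_1$ is Hermitian quadratic and positive definite in $V$, it already contributes a positive definite block to the Levi matrix $\big(\partial^2F_I^2/\partial v^\alpha\partial\overline{v^\beta}\big)$, so it suffices to prove the complex Hessian of $\mathfrak{B}_k(Z;\cdot)^{1/k}$ in $V$ is positive semidefinite away from $V=0$. Setting $A=(I-Z\overline{Z'})^{-1}$, $B=(I-\overline{Z'}Z)^{-1}$ and $\widetilde V=A^{1/2}VB^{1/2}$, the matrix $AVB\overline{V'}$ is similar to the positive semidefinite $\widetilde V\widetilde V^{*}$, whence $\mathfrak{B}_k(Z;V)=\mathrm{tr}[(\widetilde V\widetilde V^{*})^{k}]$ and $\mathfrak{B}_k(Z;V)^{1/k}$ equals the square of the Schatten $2k$-norm of $\widetilde V$. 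Since $\widetilde V$ depends $\mathbb{C}$-linearly on $V$ and the Schatten $2k$-norm (with $k\geq2$) is a genuine norm, $\mathfrak{B}_k(Z;\cdot)^{1/k}$ is the composition of a nonnegative convex function with a linear map, hence convex on $\mathscr{M}(m,n)\cong\mathbb{R}^{2mn}$, hence plurisubharmonic; since $\mathfrak{B}_k(Z;\cdot)$ is a polynomial it is smooth and strictly positive off $V=0$, so $\mathfrak{B}_k(Z;\cdot)^{1/k}$ is smooth there with positive semidefinite complex Hessian, and therefore $\big(\partial^2F_I^2/\partial v^\alpha\partial\overline{v^\beta}\big)>0$.

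The K\"ahler-Berwald property is the main obstacle. Let $\nabla$ be the canonical connection of the Hermitian symmetric space $\mathfrak{R}_I$, which coincides with the Chern connection (and the Levi-Civita connection) of the Bergman metric. On a symmetric space, parallel transport with respect to $\nabla$ along the geodesic $s\mapsto\exp(sX)\cdot o$ is implemented by the transvections $\exp(sX)\in\mbox{Aut}(\mathfrak{R}_I)$; since $F_I^2$ is $\mbox{Aut}(\mathfrak{R}_I)$-invariant, it is constant along the $\nabla$-parallel lift of every geodesic, so its horizontal differential with respect to $\pi^{*}\nabla$ vanishes identically on $T^{1,0}\mathfrak{R}_I\setminus\{0\}$. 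By the characterization of complex Berwald metrics this forces the Chern-Finsler connection of $F_I$ to coincide with $\pi^{*}\nabla$; as $\nabla$ is torsion-free (the Bergman metric being K\"ahler) $F_I$ is a complex Berwald metric which is moreover weakly K\"ahler, that is, a K\"ahler-Berwald metric. (Alternatively, one differentiates the explicit formula and checks directly that the nonlinear connection and the connection coefficients of $F_I$ are independent of $V$; this is the computational heart of the argument.)

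Once $F_I$ is K\"ahler-Berwald, the holomorphic sectional and bisectional curvatures in (2) are obtained from the curvature of $\nabla$ contracted against the $V$-dependent fundamental tensor $G_{\alpha\overline{\beta}}=\partial^2F_I^2/\partial v^\alpha\partial\overline{v^\beta}$, in the curvature formalism of Abate-Patrizio; since both sides of the asserted identities transform the same way under $\mbox{Aut}(\mathfrak{R}_I)$, it suffices to verify them at $Z=0$, which amounts to the explicit differentiation of $\mathfrak{B}_1+t\mathfrak{B}_k^{1/k}$ there. For (3) and (4) I would diagonalize $AVB\overline{V'}$ with eigenvalues $\lambda_1\geq\cdots\geq\lambda_m\geq0$, not all zero, so that $\mathfrak{B}_j(Z;V)=\sum_i\lambda_i^{\,j}$, and reduce the two-sided bounds to elementary inequalities among power sums: the Cauchy-Schwarz inequality $\big(\sum_i\lambda_i\big)^2\leq m\sum_i\lambda_i^{2}$, the power-mean inequality $\sum_i\lambda_i^{k+1}\geq m^{-1/k}\big(\sum_i\lambda_i^{k}\big)^{(k+1)/k}$, and $\max_i\lambda_i\leq\big(\sum_i\lambda_i^{k}\big)^{1/k}\leq\sum_i\lambda_i$. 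For (4) one also uses $\mathcal{B}_{i,j}(Z;V,W)=\mathrm{tr}[(\widetilde V\widetilde V^{*})^{i}(\widetilde W\widetilde W^{*})^{j}]$ together with the facts that $\mathrm{tr}(XY)\geq0$ and $\mathrm{tr}(XY)\leq\|X\|_{\mathrm{op}}\,\mathrm{tr}(Y)$ for positive semidefinite $X,Y$, which give the nonpositivity of $B_I$ and, after collecting terms and using $\|\widetilde W\widetilde W^{*}\|_{\mathrm{op}}\leq\mathfrak{B}_k(Z;W)^{1/k}$, the lower bound $-\tfrac{4}{m+n}$.
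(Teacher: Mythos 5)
Your proposal is correct in all four parts, and while its overall architecture (reduce everything to $Z=0$ by transitivity, diagonalize $V\overline{V'}$ for the bounds) matches the paper, three of your key sub-arguments are genuinely different. For strong pseudoconvexity you observe that $\mathfrak{B}_k(Z;V)^{1/k}=\|\widetilde V\|_{2k}^2$ is the squared Schatten $2k$-norm of $\widetilde V=A^{1/2}VB^{1/2}$, hence convex and therefore with positive semidefinite complex Hessian off $V=0$; the paper instead computes the Levi form of $\pmb{f}^2$ explicitly and verifies its nonnegativity through a case-by-case Cauchy--Schwarz argument on traces (Proposition \ref{prop-4-3}). Your route is shorter and more conceptual, while the paper's computation also yields the explicit lower bound $\frac{1}{1+t}\{\mbox{tr}\{W\overline{W'}\}+t\mathcal{A}^{\frac1k-1}\mbox{tr}[W\overline{W'}(V\overline{V'})^{k-1}]\}$ for the Levi form, which pins down the equality case. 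For completeness you dominate $F_I$ from below by a constant multiple of the Bergman metric; the paper instead notes (Remark \ref{RK1}) that the Chern--Finsler connection coefficients of $F_I$ coincide with those of the Bergman metric, so the two metrics share geodesics as point sets. For the K\"ahler--Berwald property you invoke transvections of the Hermitian symmetric space to show $F_I^2$ is constant along $\nabla$-horizontal lifts, which (after differentiating the horizontality identity in $\overline{v}$) forces the Chern--Finsler nonlinear connection to equal the linear one induced by $\nabla$; the paper's Theorem \ref{th-2.1} achieves the same end by checking $\partial^2F_I^2/\partial z_{ij}\partial\overline{v_{ab}}=0$ at the origin and transporting this via the coordinate transformation law \eqref{Nwz}, which has the advantage of requiring no symmetric-space machinery and of exhibiting $\varGamma_{b;c}^e$ explicitly. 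Your curvature computations and the power-sum/H\"older inequalities for parts (3) and (4) coincide with the paper's; the only point to flesh out there is that the bisectional curvature at the origin carries the two terms $\mathcal{B}_{k,1}(0;V,W)+\mathcal{B}_{k,1}(0;\overline{V'},\overline{W'})$, each of which must be bounded separately before dividing by $F_I^2(0;V)F_I^2(0;W)$ to land exactly on $-\frac{4}{m+n}$.
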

	
	For the proof of Theorem \ref{th11}, we refer to Theorem \ref{KB-FI}, Theorem \ref{C-FI}, Theorem \ref{thm-4.4} and Theorem \ref{thm4}.

	\begin{theorem}\label{th12}
		Let $\mathfrak{R}_{II}(p)$ be a classical domain of type $II$. For any fixed $t\in[0,+\infty),k\in\mathbb{N}$ and $k\geq 2$, define
		\begin{eqnarray*}
			F_{II}^2(Z;V)&=&\frac{p+1}{1+t}\left\{\mathfrak{B}_1(Z;V)
			+t\sqrt[k]{\mathfrak{B}_k(Z;V)}\right\}
		\end{eqnarray*}
		for $Z\in \mathfrak{R}_{II}$ and $V\in T_Z^{1,0}\mathfrak{R}_{II}$. Then the following assertions are true:
		
		(1) $F_{II}$ is a complete $\mbox{Aut}(\mathfrak{R}_{II})$-invariant strongly pseudoconvex K\"ahler-Berwald metric;
		
		(2) For $Z\in\mathfrak{R}_{II}(p)$ and nonzero vectors $V,W\in T_Z^{1,0}\mathfrak{R}_{II}$, the holomorphic sectional curvature $K_{II}$ and the holomorphic bisectional
		curvature $B_{II}$ of $F_{II}$ are given respectively by
		\begin{eqnarray*}
			K_{II}(Z;V)&=&-\frac{4(p+1)}{1+t}\frac{\mathfrak{B}_2(Z;V)
				+t\mathfrak{B}_k^{\frac{1}{k}-1}(Z;V)\mathfrak{B}_{k+1}(Z;V)}{F_{II}^4(Z;V)},\\
			B_{II}(Z; V, W)
			&=&-\frac{4(p+1)}{1+t}\frac{\mathcal{B}_{1,1}(Z;V,W)+t{\mathfrak{B}_{k}^{\frac{1}{k}-1}(Z;V)}
				\mathcal{B}_{k,1}(Z;V,W)}{F_{II}^2(Z;V)F_{II}^2(Z;W)};
		\end{eqnarray*}
		
		(3) The holomorphic sectional curvature of $F_{II}$ is always negative, more precisely, it is bounded from below by $ -\frac{4}{1+p}$, and bounded from above by $-\frac{4}{1+p}\cdot \frac{1+t}{p+t\sqrt[k]{p}}$;	
		
		(4) The holomorphic bisectional curvature of $F_{II}$ is nonpositive, more precisely, it is bounded from below by $-\frac{4}{1+p}$, and bounded from  above by $0$.
	\end{theorem}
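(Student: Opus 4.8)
The plan for Theorem 1.2 follows the same architecture as Theorem 1.1, but exploits the fact that $\mathfrak{R}_{II}(p)$ sits inside $\mathfrak{R}_I(p;p)$ as the submanifold of symmetric matrices $Z = Z'$. First I would recall that $\mathrm{Aut}(\mathfrak{R}_{II})$ acts transitively and, at $Z=0$, the isotropy group acts on $T_0^{1,0}\mathfrak{R}_{II}\cong\{V\in\mathscr M(p;p): V=V'\}$ by $V\mapsto UVU'$ for unitary $U$. Since the quantities $\mathfrak{B}_k(Z;V)$ are built from traces of powers of a Hermitian positive semidefinite matrix and are manifestly invariant under the full automorphism group of $\mathfrak{R}_I$ (this was already used in Theorem 1.1), their restriction to $\mathfrak{R}_{II}$ is automatically $\mathrm{Aut}(\mathfrak{R}_{II})$-invariant; hence $F_{II}$ is well-defined and invariant. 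The normalizing constant $\tfrac{p+1}{1+t}$ is chosen exactly so that $F_{II}^2(0;V)$ agrees with the Bergman metric of $\mathfrak{R}_{II}$ when $t=0$.

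\medskip

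The core analytic content — strong pseudoconvexity and the Kähler--Berwald property — I would obtain by the restriction principle together with the convexity lemma already invoked in Theorem 1.1. Concretely, write $G_t(x) := x_1 + t\,(x_k)^{1/k}$ for $x=(x_1,\dots)$ where $x_j = \mathfrak{B}_j$; one checks (as in the type $I$ case) that $(V,\overline V)\mapsto \mathfrak{B}_1(0;V)$ is the flat Hermitian form, that $V\mapsto \mathfrak{B}_k(0;V)^{1/k}$ is a strongly pseudoconvex (smooth off the zero section, strictly plurisubharmonic in the fiber) Finsler metric, and that a positive combination of such is again strongly pseudoconvex; this gives strong pseudoconvexity at $0$, and then transitivity of $\mathrm{Aut}(\mathfrak{R}_{II})$ propagates it to every point. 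For the Kähler--Berwald property I would verify that in the Harish-Chandra coordinates the fundamental tensor $G_{i\bar j}$ of $F_{II}^2$, after the usual homogeneity reductions, has holomorphic (i.e., $V$-independent) connection coefficients — equivalently, that the complex geodesic spray has holomorphic coefficients. The cleanest route is to note that $F_{II}^2$ is the restriction of $F_I^2$ to a \emph{totally geodesic} complex submanifold (the fixed-point set of the anti-holomorphic-free involution $Z\mapsto Z'$ is totally geodesic for any invariant Kähler metric), so the Berwald property is inherited from Theorem 1.1(1); completeness likewise descends since a closed totally geodesic submanifold of a complete manifold is complete.

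\medskip

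For part (2), I would compute the curvatures directly at $Z=0$ using the general Finsler curvature formulas for a metric of the form $F^2 = c(\mathfrak{B}_1 + t\,\mathfrak{B}_k^{1/k})$, differentiating the traces $\mathfrak{B}_k(Z;V) = \mathrm{tr}[(A(Z)V B(Z)\overline{V'})^k]$ in $Z$ and $\overline Z$ at the origin, where $A(Z)=(I-Z\overline{Z'})^{-1}$, $B(Z)=(I-\overline{Z'}Z)^{-1}$; the first derivatives of $A,B$ at $0$ are $\delta A = V\overline{V'}$ etc., which is what produces the $\mathfrak{B}_2$, $\mathfrak{B}_{k+1}$ and $\mathcal B_{1,1}$, $\mathcal B_{k,1}$ terms. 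Because the metric is Berwald, the curvature at $0$ computed this way equals the curvature at every point after transporting by an automorphism, so it suffices to do the computation at the origin. Note the bisectional curvature formula for type $II$ has \emph{no} conjugate-transpose companion terms $\mathcal B_{1,1}(\overline{Z'};\cdots)$ unlike type $I$ — this reflects that on symmetric matrices $Z=Z'$, so the two families of terms coincide and collapse into one (up to the factor of $2$ absorbed into the coefficient $\tfrac{4(p+1)}{1+t}$).

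\medskip

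Parts (3) and (4) are then pure estimation. Setting $s_j := \mathfrak{B}_j(Z;V)$ and recalling $m=p$ in the role played by the rank, the same inequalities used for type $I$ apply: for the holomorphic sectional curvature one bounds the ratio $\bigl(\mathfrak{B}_2 + t\,\mathfrak{B}_k^{1/k-1}\mathfrak{B}_{k+1}\bigr)/F^4$ from below and above using power-mean / Cauchy--Schwarz inequalities among the $s_j$ (with $s_1=1$ after rescaling $V$, which is legitimate by homogeneity), noting $\mathfrak{B}_2\le \mathfrak{B}_1^2$, $\mathfrak{B}_{k+1}\le \mathfrak{B}_k\,\mathfrak{B}_1$, and, at the other extreme, the eigenvalue configuration with all mass on one eigenvalue; the upper bound $-\tfrac{4}{1+p}\cdot\tfrac{1+t}{p+t\sqrt[k]{p}}$ comes from the ``isotropic'' direction where all $p$ eigenvalues of $(I-Z\overline{Z'})^{-1}V(I-\overline{Z'}Z)^{-1}\overline{V'}$ are equal. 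For the bisectional curvature, nonpositivity is immediate since $\mathcal B_{i,j}$ is a trace of a product of two Hermitian positive semidefinite matrices, hence $\ge 0$; the lower bound $-\tfrac{4}{1+p}$ follows from $\mathcal B_{1,1}(Z;V,W)\le \mathfrak{B}_1(Z;V)\mathfrak{B}_1(Z;W)$ and the analogous $\mathcal B_{k,1}\le \mathfrak{B}_k(Z;V)\mathfrak{B}_1(Z;W)$, which are again Cauchy--Schwarz for the trace inner product. The one step I expect to require genuine care is the \emph{sharpness} of the constants in (3)–(4): exhibiting explicit $(Z,V)$ (resp. $(Z,V,W)$) realizing the extremes, and checking that the symmetry constraint $V=V'$ does not forbid those configurations — but since diagonal matrices are symmetric, the extremal eigenvalue patterns remain attainable within $\mathfrak{R}_{II}$, so the bounds are the same as in type $I$ with $m$ replaced by $p$.
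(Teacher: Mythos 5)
Your overall architecture coincides with the paper's: restrict the type-$I$ Minkowski norm $\pmb{f}$ to the symmetric matrices, use the isotropy action $V\mapsto UVU'$ for invariance, verify $\partial^2F_{II}^2/\partial z\,\partial\bar v=0$ at the origin to get the K\"ahler--Berwald property and completeness, compute the curvatures at $Z=0$ by differentiating $\mathfrak{B}_l$ twice (your explanation of why the conjugate companion terms of type $I$ collapse into a single family with a factor $2$ is exactly what happens), and reduce the bounds to eigenvalue inequalities after diagonalizing $V$ as $UVU'$. Your alternative route to the Berwald property and completeness via the totally geodesic fixed-point set of $Z\mapsto Z'$ in $\mathfrak{R}_I(p,p)$ is a genuinely different and plausible idea, but it relies on facts about totally geodesic complex Finsler submanifolds you do not develop; the paper's direct verification via its Theorem \ref{th-2.1} is the safer path. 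One small imprecision: $\mathfrak{B}_k(0;V)^{1/k}$ alone is not strongly pseudoconvex --- its Levi form is only positive semidefinite (e.g.\ along directions $W$ with $W\overline{W'}(V\overline{V'})^{k-1}=0$) --- and it is only the sum with $\mathfrak{B}_1$ that is; since you add the flat term anyway, this does not break the argument.

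There is, however, a genuine quantitative gap in part (4). The inequalities you invoke, $\mathcal{B}_{1,1}\le\mathfrak{B}_1(V)\mathfrak{B}_1(W)$ and $\mathcal{B}_{k,1}\le\mathfrak{B}_k(V)\mathfrak{B}_1(W)$, only yield
$\mathcal{B}_{1,1}+t\mathfrak{B}_k^{1/k-1}(V)\mathcal{B}_{k,1}\le\bigl[\mathfrak{B}_1(V)+t\mathfrak{B}_k^{1/k}(V)\bigr]\mathfrak{B}_1(W)$,
and dividing by $F_{II}^2(V)F_{II}^2(W)=\bigl(\tfrac{p+1}{1+t}\bigr)^2\bigl[\mathfrak{B}_1(V)+t\mathfrak{B}_k^{1/k}(V)\bigr]\bigl[\mathfrak{B}_1(W)+t\mathfrak{B}_k^{1/k}(W)\bigr]$ gives $|B_{II}|\le\frac{4(1+t)}{p+1}$, which is weaker than the claimed $\frac{4}{p+1}$ whenever $t>0$. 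To recover the sharp constant you must use the operator-norm refinement $\mathrm{tr}\{AB\}\le\lambda_{\max}(A)\,\mathrm{tr}\{B\}$ together with $\lambda_{\max}(A)\le\min\bigl\{\mathrm{tr}\{A\},(\mathrm{tr}\{A^k\})^{1/k}\bigr\}$ (so that $(1+t)\lambda_{\max}(A)\le\mathrm{tr}\{A\}+t(\mathrm{tr}\{A^k\})^{1/k}$), and likewise bound $\mathcal{B}_{k,1}\le\mathfrak{B}_k(V)\lambda_{\max}(W\overline{W'})$ with $\lambda_{\max}(W\overline{W'})\le\mathfrak{B}_k^{1/k}(W)$; splitting the denominator into the two matching products, as the paper does, then produces the extra factor $\frac{1}{1+t}$ and hence the bound $-\frac{4}{p+1}$. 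A similar two-piece splitting (not just $\mathfrak{B}_2\le\mathfrak{B}_1^2$ and $\mathfrak{B}_{k+1}\le\mathfrak{B}_k\mathfrak{B}_1$, but also $(\sum x_i)(\sum x_i^k)^{1/k}\ge\sum x_i^2$ and $(\sum x_i^k)^{(k+1)/k}\ge\sum x_i^{k+1}$) is what makes the lower bound in part (3) come out as $-\frac{4}{p+1}$ rather than $-\frac{4(1+t)}{p+1}$; your sketch gestures at the right family of inequalities but the precise pairing matters.
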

	
	For the proof of Theorem \ref{th12}, we refer to Theorem \ref{KB-FII}, Theorem \ref{C-FII},
	Theorem \ref{thm-4.4} and Theorem \ref{thm4}.

	\begin{theorem}\label{th13}
		Let $ \mathfrak{R}_{III}(q)$ be a classical domain of type $III$. For any fixed $t\in[0,+\infty),k\in\mathbb{N}$ and $k\geq 2$, define
		\begin{eqnarray*}
			F_{III}^2(Z;V)&=&\frac{q-1}{1+t}\left\{\mathfrak{B}_1(Z;V)
			+t\sqrt[k]{\mathfrak{B}_k(Z;V)}\right\}
		\end{eqnarray*}
		for $Z\in \mathfrak{R}_{III}$ and $V\in T_Z^{1,0}\mathfrak{R}_{III}$. Then the following assertions are true:
		
		(1) $F_{III}$ is a complete $\mbox{Aut}(\mathfrak{R}_{III})$-invariant strongly pseudoconvex K\"ahler-Berwald metric;
		
		(2) For $Z\in\mathfrak{R}_{III}(q)$ and nonzero vectors $V,W\in T_Z^{1,0}\mathfrak{R}_{III}$,
		the holomorphic sectional curvature $K_{III}$ and the holomorphic bisectional
		curvature $B_{III}$ of $F_{III}$ are given respectively by
		\begin{eqnarray*}
			K_{III}(Z;V)&=&-\frac{4(q-1)}{1+t}\frac{\mathfrak{B}_2(Z;V)
				+t\mathfrak{B}_k^{\frac{1}{k}-1}(Z;V)\mathfrak{B}_{k+1}(Z;V)}{F_{III}^4(Z;V)},\\
			B_{III}(Z; V, W)
			&=&-\frac{4(q-1)}{1+t}\frac{\mathcal{B}_{1,1}(Z;V,W)+t{\mathfrak{B}_{k}^{\frac{1}{k}-1}(Z;V)}
				\mathcal{B}_{k,1}(Z;V,W)}{F_{III}^2(Z;V)F_{III}^2(Z;W)};
		\end{eqnarray*}
		
		(3) The holomorphic sectional curvature of $F_{III}$ is always negative, more precisely, it is bounded from below by $ -\frac{4}{q-1}\cdot\frac{1+t}{2+t\sqrt[k]{2}}$,
		and bounded from above by $-\frac{4}{q-1}\cdot\frac{1+t}{2[\frac{q}{2}]+t\sqrt[k]{2[\frac{q}{2}]}};$
		
		(4) The holomorphic bisectional curvature of $F_{III}$ is nonpositive, more precisely, it is bounded from below by $ -\frac{4}{q-1},$ and bounded from  above by $0$.
	\end{theorem}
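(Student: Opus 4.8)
The plan is to run the three-stage argument already used for Theorems~\ref{th11} and \ref{th12}, exploiting that $\mathfrak{R}_{III}(q)=\{Z\in\mathscr{M}(q,q):Z'=-Z,\ I-Z\overline{Z'}>0\}$ is homogeneous under $\mbox{Aut}(\mathfrak{R}_{III})$, with the isotropy group at $0$ acting by $Z\mapsto UZU'$, $U\in U(q)$. First I would recall the explicit form of an automorphism $\varphi$ and of its differential $d\varphi_Z$, and check the conjugation law $H(\varphi(Z);d\varphi_Z V)=S\,H(Z;V)\,S^{-1}$ for an invertible $S=S(Z,\varphi)$, where $H(Z;V):=(I-Z\overline{Z'})^{-1}V(I-\overline{Z'}Z)^{-1}\overline{V'}$; consequently the eigenvalues of $H$, hence all $\mathfrak{B}_k(Z;V)=\mbox{tr}(H^k)$ and therefore $F_{III}$, are $\mbox{Aut}(\mathfrak{R}_{III})$-invariant. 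Completeness is then immediate from $F_{III}^2(Z;V)\ge\frac{q-1}{1+t}\mathfrak{B}_1(Z;V)=\frac{1}{1+t}G_B(Z;V)$, with $G_B$ the complete Bergman metric of $\mathfrak{R}_{III}$, so that every $F_{III}$-Cauchy curve is $G_B$-Cauchy.

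For the rest of part~(1) I would linearize: put $M=M(Z;V):=(I-Z\overline{Z'})^{-1/2}V(I-\overline{Z'}Z)^{-1/2}$, which is $\mathbb{C}$-linear in $V$ and, since $Z$ is skew-symmetric (whence $I-\overline{Z'}Z=(I-Z\overline{Z'})'$), is again skew-symmetric, and which satisfies $\mathfrak{B}_1(Z;V)=\mbox{tr}(MM^*)$ and $\mathfrak{B}_k(Z;V)^{1/k}=(\mbox{tr}((MM^*)^k))^{1/k}=\|M\|_{S_{2k}}^2$. Hence along each fiber $F_{III}^2$ is a positive constant times $\|M\|_{\mathrm{HS}}^2+t\|M\|_{S_{2k}}^2$: the first summand is a positive-definite Hermitian form in $V$ (the map $V\mapsto M$ being injective), and the second is $C^\infty$ off $V=0$ (since $X\mapsto\mbox{tr}((XX^*)^k)$ is a positive polynomial there) and plurisubharmonic (a squared norm precomposed with a $\mathbb{C}$-linear map), so the Levi matrix of $F_{III}^2$ in $V$ is positive definite on $T^{1,0}\mathfrak{R}_{III}\setminus\{0\}$, and restriction to the skew-symmetric subspace preserves this. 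For the K\"ahler--Berwald assertion I would use invariance and transitivity to reduce to $Z=0$, where $F_{III}^2(0;V)=\frac{q-1}{1+t}\big(\mbox{tr}(V\overline{V'})+t(\mbox{tr}((V\overline{V'})^k))^{1/k}\big)$, and show that the fiber-dependent part of the Chern--Finsler spray coefficients vanishes --- it would be a $U(q)$-equivariant tensor built from $T_0\mathfrak{R}_{III}=\Lambda^2\mathbb{C}^q$ and its dual, which is forced to vanish by a weight count under the center $U(1)$ --- whence the Berwald property, the K\"ahler--Finsler property being checked the same way; alternatively one invokes the general fact (established earlier in the paper) that an $\mbox{Aut}$-invariant strongly pseudoconvex complex Finsler metric on $\mathfrak{R}_{III}$ is automatically K\"ahler--Berwald.

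Part~(2) follows by computing at $Z=0$ and propagating: since $F_{III}$, $\mathfrak{B}_2,\mathfrak{B}_k,\mathfrak{B}_{k+1}$ and $\mathcal{B}_{1,1},\mathcal{B}_{k,1}$ are all invariant, it suffices to verify the stated formulas at the origin, where one differentiates $F_{III}^2(0;V)$ to obtain the fundamental tensor of $F_{III}$ and substitutes into the standard expressions for the holomorphic sectional and holomorphic bisectional curvatures of a K\"ahler--Berwald metric; the bookkeeping produces exactly the numerators $\mathfrak{B}_2(Z;V)+t\mathfrak{B}_k^{1/k-1}(Z;V)\mathfrak{B}_{k+1}(Z;V)$ and $\mathcal{B}_{1,1}(Z;V,W)+t\mathfrak{B}_k^{1/k-1}(Z;V)\mathcal{B}_{k,1}(Z;V,W)$. (Unlike Theorem~\ref{th11}, no conjugate term $\mathcal{B}_{1,1}(\overline{Z'};\overline{V'},\overline{W'})$ appears: for skew-symmetric data $\overline{Z'}=-\overline{Z}$, $\overline{V'}=-\overline{V}$, $\overline{W'}=-\overline{W}$, and since the $\mathcal{B}_{i,j}$ are real this contribution coincides with the one already present and merges.)

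For parts~(3)--(4) I would diagonalize the positive semidefinite matrix $MM^*$, whose eigenvalues are those of $H(Z;V)$; because $M$ is skew-symmetric they come in equal pairs, so the top eigenvalue has multiplicity $\ge2$ and at most $2[q/2]$ of them are nonzero. Nonpositivity of $B_{III}$ is clear from $\mathcal{B}_{1,1},\mathcal{B}_{k,1}\ge0$; the lower bound $-\frac{4}{q-1}$ comes from splitting $(1+t)\big(\mathcal{B}_{1,1}(Z;V,W)+t\mathfrak{B}_k^{1/k-1}(Z;V)\mathcal{B}_{k,1}(Z;V,W)\big)$ into four pieces and dominating each, via $\mbox{tr}(AB)\le\|A\|_{\mathrm{op}}\mbox{tr}(B)$ together with $\|B\|_{\mathrm{op}}\le\min\{\mbox{tr}(B),(\mbox{tr}(B^{k}))^{1/k}\}$ for positive semidefinite $B$, by the corresponding piece of $(\mathfrak{B}_1(Z;V)+t\mathfrak{B}_k^{1/k}(Z;V))(\mathfrak{B}_1(Z;W)+t\mathfrak{B}_k^{1/k}(Z;W))$. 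For the holomorphic sectional curvature, the lower bound uses $\mbox{tr}(H^p)\le\|H\|_{\mathrm{op}}\mbox{tr}(H^{p-1})$ together with $\mathfrak{B}_1\ge2\|H\|_{\mathrm{op}}$ and $\mathfrak{B}_k^{1/k}\ge2^{1/k}\|H\|_{\mathrm{op}}$ (the factor $2$ being the multiplicity of the top eigenvalue), yielding the constant $2+t\sqrt[k]{2}$; the upper bound uses $\mbox{tr}(H^2)\ge\mathfrak{B}_1^2/(2[q/2])$ and $\mbox{tr}(H^{k+1})\ge\mathfrak{B}_k^{(k+1)/k}/(2[q/2])^{1/k}$ (Cauchy--Schwarz and H\"older over the $\le2[q/2]$ nonzero eigenvalues) followed by an AM--GM step, yielding $2[q/2]+t\sqrt[k]{2[q/2]}$. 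I expect the main work to be the K\"ahler--Berwald verification in part~(1), namely the explicit Chern--Finsler connection computation at the origin (or the equivariance argument for its vanishing); by contrast invariance, completeness and strong pseudoconvexity become short once the linearization $M$ is in hand, the curvature identities in~(2) are mechanical, and the bounds in~(3)--(4) reduce to elementary matrix inequalities once the pairing of eigenvalues forced by skew-symmetry is exploited.
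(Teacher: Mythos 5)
Your proposal follows the paper's overall strategy (reduce to the origin by homogeneity, build an $\mbox{Iso}(\mathfrak{R}_{III})$-invariant Minkowski norm, verify the vanishing of $\varGamma_{;i}^l(0;v)$ to invoke the K\"ahler--Berwald criterion of Theorem \ref{th-2.1}, compute the curvatures at $0$ via Propositions \ref{hsc} and \ref{bhsc}, and bound them after putting $V$ in the skew-symmetric normal form with paired eigenvalues). Three of your sub-arguments are genuinely different from the paper's and all three are sound: (i) completeness via the domination $F_{III}^2\ge\frac{1}{1+t}\,ds_{III}^2$ by the complete Bergman metric, where the paper instead argues (Remark \ref{RK1}) that $F_{III}$ and the Bergman metric share the same connection coefficients and hence the same geodesics; (ii) strong pseudoconvexity via convexity/plurisubharmonicity of the squared Schatten $2k$-norm added to a positive-definite Hermitian form, where the paper (Proposition \ref{prop-4-3}) computes the Levi form explicitly and controls the negative cross term by a Cauchy--Schwarz argument on traces --- your route is shorter, the paper's yields the explicit lower bound $\frac{1}{1+t}\{\mbox{tr}(W\overline{W'})+t\mathcal{A}^{\frac1k-1}\mbox{tr}(W\overline{W'}(V\overline{V'})^{k-1})\}$ used nowhere else, so nothing is lost; (iii) the $U(1)$-weight argument for $\varGamma_{;i}^l(0;v)=0$ (invariance under $z\mapsto e^{i\theta}z$ plus $(1,0)$-homogeneity of $\varGamma_{;i}^l$ in $v$ forces it to vanish), where the paper differentiates $\mathfrak{B}_l$ directly. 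Your observation that the two conjugate terms of the type-$I$ bisectional curvature merge for skew-symmetric data, producing the factor $2l\mathcal{B}_{l,1}$, is exactly what happens in the paper's proof of Theorem \ref{C-FIII}, and your eigenvalue-pairing derivation of the constants $2+t\sqrt[k]{2}$ and $2[\frac q2]+t\sqrt[k]{2[\frac q2]}$ matches Theorems \ref{thm-4.4} and \ref{thm4}.

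There is one concrete gap: you describe $\mbox{Iso}(\mathfrak{R}_{III})$ at the origin as acting only by $Z\mapsto UZU'$ with $U\in U(q)$, and you propose to verify invariance only for automorphisms of the explicit fractional-linear form. For $q=4$ (which the theorem covers, since $\mathfrak{R}_{III}(q)$ is defined for $q\ge4$) this is false: as recalled in Remark \ref{rm-III}, there is an exceptional isotropy element $Z\mapsto\widetilde Z$ that cannot be written in the form \eqref{p-III-a}. Since Theorem \ref{hfm} requires $f_{III}$ to be invariant under the \emph{full} isotropy group for $F_{III}(Z_0;V):=f_{III}((\varPhi_{Z_0})_\ast V)$ to be well defined and $\mbox{Aut}(\mathfrak{R}_{III})$-invariant, this case must be addressed. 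The repair is short and is the one the paper uses in Proposition \ref{p-III}: $\widetilde V\overline{\widetilde V}$ and $V\overline{V}$ have the same eigenvalues (Klingen), and $f_{III}$ is a symmetric function of those eigenvalues, so $f_{III}(\widetilde V)=f_{III}(V)$; but as written your argument does not cover it. A second, milder imprecision: the paper does not establish as a ``general fact'' that every $\mbox{Aut}(\mathfrak{R}_{III})$-invariant strongly pseudoconvex metric is K\"ahler--Berwald; Theorem \ref{th-2.1} is a criterion whose hypothesis \eqref{Gz} must be checked --- your $U(1)$-weight argument does check it, so you should rely on that rather than on the purported general fact.
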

	
	For the proof of Theorem \ref{th13}, we refer to Theorem \ref{KB-FIII}, Theorem \ref{C-FIII}, Theorem \ref{thm-4.4} and Theorem \ref{thm4}.
	
	\begin{remark}
		For sufficiently small $t>0$, $F_A$ is a strongly convex K\"ahler-Berwald metric on $\mathfrak{R}_A$ for $A=I,II$ and $III$, respectively.
	\end{remark}

	The construction of $\mbox{Aut}(\mathfrak{R}_{IV})$ -invariant strongly pseudoconvex complex Finsler metrics is a little different from that of $\mathfrak{R}_I,\mathfrak{R}_{II}$ and $\mathfrak{R}_{III}$.
	
	Define
	$$
	f_{IV}(\xi):=\sqrt{\pmb{r}\phi(\pmb{s})},\quad\pmb{r}:=\xi\overline{\xi'},\quad \pmb{s}:=\frac{|\xi\xi'|^2}{\pmb{r}^2}\quad \mbox{with}\quad  0\neq \xi=(\xi_1,\cdots,\xi_N)\in\mathbb{C}^N,
	$$
	where $\phi:[0,1]\rightarrow (0,+\infty)$ is a smooth function of $\pmb{s}\in[0,1]$ satisfying \eqref{sn} in Proposition \ref{prop}. Let $\phi_{z_0}\in \mbox{Aut}(\mathfrak{R}_{IV})$ be given by \eqref{p-IV} such that $\phi_{z_0}(z_0)=0$.

	\begin{theorem}\label{th14}
		Let $ \mathfrak{R}_{IV}(N)$ be a classical domain of type $IV$. For any nonzero vector $v\in T_{z_0}^{1,0}\mathfrak{R}_{IV}$ at the point $z_0\in \mathfrak{R}_{IV}$, define
		$$F_{IV}(z_0;v):=f_{IV}\left(\sqrt{2N}(\phi_{z_0})_\ast(v)\right)=\sqrt{\widetilde{\pmb{r}}\phi(\widetilde{\pmb{s}})},$$
		where $\widetilde{\pmb{r}}$ and $\widetilde{\pmb{s}}$ are given by \eqref{tr} and \eqref{ts}, respectively.
		
		Then the following assertions are true:
		
		(1) $F_{IV}(z;v)$ is a complete $\mbox{Aut}(\mathfrak{R}_{IV})$-invariant strongly pseudoconvex K\"ahler-Berwald metric;
		
		(2) For any nonzero vectors $v,w\in T_0^{1,0}\mathfrak{R}_{IV}$,
		the holomorphic sectional curvature $K_{IV}$ and the holomorphic bisectional
		curvature $B_{IV}$ of $F_{IV}$ are given respectively by
		\begin{eqnarray*}
			K_{IV}(0; v)&=&-\frac{2}{N\phi^2(\widetilde{\pmb{s}})}\left\{\phi(\widetilde{\pmb{s}})+(1-\widetilde{\pmb{s}})\left[\phi(\widetilde{\pmb{s}})-2\widetilde{\pmb{s}}\phi'(\widetilde{\pmb{s}})\right]\right\},\\
			B_{IV}(0; v, w)		&=&-\frac{2}{N}\cdot\frac{\phi\left[\widetilde{\pmb{r}}(0;v)\widetilde{\pmb{r}}(0;w)-|vw'|^2\right]+2\widetilde{\pmb{s}}\phi'|vw'|^2+(\phi-2\widetilde{\pmb{s}}\phi')|v\overline{w'}|^2}{F_{IV}^2(0;v)F_{IV}^2(0;w)};
		\end{eqnarray*}
		
		(3) The holomorphic sectional curvature of $F_{IV}$ is always negative, more precisely, it is  bounded from below by a negative constant $-A $ and bounded from above by a negative constant $-B$, where
		\begin{equation*}
			A=\max\limits_{\widetilde{\pmb{s}}\in[0,1]}\left\{\frac{2\left\{\phi(\widetilde{\pmb{s}})+(1-\widetilde{\pmb{s}})\left[\phi(\widetilde{\pmb{s}})-2\widetilde{\pmb{s}}\phi'(\widetilde{\pmb{s}})\right]\right\}}{N\phi^2(\widetilde{\pmb{s}})}\right\}
			\label{b-A}
		\end{equation*}
		and
		\begin{equation*} B=\min\limits_{\widetilde{\pmb{s}}\in[0,1]}\left\{\frac{2\left\{\phi(\widetilde{\pmb{s}})+(1-\widetilde{\pmb{s}})\left[\phi(\widetilde{\pmb{s}})-2\widetilde{\pmb{s}}\phi'(\widetilde{\pmb{s}})\right]\right\}}{N\phi^2(\widetilde{\pmb{s}})}\right\};
			\label{b-B}
		\end{equation*}
		
		(4) If $\phi'\geq0$,  then the holomorphic bisectional curvature of  $F_{IV}$ is nonpositive and it is bounded from below by a negative constant $-C$ with
		\begin{equation*}
			C=\max\limits_{\widetilde{\pmb{s}}\in[0,1]}\left\{\frac{2}{N}\cdot\frac{\phi\left[\widetilde{\pmb{r}}(0;v)\widetilde{\pmb{r}}(0;w)-|vw'|^2\right]+2\widetilde{\pmb{s}}\phi'|vw'|^2+(\phi-2\widetilde{\pmb{s}}\phi')|v\overline{w'}|^2}{F_{IV}^2(0;v)F_{IV}^2(0;w)}\right\}.\label{b-C}
		\end{equation*}
		
	\end{theorem}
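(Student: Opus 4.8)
Since $\mathrm{Aut}(\mathfrak{R}_{IV})$ acts transitively on $\mathfrak{R}_{IV}$ and $F_{IV}$ is, by construction, $\mathrm{Aut}(\mathfrak{R}_{IV})$-invariant, I would reduce all four assertions to statements at the base point $0$, in the order (1)$\to$(2)$\to$(3),(4). The first task is to check that $F_{IV}(z_0;v)$ is independent of the choice of $\phi_{z_0}$ with $\phi_{z_0}(z_0)=0$: two such automorphisms differ by an element of the isotropy subgroup of $\mathrm{Aut}(\mathfrak{R}_{IV})$ at $0$, whose linear isotropy action on $T^{1,0}_0\mathfrak{R}_{IV}\cong\mathbb{C}^N$ has the form $\xi\mapsto\lambda\Gamma\xi$ with $|\lambda|=1$ and $\Gamma$ a real orthogonal matrix (a classical fact for type $IV$ domains). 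Since $\pmb r=\xi\overline{\xi'}$ and $\pmb s=|\xi\xi'|^2/\pmb r^2$ are invariant under such maps (using $\Gamma\Gamma'=I$ and $\overline\Gamma=\Gamma$), so is $f_{IV}$; hence $F_{IV}$ is well defined, and $F_{IV}(\gamma(z_0);\gamma_\ast v)=F_{IV}(z_0;v)$ for $\gamma\in\mathrm{Aut}(\mathfrak{R}_{IV})$ because $\phi_{\gamma(z_0)}\circ\gamma$ is again an automorphism carrying $z_0$ to $0$. Smoothness of $F_{IV}$ off the zero section and absolute homogeneity in $v$ follow from the explicit formula (in particular from smoothness of $z\mapsto\phi_z$, cf. \eqref{p-IV}) together with the corresponding properties of $f_{IV}$ on $\mathbb{C}^N\setminus\{0\}$. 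For strong pseudoconvexity it suffices, by invariance and the chain rule — $\mathrm{d}\phi_{z_0}$ being an invertible $\mathbb{C}$-linear map, the complex Hessian of $F_{IV}^2$ in $v$ transforms by a congruence and positive-definiteness is preserved — to verify it at $0$, where $F_{IV}^2(0;v)=f_{IV}^2(\sqrt{2N}\,v)=\widetilde{\pmb r}(0;v)\,\phi(\widetilde{\pmb s}(0;v))$ is exactly the Minkowski norm squared handled by Proposition \ref{prop}, whose hypotheses \eqref{sn} on $\phi$ are tailored to make $f_{IV}^2$ smooth and strongly pseudoconvex.

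\noindent\textbf{K\"ahler--Berwald and completeness.} This is the core of (1). The Chern--Finsler connection of $F_{IV}$ is canonically associated to $F_{IV}$, hence $\mathrm{Aut}(\mathfrak{R}_{IV})$-invariant. I would compute its connection coefficients near $0$ from $G:=F_{IV}^2$, using the explicit automorphisms to write $F_{IV}(z;v)$ for $z$ near $0$ and differentiating; the decisive point is that, because $\phi(\pmb s)$ is invariant under the linear isotropy and $\mathfrak{R}_{IV}$ is Hermitian symmetric, the fibre variable $v$ drops out of the coefficients, which then coincide with the Christoffel symbols of the Bergman metric of $\mathfrak{R}_{IV}$. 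Equivalently, one invokes the general fact that on a bounded symmetric domain an $\mathrm{Aut}$-invariant complex Finsler metric whose restriction to $T_0$ is a Minkowski norm invariant under the linear isotropy is automatically complex Berwald, with Berwald connection the invariant Hermitian connection. Thus $F_{IV}$ is complex Berwald with Berwald connection the Levi--Civita ($=$ Chern) connection of the Bergman metric; the latter being K\"ahler, $F_{IV}$ is a K\"ahler--Berwald metric. Completeness follows by comparison with the Bergman metric $F_B$: the ratio $F_{IV}/F_B$ is $\mathrm{Aut}$-invariant and $0$-homogeneous in $v$, hence descends to a continuous function on the compact projectivized tangent space at $0$; as $\phi$ is continuous and positive on $[0,1]$, this ratio is pinched between two positive constants, so $c_1F_B\le F_{IV}\le c_2F_B$ on all of $\mathfrak{R}_{IV}$, and completeness of the Bergman metric of $\mathfrak{R}_{IV}$ yields completeness of $F_{IV}$.

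\noindent\textbf{Curvature formulas and their bounds.} By invariance, $K_{IV}$ and $B_{IV}$ are determined by their values at $0$; there, the Berwald structure makes the curvature of the Chern--Finsler connection equal to the pullback of the curvature of the fibre-independent Berwald connection, so $K_{IV}(0;v)$ and $B_{IV}(0;v,w)$ are obtained by contracting the known curvature tensor of $\mathfrak{R}_{IV}$ with $v$ (and $w$), weighted by the fundamental tensor $G_{i\bar j}(0;v)$ of $F_{IV}$ and divided by the appropriate powers of $F_{IV}$. Inserting $F_{IV}^2(0;v)=2N(v\overline{v'})\phi(\widetilde{\pmb s}(0;v))$, computing $G_{i\bar j}(0;v)$ and carrying out the contraction — organizing the computation by the $O(N)$-isotropy to place $v$ (then $w$) in normal form and splitting off the case $\widetilde{\pmb s}=0$ — produces the stated formulas, with all $\phi''$-terms cancelling in the final contraction so that only $\phi$ and $\phi'$ survive. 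Assertions (3) and (4) are then elementary. For (3), $K_{IV}(0;v)=-g(\widetilde{\pmb s})$ with $g(s)=\frac{2\{\phi(s)+(1-s)[\phi(s)-2s\phi'(s)]\}}{N\phi^2(s)}$, and each bracket is positive on $[0,1]$ by \eqref{sn} (in particular $\phi>0$ and $\phi-2s\phi'>0$), so $g$ is positive and continuous on the compact $[0,1]$ and attains a positive maximum $A$ and positive minimum $B$; by invariance these bound $K_{IV}$ everywhere. For (4) with $\phi'\ge0$, the numerator of $B_{IV}(0;v,w)$ splits into three nonnegative terms: $\widetilde{\pmb r}(0;v)\widetilde{\pmb r}(0;w)-|vw'|^2\ge0$ by Cauchy--Schwarz (as $\widetilde{\pmb r}(0;\cdot)$ is a positive multiple of $\|\cdot\|^2$), $2\widetilde{\pmb s}\phi'|vw'|^2\ge0$, and $(\phi-2\widetilde{\pmb s}\phi')|v\overline{w'}|^2\ge0$ by \eqref{sn}; hence $B_{IV}\le0$. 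Bounding the numerator above by a constant (continuous in $\widetilde{\pmb s}$, hence bounded on $[0,1]$) times $\widetilde{\pmb r}(0;v)\widetilde{\pmb r}(0;w)$ and the denominator below using $\phi\ge\min_{[0,1]}\phi>0$ shows $B_{IV}\ge-C$, with $C$ the indicated maximum.

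\noindent\textbf{Main obstacle.} The genuinely delicate step is the Berwald verification in (1): showing that the Chern--Finsler connection coefficients of $F_{IV}$ are fibre-independent, which is precisely where the interaction between the deformation function $\phi$ and the Hermitian-symmetric structure of $\mathfrak{R}_{IV}$ must be exploited. The invariance arguments in (1), the comparison proof of completeness, and the estimates in (3)--(4) are routine, and the curvature computation in (2), although lengthy, is mechanical once the Berwald property is available.
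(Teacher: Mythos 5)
Your proposal is correct and follows essentially the same skeleton as the paper: reduce everything to the origin by $\mbox{Aut}(\mathfrak{R}_{IV})$-invariance, get strong pseudoconvexity from the Minkowski-norm condition \eqref{sn} on $\phi$ (Proposition \ref{prop}), get the K\"ahler--Berwald property from the vanishing of the mixed derivatives $\partial^2F_{IV}^2/\partial z_i\partial\overline{v_a}$ at $(0;v)$ together with the transformation law for the nonlinear connection (Theorem \ref{th-2.1}), compute the curvatures at the origin, and obtain the bounds from positivity of $\phi$ and $\phi-2\widetilde{\pmb{s}}\phi'$ plus compactness of $[0,1]$. Two sub-steps differ in route, both validly. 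For completeness the paper argues that $F_{IV}$ shares its geodesics (as point sets) with the Bergman metric because the two metrics have identical connection coefficients (Remark \ref{RK1}), whereas you use the two-sided comparison $c_1F_B\le F_{IV}\le c_2F_B$ coming from boundedness of $\phi$ on $[0,1]$; your argument is more elementary and does not need the geodesic identification, while the paper's gives the stronger geometric statement about geodesics. For the curvature you propose contracting the Bergman curvature tensor against the fundamental tensor $G_{i\bar j}(0;v)$ of $F_{IV}$, while the paper uses the reduced formula $K(0;v)=-\tfrac{2}{F^4}\sum G_{;i\bar j}(0;v)v_i\overline{v_j}$ of Proposition \ref{hsc} and computes $\widetilde{\pmb{r}}_{;i\bar j}(0;v)$, $\widetilde{\pmb{s}}_{;i\bar j}(0;v)$ directly; these are equivalent once the Berwald property and $\varGamma_{;i}^l(0;v)=0$ are in hand. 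One remark on the step you flag as the "main obstacle": the fibre-independence of the connection coefficients is not where $\phi$ interacts with the symmetric structure in any delicate way — the vanishing $\partial^2F_{IV}^2/\partial z_i\partial\overline{v_a}(0;v)=0$ already follows from invariance under the circle factor $z\mapsto e^{i\theta}z$ of $\mbox{Iso}(\mathfrak{R}_{IV})$ combined with the $(1,1)$-homogeneity of $F_{IV}^2$ in $v$ (or, as in the paper, from the explicit first-order vanishing \eqref{rsz} of $\widetilde{\pmb{r}}$ and $\widetilde{\pmb{s}}$ in $z$ at the origin), so your appeal to the "general fact" is justified and this step is in fact routine.
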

	
	\begin{remark} First, we note that if $\mathfrak{R}_A$ reduces to the open unit disc in $\mathbb{C}$, then $F_A$ reduces to the well-known Poincar\'e metric on the open unit disc for $A=I,II,III,IV$ respectively.
		Second, for $t=0$, $F_I, F_{II}$ and $F_{III}$ reduce to the Bergman metrics on the classical domains of type $I,II$ and $III$ respectively,
		which are complete K\"ahler metrics, hence Hermitian quadratic with respect to fiber coordinates. For any fixed $t\in(0,+\infty), k\in\mathbb{N}$ and $k\geq 2$,
		however, $F_I, F_{II}$ and  $F_{III}$ are holomorphic invariant complete K\"ahler-Berwald metrics which are not Hermitian quadratic with respect to fiber coordinates.
		For $\phi(\widetilde{\pmb{s}})\equiv 1$,
		$F_{IV}$ reduces to the Bergman metric on $\mathfrak{R}_{IV}$, which is also a complete K\"ahler metric, hence a Hermitian quadratic metric. There are infinite many $\phi(\widetilde{\pmb{s}})$ such that $F_{IV}=\sqrt{\widetilde{\pmb{r}}\phi(\widetilde{\pmb{s}})}$ are $\mbox{Aut}(\mathfrak{R}_{IV})$-invariant complete K\"ahler-Berwald metrics and non-Hermitian quadratic. If we take $\phi(\widetilde{\pmb{s}})=1+\sqrt{1+\widetilde{\pmb{s}}}$, then the corresponding metric $F_{IV}=\sqrt{\widetilde{\pmb{r}}\phi(\widetilde{\pmb{s}})}$ satisfies assertion (4) in Theorem \ref{th14}.
	\end{remark}

	\begin{remark}
		Let $\mathfrak{R}_A$ be any one of the irreducible classical domains of type $A=I, II, III, IV$.
		Then any $\mbox{Aut}(\mathfrak{R}_A)$-invariant Hermitian metric is a constant multiple of the corresponding Bergman metric on
		$\mathfrak{R}_A$ for $A=I, II, III, IV$, respectively. This is an immediate consequence of Theorem \ref{th11}, Theorem \ref{th12}, Theorem \ref{th13} and Theorem \ref{th14}.
		Thus from the viewpoint of several complex variables, the differential geometry of holomorphic invariant Hermitian quadratic metrics is unique on $\mathfrak{R}_A$, that is the differential geometry of the corresponding  Bergman metric on $\mathfrak{R}_A$ (\cite{Look1,Look2,Look3}). In the $\mbox{Aut}(\mathfrak{R}_A)$-invariant non-Hermitian quadratic case,  although there are infinitely many holomorphic invariant
		non-Hermitian quadratic metrics on $\mathfrak{R}_A$, Theorem \ref{th11}-Theorem \ref{th12} and Theorem \ref{th14} tell us  that these metrics enjoy very similar curvature properties as that of the corresponding Bergman metric on $\mathfrak{R}_A$. Both of them contribute to the geometry of holomorphic invariant strongly pseudoconvex complex Finsler metrics in the viewpoint of S.-S. Chern \cite{Ch}.
	\end{remark}

	\section{Complex Finsler metrics on domains in $\mathbb{C}^n$}\label{section-2}
	
	In this section, we recall some notions of complex Finsler geometry. We restrict our attention to domains $D\subset \mathbb{C}^n$ with $n\geq 2$ since in the case $n=1$ any strongly pseudoconvex complex Finsler metric is necessary a Hermitian metric. We refer to \cite{AP}, \cite{BCS},  \cite{Ch} for an excellent and comprehensive introduction to general manifold cases.
	
	Let $D$ be a domain in $\mathbb{C}^n$ with the canonical complex coordinates $z=(z_1,\cdots,z_n)$. A (complex) tangent vector $\pmb{v}$ at $z_0\in D$ is expressed as
	$$\pmb{v}=\sum_{i=1}^nv_i\frac{\partial}{\partial z_i}\Big|_{z_0},$$
	which can be identified with a point $(z_0;v)\in D\times \mathbb{C}^n$. So that the holomorphic tangent bundle
	$T^{1,0}D$ of $D$ is isomorphic to $D\times \mathbb{C}^n$, which is a trivial holomorphic vector bundle over $D$. Therefore we may use $(z;v)=(z_1,\cdots,z_n; v_1,\cdots,v_n)$ as global complex coordinates to denote points in $T^{1,0}D$.
	
	A (smooth) complex Finsler metric on $D$ is a continuous function $F:T^{1,0}D\rightarrow [0,+\infty)$ such that $G:=F^2$ is $C^\infty$ on
	$\widetilde{T^{1,0}D}:=T^{1,0}D\setminus\{\mbox{zero section}\}$ and satisfies
	\begin{equation*}
		F(z;\lambda v)=|\lambda| F(z;v)
	\end{equation*}
	for any $\lambda\in\mathbb{C}$ and $(z;v)\in T^{1,0}D$.
	
	If furthermore for each fixed point $z_0\in D$, the indicatrix $I_{z_0}:=\{v\in T_{z_0}^{1,0}D:F(z_0;v)<1\}$ is a strongly pseudoconvex domain in $T_{z_0}^{1,0}D\cong \mathbb{C}^n$, or equivalently
	\begin{equation}
		(G_{i\overline{j}}):=\left(\frac{\partial^2G}{\partial v_i\partial\overline{v_j}}\right)\label{Gab}
	\end{equation}
	is positive definite at any points $(z;v)\in \widetilde{T^{1,0}D}$, then we say that $F$ is a strongly pseudoconvex complex Finsler metric on $D$.
	
	Indeed, for each fixed point $z_0\in D$, a strongly pseudoconvex complex Finsler metric $F$ on $D$ induces  a strongly pseudoconvex complex norm $f_{z_0}(v):=F(z_0;v)$ on $T_{z_0}^{1,0}D\cong\mathbb{C}^n$  (also called complex Minkowski norm in \cite{AP},\cite{A2}). If $f_{z_0}(v)$ is the same  for any $z_0\in D$, then $F$ is called a complex Minkowski metric on $D$.
	
	\begin{remark}For a strongly pseudoconvex complex Finsler metric $F$, if the components $\frac{\partial^2F^2}{\partial v_i\partial\overline{v_j}}$ are independent of
		$v$ for any $i,j=1,\cdots,n$, namely $F^2$ is Hermitian quadratic with respect to $v$, then we say that $F$ is Hermitian quadratic, otherwise we say that $F$ is non-Hermitian quadratic. Indeed, a strongly pseudoconvex complex Finsler metric $F$ on $D$
		is $C^\infty$ Hermitian quadratic if and only
		if $F^2$ is $C^\infty$ on the whole $T^{1,0}D$ (\cite{AP}) and  $F$ is non-Hermitian quadratic if and only if $F^2$ is only $C^\infty$ on $\widetilde{T^{1,0}D}$.
	\end{remark}

	Denote $(G^{\overline{j}l})$ the inverse matrix of $(G_{i\overline{j}})$ such that
	$$
	\sum_{j=1}^nG_{i\overline{j}}G^{\overline{j}l}=\delta_{i}^l=\left\{
	\begin{array}{ll}
		1, & i=l \\
		0, & i\neq l.
	\end{array}
	\right.
	$$
	Then the Chern-Finsler nonlinear connection coefficients $\varGamma_{;i}^l$ associated to $F$ are given by (cf. \cite{AP})
	\begin{equation}
		\varGamma_{;i}^l=\sum_{s=1}^nG^{\overline{s}l}\frac{\partial^2G}{\partial z_i\partial\overline{v_s}}.\label{CN}
	\end{equation}
	
	Let $f\in \mbox{Aut}(D)$. We denote $w=f(z)=(w_1(z),\cdots,w_n(z))$ for $z\in D$ and
	\begin{equation}
		\left(\frac{\partial w_j}{\partial z_i}\right)=\begin{pmatrix}
			\frac{\partial w_1}{\partial z_1} & \cdots & \frac{\partial w_n}{\partial z_1} \\
			\vdots & \vdots & \vdots \\
			\frac{\partial w_1}{\partial z_n} & \cdots & \frac{\partial w_n}{\partial z_n} \\
		\end{pmatrix}.
	\end{equation}
	the Jacobian matrix of $f$ at $z$, which is non-degenerated and holomorphic with respect to $z\in D$. For $v\in T_z^{1,0}D$, denote $\xi:=f_\ast(v)\in T_{f(z)}^{1,0}D$, that is,
	$$\xi_j=\sum_{i=1}^n\frac{\partial w_j}{\partial z_i}v_i,\quad j=1,\cdots,n.$$
	Then
	$z=(z_1,\cdots,z_n)$ and $w=(w_1,\cdots,w_n)$ can be used as complex coordinates on $D$, and both $(z;v)=(z_1,\cdots,z_n; v_1,\cdots,v_n)$ and $(w;\xi)=(w_1,\cdots,w_n; \xi_1,\cdots,\xi_n)$
	can be used
	as complex coordinates on $T^{1,0}D$.
	By \eqref{Gab} and \eqref{CN}, it follows that the behavior of the Christoffel symbols $\varGamma_{;i}^l$ under change of complex coordinates $(z;v)\rightarrow (w;\xi)$ on $T^{1,0}D$ is
	\begin{equation}
		\tilde{\varGamma}_{;i}^l(w;\xi)=\sum_{a,b=1}^n\frac{\partial w_l}{\partial z_a}\varGamma_{;b}^a(z;v)\frac{\partial z_b}{\partial w_i}-\sum_{a,b=1}^n
		\frac{\partial^2w_l}{\partial z_a\partial z_b}\frac{\partial z_a}{\partial w_i} v_b,\label{Nwz}
	\end{equation}
	where we denote $\tilde{\varGamma}_{;i}^l(w;\xi)$ and $\varGamma_{;b}^a(z;v)$ the local expression of the Chern-Finsler nonlinear coefficients with respect to the complex coordinates $(w;\xi)$ and $(z;v)$, respectively.
	
	The horizontal Chern-Finsler connection coefficients $\varGamma_{j;i}^l$ associated to $F$ are given by
	\begin{equation}
		\varGamma_{j;i}^l=\frac{\partial}{\partial v_j}\varGamma_{;i}^l,\label{hccf}
	\end{equation}
	which satisfy
	\begin{equation}
		\sum_{j=1}^n\varGamma_{j;i}^l v_j=\varGamma_{;i}^l.\label{CCP}
	\end{equation}
	
	Using the homogeneity of $F$, it is easy to check that
	\begin{equation}
		\varGamma_{;i}^l(z;\lambda v)=\lambda\varGamma_{;i}^l(z;v),\quad \varGamma_{j;i}^l(z;\lambda v)=\varGamma_{j;i}^l(z;v),\quad \forall \lambda\in\mathbb{C}\setminus\{0\}.
	\end{equation}
	$F$ is called a K\"ahler-Finsler metric if the horizontal Chern-Finsler connection coefficients $\varGamma_{j;i}^l$ associated to $F$ are symmetric with respect to the lower indexes, i.e.,
	\begin{equation}
		\varGamma_{j;i}^l=\varGamma_{i;j}^l,
	\end{equation}
	which is equivalent to the condition $\varGamma_{;i}^l=\displaystyle\sum_{j=1}^n\varGamma_{i;j}^l v_j$ (\cite{CS}); $F$ is called a complex Berwald metric if
	$\varGamma_{j;i}^l$ are locally independent of $v$ (cf. \cite{A1},\cite{A2});  $F$ is called a K\"ahler-Berwald metric if $F$ is both a K\"ahler-Finsler metric and a complex Berwald metric.  It is easy to check that these definitions are independent of the choice of local complex coordinates, hence they make sense on any complex manifolds. Note that K\"ahler-Berwald manifolds are very natural objects studied in complex Finsler geometry (cf. \cite{Zh0}, \cite{XZ0}, \cite{XiaZhang},\cite{Zh2}).
	
	Complex Berwald manifolds $(D,F)$ play an important role in complex Finsler geometry because associated to a general strongly pseudoconvex complex Finsler metric $F$ we can only get a complex non-linear connection \cite{AP}, while associated to a complex Berwald metric there is always a complex linear connection $\nabla$ on $T^{1,0}D$. If we denote $\xi_t$ the parallel displacement of $\xi\in T_p^{1,0}D$ along a smooth curve $c(t)$ passing through $p$ with respect to $\nabla$, then the $F$-length $F(c(t);\xi_t)$ of $\xi_t$ is invariant under parallel displacement, so that parallel displacement gives a complex linear isomorphism between different tangent spaces \cite{A2}. Thus a complex Berwald manifold is also called a complex manifold modeled a complex Minkowski space \cite{A2}.

	Let $F$ be a strongly pseudoconvex complex Finsler metric on a domain $D$. Then the holomorphic (sectional) curvature $K_F$ of $F$ along $(z;v)\in\widetilde{T^{1,0}D}$ is defined by \cite{AP}
	\begin{equation}
		K(z;v)=\frac{2}{G^2(z;v)}\langle \Omega(\chi,\overline{\chi})\chi,\chi\rangle_{(z;v)},\label{hsca}
	\end{equation}
	where $\Omega$ is the curvature operator of the Chern-Finsler connection associated to $F$, and
	$$\chi=\sum_{i=1}^nv_i\left(\frac{\partial}{\partial z_i}-\sum_{l=1}^n\varGamma_{;i}^l(z;v)\frac{\partial}{\partial v_l}\right)$$
	is the complex horizontal radial vector field on $T^{1,0}D$.

	Let $w\in T_z^{1,0}D$. We denote
	$$\chi_v(w)=\sum_{i=1}^nw_i\left(\frac{\partial}{\partial z_i}-\sum_{l=1}^n\varGamma_{;i}^l(z;v)\frac{\partial}{\partial v_l}\right)$$
	the horizontal lift of $w$ along $v$.
	Note that $\chi=\chi_v(v)$. The holomorphic bisectional curvature of $F$ with respect to $v,w\in T_z^{1,0}D$ is defined as (cf. \cite{SZ1}, \cite{WZ})
	\begin{equation}
		B(z;v,w)=\frac{\left\langle \Omega\left(\chi_v(w),\overline{\chi_v(w)}\right)\chi,\chi\right\rangle}{G(z;v)G(z;w)}.\label{hbsca}
	\end{equation}
	\begin{remark}
		It is easy to check that \eqref{hsca} and \eqref{hbsca} are $\mbox{Aut}(D)$-invariant quantities, namely for any $\phi\in\mbox{Aut}(D)$, $z\in D$ and $0\neq v,w\in T_z^{1,0}D$,
		\begin{eqnarray*}
			K(\phi(z);\phi_\ast(v))&=&K(z;v),\\
			B(\phi(z);\phi_\ast(v),\phi_\ast(w))&=&B(z;v,w),
		\end{eqnarray*}
		where $\phi_\ast$ denotes the differential of $\phi$ at $z$.
	\end{remark}

	\subsection{Complex Finsler metrics on homogeneous domains in $\mathbb{C}^n$}\label{subsection-2.1}
	
	In this section, we assume that $D$ is a homogeneous domain in $\mathbb{C}^n$ which contains the origin, namely the holomorphic automorphism $\mbox{Aut}(D)$ acts
	transitively on $D$. We denote $\mbox{Iso}(D)$ the isotropy subgroup of $\mbox{Aut}(D)$ at the origin.
	\begin{theorem}\label{hfm}
		Let $D$ be a homogeneous domain in $\mathbb{C}^n$ which contains the origin. Then $F:T^{1,0}D\rightarrow [0,+\infty)$ is an $\mbox{Aut}(D)$-invariant strongly pseudoconvex complex Finsler metric
		if and only if there is a complex Minkowski  norm $f:\mathbb{C}^n\rightarrow [0,+\infty)$ which is $\mbox{Iso}(D)$-invariant such that for any fixed $z_0\in D$ and any $v\in T_{z_0}^{1,0}D$,
		\begin{equation}
			F(z_0;v):=f(\phi_{\ast}(v)),\label{fz0v}
		\end{equation}
		where $\phi\in \mbox{Aut}(D)$ satisfying $\phi(z_0)=0$ and $\phi_\ast$ denotes the differential of $\phi$ at $z_0$.
	\end{theorem}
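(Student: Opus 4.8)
The plan is to establish the two implications separately; the ``only if'' direction is essentially bookkeeping with the invariance identity, while the ``if'' direction needs in addition a well-definedness check and one genuinely analytic point about regularity. For the forward direction I would start from an $\mbox{Aut}(D)$-invariant strongly pseudoconvex complex Finsler metric $F$ and set $f(v):=F(0;v)$ on $T_0^{1,0}D\cong\mathbb{C}^n$. Then $f$ is a complex Minkowski norm: continuity of $f$, smoothness of $f^2$ on $\mathbb{C}^n\setminus\{0\}$, absolute homogeneity $f(\lambda v)=|\lambda|f(v)$, and positive-definiteness of $\big(\partial^2 f^2/\partial v_i\partial\overline{v_j}\big)$ off the origin are obtained by restricting the corresponding properties of $F$ to the single fiber over the origin. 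Its $\mbox{Iso}(D)$-invariance is immediate: for $\psi\in\mbox{Iso}(D)$ one has $\psi(0)=0$, so $f(\psi_\ast v)=F(\psi(0);\psi_\ast v)=F(0;v)=f(v)$ by invariance of $F$. Finally, for any $z_0\in D$, homogeneity of $D$ provides $\phi\in\mbox{Aut}(D)$ with $\phi(z_0)=0$, and invariance of $F$ gives $F(z_0;v)=F(\phi(z_0);\phi_\ast v)=f(\phi_\ast v)$, which is exactly \eqref{fz0v}.

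For the converse, given an $\mbox{Iso}(D)$-invariant complex Minkowski norm $f$ and $F$ defined by \eqref{fz0v}, I would first check that the right-hand side does not depend on the choice of $\phi$: if $\phi,\phi'\in\mbox{Aut}(D)$ both send $z_0$ to $0$ then $\psi:=\phi'\circ\phi^{-1}$ fixes $0$, so $\psi\in\mbox{Iso}(D)$, and the chain rule gives $\phi'_\ast=\psi_\ast\circ\phi_\ast$ at $z_0$, whence $f(\phi'_\ast v)=f(\psi_\ast\phi_\ast v)=f(\phi_\ast v)$. Granting well-definedness, absolute homogeneity of $F$ in $v$ follows from linearity of $\phi_\ast$ and absolute homogeneity of $f$; strong pseudoconvexity follows because at a fixed $z_0$ the function $v\mapsto F^2(z_0;v)=f^2(\phi_\ast v)$ is $f^2$ precomposed with the fixed invertible linear map $\phi_\ast$, so its $v$-Hessian $\big(\partial^2 F^2/\partial v_i\partial\overline{v_j}\big)(z_0;v)$ is the pull-back under $\phi_\ast$ of the positive-definite Hessian of $f^2$ at the nonzero vector $\phi_\ast v$, hence positive definite; and $\mbox{Aut}(D)$-invariance follows by the same chain-rule trick, replacing $\phi$ by $\phi\circ g^{-1}$ for $g\in\mbox{Aut}(D)$: since $\phi\circ g^{-1}$ sends $g(z_0)$ to $0$ and $(\phi\circ g^{-1})_\ast\circ g_\ast=\phi_\ast$ at $z_0$, one gets $F(g(z_0);g_\ast v)=f\big((\phi\circ g^{-1})_\ast g_\ast v\big)=f(\phi_\ast v)=F(z_0;v)$.

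The hard part — the only step that is not formal — will be proving that $F$ given by \eqref{fz0v} is continuous on $T^{1,0}D$ and that $F^2$ is $C^\infty$ on $\widetilde{T^{1,0}D}$, since a priori the choice of $\phi$ depends on $z$. Here I would invoke that $\mbox{Aut}(D)$ is a (real) Lie group acting smoothly and transitively on $D$, so the orbit map is a submersion and admits local smooth sections; thus around any point of $D$ there is a neighborhood $U$ and a smooth family $U\ni z\mapsto\phi_z\in\mbox{Aut}(D)$ with $\phi_z(z)=0$. By the well-definedness already proved, $F(z;v)=f\big((\phi_z)_\ast v\big)$ on $T^{1,0}U$, the map $(z,v)\mapsto(\phi_z)_\ast v$ is smooth, and $(\phi_z)_\ast$ is a linear isomorphism of $\mathbb{C}^n$ so $(\phi_z)_\ast v\neq 0$ whenever $v\neq 0$; hence smoothness of $f^2$ off the origin (respectively continuity of $f$) transfers to $F^2$ on $\widetilde{T^{1,0}U}$ (respectively to $F$ on $T^{1,0}U$), and covering $D$ by such $U$ finishes the argument.
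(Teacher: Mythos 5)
Your proposal is correct and follows essentially the same route as the paper: define $f(v)=F(0;v)$ for the forward direction, and for the converse use the chain rule together with the observation that a composition of automorphisms fixing the origin lies in $\mbox{Iso}(D)$ to get both well-definedness and $\mbox{Aut}(D)$-invariance. Your final paragraph on the joint regularity of $F$ in $(z,v)$ via local smooth sections of the orbit map is a point the paper's proof silently omits (it simply asserts that \eqref{fz0v} defines a strongly pseudoconvex complex Finsler metric), so your treatment is, if anything, more complete than the original.
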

	\begin{proof} Suppose that $F:T^{1,0}D\rightarrow [0,+\infty)$ is an $\mbox{Aut}(D)$-invariant strongly pseudoconvex complex Finsler metric. Define
		$$
		f(v):=F(0;v),\quad \forall v\in \mathbb{C}^n\cong T_{0}^{1,0}D.
		$$
		Then $f:\mathbb{C}^n\rightarrow [0,+\infty)$ is clear a complex Minkowski norm, and for any $g\in\mbox{Iso}(D)$,
		we have
		$$f(g_\ast(v))=F(g(0);g_\ast(v))=F(0;v)=f(v),$$
		that is, $f$ is $\mbox{Iso}(D)$-invariant.
		Now for any fixed point $z_0\in D$ there exists a $\phi\in\mbox{Aut}(D)$ such that $\phi(z_0)=0$ since $D$ is homogeneous. Thus for any $v\in T_{z_0}^{1,0}D$ we have
		$$
		F(z_0;v)=F(\phi(z_0);\phi_\ast(v))=F(0;\phi_\ast(v))=f(\phi_\ast(v)).
		$$
		
		Conversely, suppose that $f:\mathbb{C}^n\rightarrow [0,+\infty)$ is a complex Minkowski norm which is
		$\mbox{Iso}(D)$-invariant such that \eqref{fz0v} holds for any fixed $z_0\in D$ and for any $v\in T_{z_0}^{1,0}D$. Then \eqref{fz0v} actually defines an $\mbox{Aut}(D)$-invariant strongly pseudoconvex complex Finsler metric $F:T^{1,0}D\rightarrow [0,+\infty)$. Indeed, for a $g\in\mbox{Aut}(D)$, we set $w_0:=g(z_0)\in D$. Since $D$ is homogeneous,
		there exists a $\psi\in \mbox{Aut}(D)$
		such that $\psi(w_0)=0$.  By \eqref{fz0v}, we have
		\begin{eqnarray*}
			F(g(z_0);g_\ast(v))=F(w_0;g_\ast(v))=f(\psi_\ast(g_\ast(v)))=f((\psi\circ g\circ\phi^{-1})_\ast(\phi_\ast(v))),
		\end{eqnarray*}
		where $(\psi\circ g\circ \phi^{-1})_\ast$ denotes the tangent map at the origin.
		Now it is easy to check that $\psi\circ g\circ \phi^{-1}\in\mbox{Iso}(D)$. By the $\mbox{Iso}(D)$-invariance of $f$, we have
		\begin{eqnarray*}
			f((\phi\circ g\circ\phi^{-1})_\ast(\phi_\ast(v)))=f(\phi_\ast(v))=F(z_0;v).
		\end{eqnarray*}
		Therefore
		$$
		F(g(z_0);g_\ast(v))=F(z_0;v),\quad \forall g\in\mbox{Aut}(D),
		$$
		that is, $F$ is $\mbox{Aut}(D)$-invariant. Note that it is also easy to check that the definition $F(z_0;v)$ in  \eqref{fz0v} is independent of the choice of $\phi$ satisfying $\phi(z_0)=0$.
	\end{proof}
	
	The following theorem gives us a criterion for an $\mbox{Aut}(D)$-invariant strongly pseudoconvex complex Finsler metric on a homogeneous domain $D$ to be a K\"ahler-Berwald metric.
	\begin{theorem}\label{th-2.1}
		Let $D$ be a homogeneous domain in $\mathbb{C}^n$ which contains the origin and $F:T^{1,0}D\rightarrow [0,+\infty)$ is an $\mbox{Aut}(D)$-invariant strongly pseudoconvex complex Finsler metric. If
		\begin{equation}
			\frac{\partial^2F^2}{\partial w_i\partial \overline{\xi_j}}\Big|_{(0;\xi)}=0,\quad \forall i,j=1,\cdots,n\label{Gz}
		\end{equation}
		holds for any nonzero tangent vector $\xi\in T_{0}^{1,0}D$, then $F$ is an $\mbox{Aut}(D)$-invariant K\"ahler-Berwald metric.
	\end{theorem}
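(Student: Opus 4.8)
The plan is to exploit the homogeneity of $D$ to reduce everything to the behaviour of $F$ at the origin, and then use the transformation law \eqref{Nwz} for the Chern--Finsler nonlinear connection coefficients. First I would observe that the hypothesis \eqref{Gz} says precisely that, in the canonical coordinates on $T^{1,0}D$ centred at the origin, we have $\partial^2 G/\partial w_i\partial\overline{\xi_j}|_{(0;\xi)}=0$ for all $i,j$ and all nonzero $\xi$. Plugging this into the defining formula \eqref{CN}, namely $\varGamma^l_{;i}=\sum_s G^{\overline{s}l}\,\partial^2 G/\partial z_i\partial\overline{v_s}$, we immediately get that the Chern--Finsler nonlinear connection coefficients vanish at the origin: $\varGamma^l_{;i}(0;\xi)=0$ for every nonzero $\xi$. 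Differentiating \eqref{CN} once more in $v_j$ and using \eqref{hccf}, one sees that the horizontal coefficients $\varGamma^l_{j;i}(0;\xi)$ are also expressible in terms of $\partial^2 G/\partial z_i\partial\overline{v_s}$ and its $v$-derivatives evaluated at $(0;\xi)$; but here one must be slightly careful, since $\partial^2 G/\partial w_i\partial\overline{\xi_j}$ vanishing at $(0;\xi)$ for all nonzero $\xi$ does not by itself kill the mixed third derivative $\partial^3 G/\partial w_i\partial\overline{\xi_j}\partial w_k$ — only the derivatives tangent to the fibre direction are controlled. The cleanest route is therefore the following: show that $\varGamma^l_{;i}$ and $\varGamma^l_{j;i}$ vanish identically (as functions on all of $\widetilde{T^{1,0}D}$), not just at the origin, by transporting the vanishing at $0$ to every point via an automorphism.

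The key step is the transformation rule \eqref{Nwz}. Fix an arbitrary $z_0\in D$ and a nonzero $v\in T_{z_0}^{1,0}D$. By homogeneity choose $\phi\in\mbox{Aut}(D)$ with $\phi(z_0)=0$, and let $w=\phi(z)$, $\xi=\phi_\ast(v)$. Applying \eqref{Nwz} with the roles of $(z;v)$ and $(w;\xi)$ adjusted so that the source chart is the $z$-chart and the target chart is the $w$-chart, and using $\varGamma^a_{;b}(0;\xi)=0$ (just established), the first sum on the right of \eqref{Nwz} drops out, leaving
\[
\varGamma^l_{;i}(z_0;v)=-\sum_{a,b}\frac{\partial^2 w_l}{\partial z_a\partial z_b}\Big|_{z_0}\frac{\partial z_a}{\partial w_i}\Big|_0\, v_b .
\]
This is a purely holomorphic, quadratic-in-$v$ expression — in particular it is holomorphic in $v$ and homogeneous of degree one. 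Now differentiate in $\overline{v_j}$: the right-hand side is annihilated, so $\partial\varGamma^l_{;i}/\partial\overline{v_j}\equiv 0$ at $(z_0;v)$. Since $z_0$ and $v$ were arbitrary, the nonlinear connection coefficients are holomorphic on all of $\widetilde{T^{1,0}D}$. Together with the homogeneity relation $\varGamma^l_{;i}(z;\lambda v)=\lambda\varGamma^l_{;i}(z;v)$, holomorphicity in $v$ forces $\varGamma^l_{;i}$ to be \emph{linear} in $v$; hence the horizontal coefficients $\varGamma^l_{j;i}=\partial\varGamma^l_{;i}/\partial v_j$ are independent of $v$. That is exactly the complex Berwald condition.

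For the Kähler part, I would again use \eqref{Nwz} at an arbitrary $z_0$: with $\varGamma^a_{;b}(0;\xi)=0$ and the formula above, the antisymmetric part of $\varGamma^l_{j;i}$ in $i,j$ is controlled by the antisymmetric part of $\partial^2 w_l/\partial z_a\partial z_b$ in $a,b$, which vanishes because mixed second partial derivatives of the holomorphic map $\phi$ commute. More precisely, $\varGamma^l_{;i}(z_0;v)=\sum_j\varGamma^l_{i;j}(z_0)v_j$ with $\varGamma^l_{i;j}$ symmetric in $i,j$, and by the equivalent characterisation of the Kähler--Finsler condition quoted in the excerpt (that $\varGamma^l_{;i}=\sum_j\varGamma^l_{i;j}v_j$), $F$ is Kähler--Finsler. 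Combining with the Berwald property from the previous paragraph gives that $F$ is a Kähler--Berwald metric, and $\mbox{Aut}(D)$-invariance is part of the hypothesis, so the theorem follows. The main obstacle I anticipate is the bookkeeping in step one — extracting $\varGamma^l_{;i}(0;\xi)=0$ and, in particular, the vanishing of the relevant fibre-derivatives of $\partial^2 G/\partial z_i\partial\overline{v_s}$ at the origin from the single hypothesis \eqref{Gz}; once one commits to proving vanishing of the \emph{connection} globally via \eqref{Nwz} rather than chasing derivatives of $G$, the argument becomes transparent.
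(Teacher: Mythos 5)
Your proposal is correct and follows essentially the same route as the paper: derive $\varGamma^a_{;b}(0;\xi)=0$ from \eqref{Gz} via \eqref{CN}, then use the transformation law \eqref{Nwz} at an arbitrary $z_0$ to express $\varGamma^e_{;c}(z_0;v)$ as a linear-in-$v$ expression whose coefficients $\sum_l\frac{\partial^2 w_l}{\partial z_b\partial z_c}\frac{\partial z_e}{\partial w_l}$ are $v$-independent and symmetric in $b,c$, yielding both the Berwald and K\"ahler conditions. The only blemishes are cosmetic: the expression you obtain is linear (not ``quadratic'') in $v$, and the sign and index placement in your displayed formula need the contraction with $\frac{\partial z_e}{\partial w_l}$ as in the paper's \eqref{nce}.
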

	
	\begin{proof}
		Since $D$ is homogeneous, for any point $z_0\in D$, there exist $f\in \mbox{Aut}(D)$ such that $f(z_0)=0$. Denote $w=f(z)=(w_1(z),\cdots,w_n(z))$ and $\left(\frac{\partial w_i}{\partial z_j}\right)$ the Jacobian matrix of $f$ at $z$.
		Since for any nonzero tangent vector $\xi\in T_{0}^{1,0}D$, we have \eqref{Gz}, thus by \eqref{CN}, we have
		\begin{equation*}
			\tilde{\varGamma}_{;b}^a(0;\xi)\equiv 0,\quad \forall a,b=1,\cdots,n,
		\end{equation*}
		holds for any nonzero tangent vectors $\xi\in T_{0}^{1,0}D$. Taking $z=z_0$ in \eqref{Nwz},
		we have
		$$\sum_{a,b=1}^n\frac{\partial w_l}{\partial z_a}\varGamma_{;b}^a(z_0;v)\frac{\partial z_b}{\partial w_i}-\sum_{a,b=1}^n
		\frac{\partial^2 w_l}{\partial z_a\partial z_b}\frac{\partial z_a}{\partial w_i} v_b=0,$$
		from which we get
		\begin{equation}
			\varGamma_{;c}^e(z_0;v)=\sum_{b,l=1}^n\frac{\partial^2 w_l}{\partial z_b\partial z_c}\frac{\partial z_e}{\partial w_l}v_b,\label{nce}.
		\end{equation}
		Differentiating \eqref{nce} with respect to $v_b$, we get
		\begin{equation}
			\varGamma_{b;c}^e(z_0;v)=\sum_{b,l=1}^n\frac{\partial^2 w_l}{\partial z_b\partial z_c}\frac{\partial z_e}{\partial w_l},\label{gab}
		\end{equation}
		which are obvious independent of $v$ and symmetric with the lower indexes $b$ and $c$.
		Since $z_0$ is any fixed point in $D$, changing $z_0$ to $z$ if necessary we see that \eqref{gab} holds for any $z\in D$ and  $0\neq v\in T_z^{1,0}D$,
		hence $F$ is a K\"ahler-Berwald metric.
	\end{proof}
	
	\begin{remark}\label{RK1}
		Notice that the right hand side of the equality \eqref{gab} depends only on $\mbox{Aut}(D)$, namely they are independent of the choice of $F$ whenever $F$ satisfies the assumption of the above theorem. In particular, if $D$ is an irreducible classical  domain of type $I-IV$, and \eqref{Gz} holds at the origin $0\in D$ for any nonzero tangent vector $\xi\in T_{0}^{1,0}D$, then the horizontal Chern-Finsler connection coefficients associated to any $F$ coincides with the connection coefficients of the Chern connection associated to the Bergman metric (which is a complete metric on $D$). Consequently,  $F$ and the Bergman metric on $D$ share the same connection coefficients $\varGamma_{b;c}^e$ given by \eqref{gab},   hence $F$ shares the same geodesic (as point set) as that of the Bergman metric on $D$, namely $F$ is a complete metric on $D$.
	\end{remark}

	\begin{proposition}\label{hsc}
		Let $F:T^{1,0}D\rightarrow [0,+\infty)$ be an $\mbox{Aut}(D)$-invariant K\"ahler-Berwald metric on a homogeneous domain $D$ which contains the origin in $\mathbb{C}^n$. If furthermore,
		$$\frac{\partial^2F^2}{\partial z_i\partial\overline{v_l}}(0;v)=0\quad \mbox{or equivalently}\quad \varGamma_{;i}^l(0;v)=0,\quad\forall i,l=1,\cdots,n.$$
		Then the holomorphic sectional curvature along any nonzero tangent vector $v\in T_0^{1,0}D$ is given by
		\begin{equation}
			K(0;v)=-\frac{2}{F^4(0;v)}\sum_{i,j=1}^n\frac{\partial^2F^2}{\partial z_i\partial \overline{z_j}}(0;v)v_i\overline{v_j}.\label{hscb}
		\end{equation}
	\end{proposition}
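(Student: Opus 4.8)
The plan is to start from the general curvature formula \eqref{hsca} and simplify it using the two hypotheses: that $F$ is a K\"ahler-Berwald metric and that the Chern-Finsler nonlinear connection vanishes at the origin along every nonzero direction. First I would recall that the curvature operator $\Omega$ of the Chern-Finsler connection, when contracted against the horizontal radial field $\chi$, can be written in local coordinates in terms of the second derivatives of $G=F^2$ and the connection coefficients $\varGamma_{;i}^l$ and their horizontal/vertical derivatives. Concretely, evaluating at $(0;v)$, the quantity $\langle\Omega(\chi,\overline\chi)\chi,\chi\rangle_{(0;v)}$ reduces to
\[
\sum_{i,j}\Big(-\frac{\partial^2 G}{\partial z_i\partial\overline{z_j}}(0;v)+\sum_{s}G_{s\overline{\,\cdot\,}}\,\varGamma_{;i}^{\,\cdot}\,\overline{\cdots}\Big)v_i\overline{v_j},
\]
so the whole point is that all terms containing $\varGamma_{;i}^l$, or its conjugate, or the derivatives $\partial\varGamma/\partial z$, must be shown to drop out at $(0;v)$.

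The key steps, in order, are: (i) write down the Chern-Finsler curvature in coordinates — for a Berwald metric the connection coefficients $\varGamma_{j;i}^l$ are independent of $v$, so the curvature of the associated linear connection $\nabla$ coincides (after the usual contraction) with the Chern-Finsler horizontal curvature; (ii) invoke the hypothesis $\varGamma_{;i}^l(0;v)=0$, which by \eqref{CCP} and homogeneity in $v$ forces $\varGamma_{;i}^l\equiv 0$ as a function on the fiber over $0$, hence also $\partial\varGamma_{;i}^l/\partial\overline{v_j}=0$ and $\varGamma_{j;i}^l(0;v)=0$ at the origin; (iii) substitute into the curvature expression so that only the term $-\partial^2 G/\partial z_i\partial\overline{z_j}$ survives; (iv) plug into \eqref{hsca}, using $G^2(0;v)=F^4(0;v)$ and $\chi=\sum v_i(\partial_{z_i}-\varGamma_{;i}^l\partial_{v_l})=\sum v_i\partial_{z_i}$ at $(0;v)$, to obtain \eqref{hscb}. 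I should also note that the equivalence "$\partial^2 F^2/\partial z_i\partial\overline{v_l}(0;v)=0 \iff \varGamma_{;i}^l(0;v)=0$" asserted in the statement is immediate from \eqref{CN} together with the positive definiteness of $(G_{i\overline j})$.

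The main obstacle is step (i)–(iii): making precise exactly which terms in the Chern-Finsler curvature operator vanish. The Chern-Finsler curvature $\Omega$ has, in general, both "horizontal-horizontal" and "mixed" components involving vertical derivatives of the connection, and one must be careful that the vanishing of $\varGamma_{;i}^l$ \emph{at a single point of the base} (but along all fiber directions) is enough to kill the relevant curvature contributions there. Here the K\"ahler-Berwald assumption does the work: since $\varGamma_{j;i}^l$ is $v$-independent and equals \eqref{gab}, and since \eqref{gab} vanishes at $z_0=0$ precisely when $\varGamma_{;i}^l(0;v)=0$, \emph{every} first-order connection datum vanishes at $0$, and only the curvature term built purely from $\partial^2 G/\partial z\partial\overline z$ remains. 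Once this bookkeeping is done the formula \eqref{hscb} follows by direct substitution, and the sign is negative because $\big(\partial^2 G/\partial z_i\partial\overline{z_j}(0;v)\big)$ is positive semidefinite in the relevant contraction for these invariant metrics — though strictly speaking the sign is part of the later curvature-estimate theorems rather than of this proposition, so here I would just record the identity \eqref{hscb} itself.
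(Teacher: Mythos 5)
Your proposal is correct and follows essentially the same route as the paper: both start from the coordinate expression of the holomorphic sectional curvature in terms of $\sum_l \frac{\partial F^2}{\partial v_l}\delta_{\bar j}(\varGamma_{;i}^l)$, use the K\"ahler-Berwald property to discard the vertical derivatives of $\varGamma_{;i}^l$ (which is holomorphic in $v$), and then use $\varGamma_{;i}^l(0;v)=0$ together with the homogeneity identity $\sum_l \frac{\partial F^2}{\partial v_l}\varGamma_{;i}^l=\frac{\partial F^2}{\partial z_i}$ to reduce the surviving term to $\frac{\partial^2 F^2}{\partial z_i\partial\overline{z_j}}$. The only step you leave implicit is precisely this last differentiation-in-$\overline{z_j}$ of the homogeneity identity, which is the paper's one-line computation; otherwise the bookkeeping matches.
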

	\begin{proof}In local coordinates $(z;v)=(z_1,\cdots,z_n;v_1,\cdots,v_n)$ on $T^{1,0}D$, the holomorphic sectional curvature $K$ is given by (cf. (2.5.11) on page 109 in \cite{AP}):
		\begin{equation}
			K(z;v)=-\frac{2}{F^4(z;v)}\sum_{i,j,l=1}^n\frac{\partial F^2}{\partial v_l}\delta_{\bar{j}}(\varGamma_{;i}^l)v_i\overline{v_j},\label{kzv}
		\end{equation}
		where $\delta_{\bar{j}}=\frac{\partial}{\partial \overline{z_j}}-\overline{\varGamma_{;j}^l}\frac{\partial}{\partial \overline{v_l}}$.
		Since $F$ is a K\"ahler-Berwald metric,  $\varGamma_{;i}^l$ are holomorphic with respect to  $v$.Thus $\delta_{\bar{j}}(\varGamma_{;i}^l)=\frac{\partial}{\partial\overline{z_j}}(\varGamma_{;i}^l)$. Using \eqref{CN} and the $(1,1)$-homogeneity property of $G$ with respect to $v$, it follows from \eqref{kzv} that at the point $(0;v)$ we have
		$$
		K(0;v)=-\frac{2}{F^4}\sum_{i,j,l=1}^n\left(-\frac{\partial^2F^2}{\partial\overline{z_j} \partial v_l}\varGamma_{;i}^l(0;v)+\frac{\partial^2F^2}{\partial z_i\partial\overline{z_j}}\right)v_i\overline{v_j}=-\frac{2}{F^4(0;v)}\sum_{i,j=1}^n\frac{\partial^2F^2}{\partial z_i\partial\overline{z_j}}(0;v)v_i\overline{v_j},
		$$
		where in the last equality we use the condition $\varGamma_{;i}^l(0;v)=0$.
	\end{proof}
	
	\begin{proposition}\label{bhsc}
		Let $F:T^{1,0}D\rightarrow [0,+\infty)$ be an $\mbox{Aut}(D)$-invariant K\"ahler-Berwald metric on a homogeneous domain $D$ which contains the origin in $\mathbb{C}^n$. If furthermore,
		$$\frac{\partial^2F^2}{\partial z_i\partial\overline{v_l}}(0;v)=0\quad \mbox{or equivalently}\quad \varGamma_{;i}^l(0;v)=0,\quad\forall i,l=1,\cdots,n.$$
		Then the holomorphic bisectional curvature along any nonzero tangent vectors $v,w\in T_0^{1,0}D$ is given by
		\begin{equation}
			B(0;v,w)=-\frac{2}{F^2(0;v)F^2(0;w)}\frac{\partial^2F^2}{\partial z_i\partial \overline{z_j}}(0;v)w_i\overline{w_j}.\label{hbscb}
		\end{equation}
	\end{proposition}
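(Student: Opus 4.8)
The plan is to follow exactly the same strategy as in the proof of Proposition \ref{hsc}, but starting from the local-coordinate formula for the holomorphic bisectional curvature rather than the holomorphic sectional curvature. First I would recall that, by \eqref{hbsca} and the standard computation of the curvature operator $\Omega$ of the Chern-Finsler connection (cf. \cite{AP}, \cite{SZ1}, \cite{WZ}), the holomorphic bisectional curvature of a strongly pseudoconvex complex Finsler metric admits, in local coordinates $(z;v)=(z_1,\dots,z_n;v_1,\dots,v_n)$ on $T^{1,0}D$, the expression
\begin{equation*}
B(z;v,w)=-\frac{2}{F^2(z;v)F^2(z;w)}\sum_{i,j,l=1}^n\frac{\partial F^2}{\partial v_l}\,\delta_{\bar j}\!\left(\varGamma_{;i}^l\right)w_i\overline{w_j},
\end{equation*}
where $\delta_{\bar j}=\frac{\partial}{\partial\overline{z_j}}-\overline{\varGamma_{;j}^l}\frac{\partial}{\partial\overline{v_l}}$, in complete parallel to \eqref{kzv}. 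This is the natural bisectional analogue of the formula used for $K$, obtained by replacing the radial horizontal lift $\chi=\chi_v(v)$ appearing in the second slot of $\Omega$ by the horizontal lift $\chi_v(w)$ of $w$ along $v$.

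Next, since $F$ is assumed to be an $\mbox{Aut}(D)$-invariant K\"ahler-Berwald metric, the Chern-Finsler nonlinear connection coefficients $\varGamma_{;i}^l$ are holomorphic in $v$ (being of the form $\sum_j\varGamma_{j;i}^l v_j$ with $\varGamma_{j;i}^l$ independent of $v$), so the antiholomorphic vertical correction in $\delta_{\bar j}$ drops out: $\delta_{\bar j}(\varGamma_{;i}^l)=\frac{\partial}{\partial\overline{z_j}}(\varGamma_{;i}^l)$. Then I would substitute \eqref{CN}, i.e. $\varGamma_{;i}^l=\sum_s G^{\overline{s}l}\frac{\partial^2 G}{\partial z_i\partial\overline{v_s}}$, differentiate in $\overline{z_j}$, and evaluate at the point $(0;v)$. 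Exactly as in Proposition \ref{hsc}, the hypothesis $\varGamma_{;i}^l(0;v)=0$ (equivalently $\frac{\partial^2 F^2}{\partial z_i\partial\overline{v_l}}(0;v)=0$) kills all terms in which a derivative lands on $G^{\overline{s}l}$ or produces a factor $\varGamma_{;i}^l(0;v)$; combined with the $(1,1)$-homogeneity of $G$ in $v$ (which lets one rewrite $\sum_l\frac{\partial F^2}{\partial v_l}(\cdots)$ cleanly), one is left with
\begin{equation*}
B(0;v,w)=-\frac{2}{F^2(0;v)F^2(0;w)}\sum_{i,j=1}^n\frac{\partial^2 F^2}{\partial z_i\partial\overline{z_j}}(0;v)\,w_i\overline{w_j},
\end{equation*}
which is \eqref{hbscb} (with the summation over $i,j$ understood). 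The final step is to note that at $w=v$ this reduces to \eqref{hscb}, as it must.

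The only genuine subtlety — and the step I expect to require the most care — is justifying the local formula for $B(z;v,w)$ in the displayed form above: one must verify that pairing $\Omega(\chi_v(w),\overline{\chi_v(w)})\chi$ with $\chi$ and dividing by $G(z;v)G(z;w)$ really does replace the two occurrences of $v_i\overline{v_j}$ coming from the $\chi_v(v)$-slots in \eqref{hsca} by $w_i\overline{w_j}$, with the remaining radial horizontal vector $\chi$ producing the factor $\sum_l\frac{\partial F^2}{\partial v_l}$ together with the contraction pattern of $\delta_{\bar j}(\varGamma_{;i}^l)$ unchanged. Once that structural fact is in hand, the rest is a line-by-line repetition of the argument in Proposition \ref{hsc}, using K\"ahler-Berwald-ness to holomorphize $\varGamma_{;i}^l$ in $v$ and the vanishing $\varGamma_{;i}^l(0;v)=0$ to discard the curvature terms involving the connection itself.
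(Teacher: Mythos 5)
Your proposal is correct and follows essentially the same route as the paper: start from the local-coordinate expression for $B(z;v,w)$ (the paper writes it with $\delta_{\bar j}(\varGamma_{a;i}^l)w_i\overline{w_j}v_a$, which equals your $\delta_{\bar j}(\varGamma_{;i}^l)w_i\overline{w_j}$ since $\delta_{\bar j}$ annihilates $v_a$ and $\varGamma_{;i}^l=\sum_a\varGamma_{a;i}^l v_a$), then use the K\"ahler--Berwald condition to reduce $\delta_{\bar j}$ to $\partial/\partial\overline{z_j}$ and the hypothesis $\varGamma_{;i}^l(0;v)=0$ together with homogeneity to arrive at \eqref{hbscb}. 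The paper likewise treats this as a line-by-line repetition of Proposition \ref{hsc}, so no further justification is needed beyond what you outline.
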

	
	\begin{proof}
		In local coordinates $(z;v)=(z_1,\cdots,z_n;v_1,\cdots,v_n)$ on $T^{1,0}D$, we have
		$$
		B(z;v,w)=-\frac{2}{F^2(z;v)F^2(z;w)}\sum_{a,i,j,l=1}^n\frac{\partial F^2}{\partial v_l} \delta_{\bar{j}}(\varGamma_{a;i}^l)w_i\overline{w_j}v_a.
		$$
		As in the proof of Proposition \ref{hsc}, using the K\"ahler-Berwald condition and the condition $\varGamma_{;i}^l(0;v)=0$, we have
		\begin{equation}
			B(0;v,w)=-\frac{2}{F^2(0;v)F^2(0;w)}\sum_{i,j=1}^n\frac{\partial^2F^2}{\partial z_i\partial \overline{z_j}}w_i\overline{w_j}.\label{hbscc}
		\end{equation}
	\end{proof}
	
	In the next section, we shall use \eqref{hscb} and \eqref{hbscc} to derive the holomorphic sectional curvatures and bisectional curvatures of \eqref{FI}, \eqref{FII} and \eqref{FIII} and \eqref{FIV}, respectively.

	\section{Holomorphic invariant metrics on the irreducible classical domains}\label{section-3}

	Denote $\mathscr{M}(m,n)$ the set of all $m\times n$ matrices over the complex number field $\mathbb{C}$, which is a complex vector space isomorphic to $\mathbb{C}^{m n}$. In the following, $\mathbb{C}^n$ means $\mathscr{M}(1,n)$.
	If $m=n$, we denote $\mathscr{M}(n)$ instead of $\mathscr{M}(n,n)$.
	Let $Z\in\mathscr{M}(m,n)$, we denote $Z'$ the transpose of $Z$ and $\overline{Z}$ the complex conjugate of $Z$. If $A\in \mathscr{M}(n)$ be a Hermitian matrix, we also denote $A>0$ to mean
	$A$ is positive definite, and $A\geq 0$ to mean $A$ is semi-positive definite. If $A,B\in\mathscr{M}(n)$ are Hermitian matrices, we denote $A\leq B$ to mean
	$B-A\geq 0$.
	
	A classical domain is a domain which is equivalent to one, or to the topological product of several, of the following irreducible classical domains (cf. \cite{Hua},\cite{Look1, Look2, Look3}):
	\begin{eqnarray*}
		\mathfrak{R}_I(m;n)    &:& I_m-Z\overline{Z'}>0,\; Z\in \mathscr{M}(m,n),\quad (m\leq n),\\
		\mathfrak{R}_{II}(p) &:& I_p-Z\overline{Z}>0,\; Z=Z',\;Z\in\mathscr{M}(p), \quad p\geq 2,\\
		\mathfrak{R}_{III}(q)&:& I_q+Z\overline{Z}>0,\; Z=-Z',\; Z\in\mathscr{M}(q), \quad q\geq 4,\\
		\mathfrak{R}_{IV}(N) &:& 1+|zz'|^2-2z\overline{z'}>0,\;1-|zz'|>0, z=(z_1,\cdots,z_N)\in\mathbb{C}^N,\quad N\geq 5,
	\end{eqnarray*}
	where $I_m$ denotes the $m\times m$ unit matrix. These are complete circular domains with center $0$ and convex. The holomorphic automorphism  $\mbox{Aut}(\mathfrak{R}_A)$ acts on $\mathfrak{R}_A$ transitively for $A=I,II,III,IV$, respectively.
	
	Note that
	$$\mathfrak{R}_I\subset \mathbb{C}^{mn}, \quad \mathfrak{R}_{II}\subset\mathbb{C}^{\frac{p(p+1)}{2}},\quad \mathfrak{R}_{III}\subset\mathbb{C}^{\frac{q(q-1)}{2}},\quad \mathfrak{R}_{IV}\subset\mathbb{C}^N$$
	and
	\begin{eqnarray*}
		\dim_{\mathbb{C}}\mathfrak{R}_I&=&mn,\quad \dim_{\mathbb{C}}\mathfrak{R}_{II}=\frac{p(p+1)}{2},\quad \dim_{\mathbb{C}}\mathfrak{R}_{III}=\frac{q(q-1)}{2},\quad\dim_{\mathbb{C}}\mathfrak{R}_{IV}=N,\\
		\mbox{rank}\,\mathfrak{R}_I&=&m,\quad\mbox{rank}\,\mathfrak{R}_{II}=p,\quad \mbox{rank}\,\mathfrak{R}_{III}=\left[\frac{q}{2}\right],\quad \mbox{rank}\,\mathfrak{R}_{IV}=2.
	\end{eqnarray*}
	
	It is well-known that (cf. \cite{Hua},\cite{Look3})
	\begin{eqnarray*}
		ds_I^2
		&=&(m+n)\mbox{tr}\left[(I-Z\overline{Z'})^{-1}dZ(I-\overline{Z'}Z)^{-1}\overline{dZ'}\right],\\
		ds_{II}^2
		&=&(p+1)\mbox{tr}\left[(I-\overline{Z}Z)^{-1}\overline{dZ}(I-Z\overline{Z})^{-1}dZ\right],\\
		ds_{III}^2
		&=&(q-1)\mbox{tr}\left[(I+Z\overline{Z})^{-1}dZ(I+\overline{Z}Z)^{-1}\overline{dZ'}\right],\\
		ds_{IV}^2&=&\frac{2N}{(1+|zz'|^2-2z\overline{z'})^2}\Big[
		(1+|zz'|^2-2z\overline{z'})dz\overline{dz'}+4z\overline{z'}|zdz'|^2\nonumber\\
		&&-2(\overline{zz'}zdz'z\overline{dz'}+zz'\overline{zdz'}\overline{z}dz')
		-2(|zdz'|^2-|z\overline{dz'}|^2)
		\Big]
	\end{eqnarray*}
	are the corresponding Bergman metrics on $\mathfrak{R}_{A}(A=I,II,III,IV)$ respectively. They are all complete K\"ahler metrics with negative holomorphic sectional curvatures and nonpositive bisectional curvatures, and they are all Hermitian quadratic metrics.
	
	A natural question one may ask is whether $\mathfrak{R}_A$ admits $\mbox{Aut}(\mathfrak{R}_A)$-invariant strongly pseudoconvex complex Finsler metrics which are non-Hermitian quadratic and can be explicitly expressed? For $m=1$, $\mathfrak{R}_I$ is the usual open unit ball $B_n$ in $\mathbb{C}^n$. Since it was proved in \cite{Zh2} that, other than a positive constant multiple of the Bergman metric, there exists no $\mbox{Aut}(B_n)$-invariant strongly pseudoconvex complex Finsler metrics on $B_n$, in the following we assume that $m\geq 2$.

	\subsection{Complex Minkowski norms on $\mathscr{M}(m,n)$ which are $U(m)\otimes U(n)$-invariant}\label{subsection-3.1}
	
	Since $\mathfrak{R}_A (A=I,II,III,IV)$ are homogeneous domains, by Theorem \ref{hfm}, to construct $\mbox{Aut}(\mathfrak{R}_A)$-invariant strongly pseudoconvex complex Finsler metrics it suffices to construct $\mbox{Iso}(\mathfrak{R}_A)$-invariant complex Minkowski norms on $T_0^{1,0}\mathfrak{R}_A$.  For this purpose, we first construct some complex Minkowski norms on the more general complex vector space $\mathscr{M}(m,n)$.
	
	Note that there is a natural Hermitian inner product on $\mathscr{M}(m,n)$:
	\begin{equation*}
		\langle Z,W\rangle:=\mbox{tr}\{Z\overline{W'}\},\quad \forall Z,W\in\mathscr{M}(m,n).
	\end{equation*}

	\begin{definition}\label{def-4-2}
		Let $t\in[0,+\infty)$, $k\in\mathbb{N}$ and $k\geq 2$. Define a complex norm $\pmb{f}:\mathscr{M}(m,n)\rightarrow[0,+\infty)$ by setting
		\begin{equation}
			\pmb{f}^2(V):=\frac{1}{1+t}\left\{\mbox{tr}\left\{V\overline{V'}\right\}+t\sqrt[k]{\mbox{tr}\left\{(V\overline{V'})^k\right\}}\right\},\quad \forall V\in\mathscr{M}(m,n).\label{bf}
		\end{equation}
	\end{definition}
	
	\begin{remark} If $m=1$, then
		\begin{equation*}
			\mbox{tr}\left\{(V\overline{V'})^k\right\}=\left\{\mbox{tr}(V\overline{V'})\right\}^k
		\end{equation*}
		for any $k\in\mathbb{N}$ with $k\geq 2$, hence $\pmb{f}^2(V)=\mbox{tr}\{V\overline{V'}\}$ is the usual
		complex Euclidean norm on $\mathscr{M}(1,n)\cong \mathbb{C}^n$ which is $U(1\times n)$-invariant;
		If $m\geq 2$, however, taking
		$
		V=\begin{pmatrix}
			a & 0 \\
			0 & b \\
		\end{pmatrix}
		$
		with $a,b\in\mathbb{C}$ and $ab\neq 0$, then
		$$\mbox{tr}\left\{(V\overline{V'})^k\right\}=|a|^{2k}+|b|^{2k}\neq (|a|^2+|b|^2)^k=\left\{\mbox{tr}(V\overline{V'})\right\}^k.$$
		Therefore, as a complex norm on $\mathscr{M}(m,n)\cong\mathbb{C}^{mn}$ with $m\geq 2$, $\pmb{f}$ is not $U(m\times n)$-invariant.
	\end{remark}
	
	For $m\in\mathbb{N}$, we denote $U(m)$ the unitary group consisting of all matrices $A\in\mathscr{M}(m)$ satisfying $A\overline{A'}=I_m$.
	The following proposition is immediate.
	\begin{proposition}\label{prop-4-2}
		For any $A\in U(m)$ , $B\in U(n)$ and $ \forall V\in\mathscr{M}(m,n) $ we have
		\begin{eqnarray}
			\pmb{f}(AVB)&=&\pmb{f}(V),\label{ftd}\\
			\pmb{f}(AV'B)&=&\pmb{f}(V).\label{ftd1}
		\end{eqnarray}
	\end{proposition}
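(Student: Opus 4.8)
The plan is to observe that $\pmb f(V)$ depends on $V$ only through the spectral data of the Hermitian positive semidefinite matrix $H(V):=V\overline{V'}\in\mathscr{M}(m)$, and then to check that each of the two operations $V\mapsto AVB$ and $V\mapsto V'$ leaves this spectral data unchanged. Concretely, the two summands in \eqref{bf} are $\mbox{tr}\{V\overline{V'}\}=\mbox{tr}\,H(V)$ and $\mbox{tr}\{(V\overline{V'})^k\}=\mbox{tr}\,H(V)^k$, so writing $\lambda_1,\dots,\lambda_m\ge 0$ for the eigenvalues of $H(V)$ we have
\[
\pmb f^2(V)=\frac{1}{1+t}\Bigl(\textstyle\sum_{i=1}^m\lambda_i+t\bigl(\sum_{i=1}^m\lambda_i^{\,k}\bigr)^{1/k}\Bigr).
\]
Hence it suffices to prove that $H(AVB)$ and $H(V')$ have the same eigenvalues (with multiplicity) as $H(V)$, or even just that $\mbox{tr}\,H(\cdot)^j$ agrees for $j=1$ and $j=k$.

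For \eqref{ftd}: since $A\overline{A'}=I_m$ and $B\overline{B'}=I_n$, a direct computation gives
\[
H(AVB)=(AVB)\overline{(AVB)'}=AVB\,\overline{B'}\,\overline{V'}\,\overline{A'}=A\bigl(V\overline{V'}\bigr)\overline{A'}=A\,H(V)\,A^{-1},
\]
using $\overline{A'}=A^{-1}$ for $A\in U(m)$. Thus $H(AVB)$ is conjugate to $H(V)$, so $\mbox{tr}\,H(AVB)^j=\mbox{tr}\,H(V)^j$ for every $j\ge 1$, and plugging $j=1$ and $j=k$ into the displayed formula for $\pmb f^2$ yields $\pmb f(AVB)=\pmb f(V)$.

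For \eqref{ftd1} I would first prove $\pmb f(V')=\pmb f(V)$, after which \eqref{ftd1} is just \eqref{ftd} applied to the matrix $V'\in\mathscr{M}(n,m)$ (with $A,B$ understood to be unitaries of the sizes making $AV'B$ meaningful). Here $H(V')=V'\overline{V}$; using that transposition reverses order of products and commutes with taking powers, $\bigl((V'\overline V)^j\bigr)'=(\overline{V'}V)^j$, and since the trace is invariant under transposition, $\mbox{tr}(V'\overline V)^j=\mbox{tr}(\overline{V'}V)^j$. Finally the elementary rectangular cyclicity identity $\mbox{tr}\{(PQ)^j\}=\mbox{tr}\{(QP)^j\}$ for $P\in\mathscr{M}(m,n)$, $Q\in\mathscr{M}(n,m)$ — immediate from $(PQ)^j=P(QP)^{j-1}Q$ and ordinary cyclicity of the trace — applied with $P=V$, $Q=\overline{V'}$ gives $\mbox{tr}(\overline{V'}V)^j=\mbox{tr}(V\overline{V'})^j=\mbox{tr}\,H(V)^j$. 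Chaining these equalities for $j=1$ and $j=k$ shows $\pmb f(V')=\pmb f(V)$. There is no serious obstacle; the only point requiring care is the bookkeeping with transpose versus conjugate transpose (one must not confuse $V'$ with $\overline{V'}$) and, relatedly, the implicit change of matrix sizes in \eqref{ftd1}, which is exactly why it is cleanest to isolate the equality $\pmb f(V')=\pmb f(V)$ and then appeal to \eqref{ftd}.
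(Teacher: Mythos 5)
Your proof is correct and follows essentially the same route as the paper's: both reduce everything to the invariance of $\mbox{tr}\{V\overline{V'}\}$ and $\mbox{tr}\{(V\overline{V'})^k\}$ under $V\mapsto AVB$ and $V\mapsto V'$, using cyclicity of the trace and its invariance under transposition (your conjugation observation $H(AVB)=AH(V)\overline{A'}$ is just a repackaging of the paper's computation $\mbox{tr}\{A(V\overline{V'})^k\overline{A'}\}=\mbox{tr}\{(V\overline{V'})^k\}$). Your explicit handling of the size bookkeeping in \eqref{ftd1} is a minor point the paper leaves implicit, but it does not change the argument.
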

	\begin{proof}
		Indeed, using the identities $\mbox{tr}\{XY\}=\mbox{tr}\{YX\}$ and   $\mbox{tr}\{Z\}=\mbox{tr}\{Z'\}$ for any $X,Y,Z\in\mathscr{M}(m,n)$, we have
		\begin{eqnarray*}
			\mbox{tr}\left\{(AVB)\overline{(AVB)'}\right\}=\mbox{tr}\left\{V\overline{V'}\right\},\quad \mbox{tr}\left\{(AV'B)\overline{(AV'B)'}\right\}
			=\mbox{tr}\left\{V'\overline{V}\right\}
			=\mbox{tr}\left\{V\overline{V'}\right\}
		\end{eqnarray*}
		and
		\begin{eqnarray*}
			\mbox{tr}\left\{\left[(AVB)\overline{(AVB)'}\right]^k\right\}
			&=&\mbox{tr}\left\{\underbrace{(AVB)(\overline{B'V'A'})\cdots(AVB)(\overline{B'V'A'})}_{k\;\mbox{times}}\right\}\\
			&=&\mbox{tr}\left\{A(V\overline{V'})^k \overline{A'}\right\}=\mbox{tr}\left\{\overline{A'}A(V\overline{V'})^k \right\}
			=\mbox{tr}\left\{(V\overline{V'})^k\right\}\\
			\mbox{tr}\left\{\left[(AV'B)\overline{(AV'B)'}\right]^k\right\}
			&=&\mbox{tr}\left\{A(V '\overline{V})^k \overline{A'}\right\}
			=\mbox{tr}\left\{(V'\overline{V})^k \right\}
			=\mbox{tr}\left\{(V\overline{V'})^k\right\}.
		\end{eqnarray*}
		This completes the proof of \eqref{ftd}.
	\end{proof}
	
	\begin{remark}\label{3.2} For every $V\in\mathscr{M}(m,n), $ there exist unitary matrices $A,B$ such that
		$$ AVB=\begin{pmatrix}
			\lambda_1(V) &0 & 0&0&\cdots&0 \\
			0 & \ddots &0& \vdots &\ddots&\vdots\\
			0 & 0 & \lambda_{m}(V)&0 &\cdots&0\\
		\end{pmatrix},\quad \lambda_1(V) \geq\lambda_2(V)\cdots\geq\lambda_m(V)\geq0.
		$$
		Thus
		\begin{equation}
			\pmb{f}^2(V):=\frac{1}{1+t}\left\{\sum_{i=1}^m\lambda_i^2(V)+t\sqrt[k]{\sum_{i=1}^m\lambda_i^{2k}(V)}\right\}.\label{bf1}
		\end{equation}
		
		In fact any  function  $\pmb{f}:\mathscr{M}(m,n)\rightarrow[0,+\infty)$ satisfying condition \eqref{ftd} must be a symmetric function with respect to $ \lambda_1(V),\cdots,\lambda_m(V). $
	\end{remark}

	The following proposition is key to us in the constructions of holomorphic invariant metrics on $\mathfrak{R}_I,\mathfrak{R}_{II}$ and $\mathfrak{R}_{III}$, which shows that the complex norm $\pmb{f}$ has good smoothness and convexity. This together with Proposition \ref{prop-4-2} makes it possible for us to construct holomorphic invariant strongly pseudoconvex complex Finsler metrics  on $\mathfrak{R}_I,\mathfrak{R}_{II}$ and $\mathfrak{R}_{III}$, respectively, which are not necessary Hermitian quadratic.
	
	\begin{proposition}\label{prop-4-3}
		$\pmb{f}$ is a complex Minkowski norm on $\mathscr{M}(m,n)$ for any fixed $t\in[0,+\infty),k\in\mathbb{N}$ and $k\geq 2$.
	\end{proposition}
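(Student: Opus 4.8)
The plan is to verify the defining properties of a complex Minkowski norm on $\mathscr{M}(m,n)\cong\mathbb{C}^{mn}$ recalled in Section~\ref{section-2}: that $\pmb{f}$ is continuous with $\pmb{f}^2$ smooth and positive off the origin, that $\pmb{f}(\lambda V)=|\lambda|\pmb{f}(V)$, and that the Levi matrix $\big(\partial^2\pmb{f}^2/\partial V_{ij}\partial\overline{V_{kl}}\big)$ is positive definite at every $V\neq 0$. Write $p_1(V):=\mbox{tr}(V\overline{V'})=\sum_{i,j}|V_{ij}|^2$ and $p_k(V):=\mbox{tr}\big((V\overline{V'})^k\big)$, so that $\pmb{f}^2=\tfrac{1}{1+t}\big(p_1+t\,p_k^{1/k}\big)$. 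If $\lambda_1(V)\ge\cdots\ge\lambda_m(V)\ge0$ denote the eigenvalues of the positive semidefinite Hermitian matrix $V\overline{V'}$ (equivalently $\lambda_i(V)=\sigma_i(V)^2$ for the singular values $\sigma_i(V)$ of $V$), then $p_k(V)=\sum_i\lambda_i(V)^k$ is the $k$-th power sum, hence a genuine polynomial in the entries of $V$ and $\overline V$, notwithstanding that the individual $\lambda_i(V)$ need not be smooth. Since $p_k(V)=\sum_i\lambda_i(V)^k>0$ exactly when $V\neq0$ and $x\mapsto x^{1/k}$ is $C^\infty$ on $(0,+\infty)$, the function $p_k^{1/k}$ is $C^\infty$ on $\mathscr{M}(m,n)\setminus\{0\}$; hence $\pmb{f}^2$ is $C^\infty$ there, $\pmb{f}$ is continuous on all of $\mathscr{M}(m,n)$, and $\pmb{f}^2(V)\ge\tfrac{1}{1+t}p_1(V)>0$ for $V\neq0$. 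Homogeneity is immediate from $V\overline{V'}\mapsto|\lambda|^2V\overline{V'}$ under $V\mapsto\lambda V$, which gives $p_1(\lambda V)=|\lambda|^2p_1(V)$, $p_k(\lambda V)=|\lambda|^{2k}p_k(V)$, and therefore $\pmb{f}^2(\lambda V)=|\lambda|^2\pmb{f}^2(V)$.

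The substance of the proof is the positivity of the Levi matrix, and I expect this to be the only real obstacle. Because $t\ge0$ and $\tfrac{1}{1+t}>0$, and because the Levi matrix of $p_1=\sum_{i,j}|V_{ij}|^2$ is the identity, it is enough to prove that $p_k^{1/k}$ has positive semidefinite Levi matrix on $\mathscr{M}(m,n)\setminus\{0\}$ — equivalently that $p_k^{1/k}$ is plurisubharmonic there — since then the Levi matrix of $\pmb{f}^2$ dominates $\tfrac{1}{1+t}$ times the identity and is in particular positive definite. The key identity is
\[
p_k(V)^{1/k}=\Big(\sum_i\sigma_i(V)^{2k}\Big)^{1/k}=\|V\|_{S_{2k}}^2,
\]
where $\|\cdot\|_{S_{2k}}$ is the Schatten $2k$-norm on $\mathscr{M}(m,n)$. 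Schatten norms obey the triangle inequality (a classical fact in matrix analysis), hence are convex functions on $\mathbb{R}^{2mn}$; the square of a nonnegative convex function is convex; and a convex function on $\mathbb{C}^{mn}\cong\mathbb{R}^{2mn}$ is plurisubharmonic, its restriction to any affine complex line being convex on $\mathbb{R}^2$ and therefore subharmonic. Consequently the Levi matrix of $p_k^{1/k}=\|V\|_{S_{2k}}^2$ is positive semidefinite wherever this function is $C^2$, namely on $\mathscr{M}(m,n)\setminus\{0\}$, so the Levi matrix of $\pmb{f}^2$ is positive definite on $\mathscr{M}(m,n)\setminus\{0\}$; this is exactly the strong pseudoconvexity of every indicatrix $\{V:\pmb{f}(V)<1\}$.

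Taken together these verifications show that $\pmb{f}$ is a complex Minkowski norm on $\mathscr{M}(m,n)$. Two comments on the argument. First, the $p_1$-term is genuinely needed for \emph{strict} positivity: for $k\ge2$ the Levi matrix of $p_k^{1/k}$ alone degenerates at rank-deficient $V$ — along matrix entries lying in the kernel of $V\overline{V'}$ the quantity $\|V+\varepsilon W\|_{S_{2k}}^2$ varies only to order $|\varepsilon|^{2k}$ — which also explains why one takes $t$ finite, so that the coefficient $\tfrac{1}{1+t}$ of $p_1$ stays positive. Second, if one wishes to avoid quoting the Schatten triangle inequality, the same conclusion follows from the diagonalized form \eqref{bf1}, $\pmb{f}^2(V)=\tfrac{1}{1+t}\big(\sum_i\lambda_i^2+t(\sum_i\lambda_i^{2k})^{1/k}\big)$ with $\lambda_i=\sigma_i(V)$, together with the fact that $(s_1,\dots,s_m)\mapsto(\sum_i s_i^{2k})^{1/(2k)}$ is a symmetric gauge function and von Neumann's theorem identifying the associated unitarily invariant matrix norm; but recognizing $p_k^{1/k}$ as a squared Schatten norm renders the convexity, and hence the plurisubharmonicity, immediate and bypasses any direct computation of second derivatives.
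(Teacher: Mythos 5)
Your proof is correct, and it takes a genuinely different route from the paper's. The paper proves the positivity of the Levi matrix by brute force: it computes $\partial^2\pmb{f}^2/\partial V_{ij}\partial\overline{V_{ab}}$ explicitly, contracts with an arbitrary $W$, and reduces the question to the family of trace inequalities
\[
\left|\mbox{tr}\left[W(\overline{V'}V)^{i+1}\overline{V'}(V\overline{V'})^{k-i-2}\right]\right|^2\leq \mbox{tr}\left[(V\overline{V'})^k\right]\mbox{tr}\left[W(\overline{V'}V)^{i+1}\overline{W'}(V\overline{V'})^{k-i-2}\right],
\]
which it verifies by a four-way case analysis on the parities of $i$ and $k$ together with the Cauchy--Schwarz inequality for the trace inner product. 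This yields the explicit lower bound $\frac{1}{1+t}\{\mbox{tr}(W\overline{W'})+t\mathcal{A}^{\frac{1}{k}-1}\mbox{tr}[W\overline{W'}(V\overline{V'})^{k-1}]\}$ for the Levi form, i.e.\ a quantitative statement that is strictly stronger than positive definiteness (and which makes Remark \ref{rm-sc} on strong convexity for small $t$ plausible). Your argument instead recognizes $p_k^{1/k}$ as the square of the Schatten $2k$-norm, so that convexity of that norm gives plurisubharmonicity for free, and the identity Levi matrix coming from the $p_1$-term supplies strict positivity. This is shorter and conceptually cleaner, at the cost of outsourcing the work to the triangle inequality for Schatten norms (itself a nontrivial classical fact, usually proved via von Neumann's trace inequality or symmetric gauge functions) and of losing the explicit lower bound. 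Since the quantitative bound is not invoked elsewhere in the paper, your softer argument fully establishes the proposition; your closing observation that the $p_k^{1/k}$-term alone degenerates at rank-deficient $V$ correctly identifies why the $p_1$-term is indispensable. All the routine verifications (smoothness of $p_k^{1/k}$ off the origin because $p_k$ is a positive polynomial there, the $(1,1)$-homogeneity, continuity and positivity) are handled properly.
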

	\begin{proof} For $t=0$, $\pmb{f}^2(V)=\mbox{tr}\{V\overline{V'}\}$ is the canonical complex Euclidean norm of $V\in \mathscr{M}(m,n)\cong \mathbb{C}^{mn}$, which is smooth and strongly pseudoconvex on the whole $\mathscr{M}(m,n)$, hence a complex Minkowski norm on $\mathscr{M}(m,n)$.
		
		For any $t\in(0,+\infty)$ and any $k\in\mathbb{N}$ with $k\geq 2$, $\pmb{f}^2$ is smooth on $\mathscr{M}(m,n)\setminus\{0\}$. Thus in order to show that $\pmb{f}$ is a complex Minkowski norm on $\mathscr{M}(m,n)$, it suffices to show that $\pmb{f}^2$ is strongly pseudoconvex at any $V=(V_{ij})\in \mathscr{M}(m,n)\setminus\{0\}$.
		Notice that
		$$
		\frac{\partial}{\partial V_{ij}}V=E_{ij},\quad \frac{\partial}{\partial V_{ij}}\overline{V'}=0,\quad \frac{\partial}{\partial\overline{V_{kl}}}\overline{V'}=E_{kl}'=E_{lk},\quad \frac{\partial}{\partial \overline{V_{kl}}}V=0,
		$$
		where $E_{ij}\in \mathscr{M}(m,n)$ whose only nonzero element is $1$ on row $i$ and column $j$, and $E_{kl}'\in\mathscr{M}(n,m)$ denotes transpose of $E_{kl}$. Denote $\mathcal{A}:=\mbox{tr}\left\{(V\overline{V'})^k\right\}$, then
		\begin{eqnarray}
			\frac{\partial^2 \pmb{f}^2}{\partial V_{ij}\partial \overline{V_{ab}}}
			&=&\frac{1}{1+t}\left\{\delta_{ia}\delta_{jb}
			+\frac{t}{k}\mathcal{A}^{\frac{1}{k}-1}\frac{\partial^2\mathcal{A}}{\partial V_{ij}\partial \overline{V_{ab}}}+\frac{t}{k^2}(1-k)\mathcal{A}^{\frac{1}{k}-2}\frac{\partial\mathcal{A}}{\partial V_{ij}}\frac{\partial\mathcal{A}}{\partial \overline{V_{ab}}}\right\}.
			\label{lfof}
		\end{eqnarray}
		
		Now let $W=(W_{ij})\in \mathscr{M}(m,n)$, by a direct calculation we get
		\begin{eqnarray*}
			\sum_{i,a=1}^m\sum_{j,b=1}^n\frac{\partial^2\mathcal{A}}{\partial V_{ij}\partial \overline{V_{ab}}}W_{ij}\overline{W_{ab}}
			&=&k\left\{\mbox{tr}\left\{W\overline{W'}(V\overline{V'})^{k-1}\right\}+\sum_{i=0}^{k-2}\mbox{tr}\left\{W(\overline{V'}V)^{i+1}\overline{W'}(V\overline{V'})^{k-i-2}\right\}\right\},\\
			\sum_{i,a=1}^m\sum_{j,b=1}^n\frac{\partial\mathcal{A}}{\partial V_{ij}}\frac{\partial\mathcal{A}}{\partial \overline{V_{ab}}}W_{ij}\overline{W_{ab}}
			&=&k^2\mbox{tr}\left\{W\overline{V'}(V\overline{V'})^{k-1}\right\}\mbox{tr}\left\{(V\overline{V'})^{k-1}V\overline{W'}\right\},
		\end{eqnarray*}
		which together with \eqref{lfof} yields
		\begin{eqnarray*}
			\sum_{i,a=1}^m\sum_{j,b=1}^n\frac{\partial^2 \pmb{f}^2}{\partial V_{ij}\partial \overline{V_{ab}}}W_{ij}\overline{W_{ab}}
			&=&\frac{1}{1+t}\Bigg\{\mbox{tr}\left\{W\overline{W'}\right\}+t\mathcal{A}^{\frac{1}{k}-1}
			\mbox{tr}\left\{W\overline{W'}(V\overline{V'})^{k-1}\right\}\\
			&&+t\mathcal{A}^{\frac{1}{k}-2}\sum_{i=0}^{k-2}\Big[\mathcal{A}\cdot\mbox{tr}\left\{W(\overline{V'}V)^{i+1}\overline{W'}(V\overline{V'})^{k-i-2}\right\}\\
			&&-\mbox{tr}\left\{W\overline{V'}(V\overline{V'})^{k-1}\right\}\mbox{tr}\left\{(V\overline{V'})^{k-1}V\overline{W'}\right\}\Big]\Bigg\}.
		\end{eqnarray*}
		Notice that $\mbox{tr}\left\{W\overline{W'}\right\}$ and $\mbox{tr}\left\{W\overline{W'}(V\overline{V'})^{k-1}\right\}$ are nonnegative,
		in order to show that the right hand side of the above equality is nonnegative, it suffices to show that
		\begin{equation}
			\mathcal{A}\cdot\mbox{tr}\left[W(\overline{V'}V)^{i+1}\overline{W'}(V\overline{V'})^{k-i-2}\right]
			-\mbox{tr}\left[W\overline{V'}(V\overline{V'})^{k-1}\right]\mbox{tr}\left[(V\overline{V'})^{k-1}V\overline{W'}\right]\label{tiq}
		\end{equation}
		is nonnegative for any fixed integer $k\in\mathbb{N}$, $k\geq 2$ and $i\in\{0,1,2,\cdots,k-2\}$. Indeed, using the following identities:
		$$
		\mbox{tr}\{AB\}=\mbox{tr}\{BA\}, \quad \mbox{tr}\{C\}=\mbox{tr}\{C'\},\quad \forall A,B\in\mathscr{M}(m,n),C\in\mathscr{M}(m),
		$$
		we are able to rewrite
		\begin{eqnarray*}
			\mathcal{A}=\mbox{tr}\left[(V\overline{V'})^k\right]&=&\mbox{tr}\left[V(\overline{V'}V)^{i+1}\overline{V'}(V\overline{V'})^{k-i-2}\right],\\
			\mbox{tr}\left[W\overline{V'}(V\overline{V'})^{k-1}\right]&=&\mbox{tr}\left[W(\overline{V'}V)^{i+1}\overline{V'}(V\overline{V'})^{k-i-2}\right],\\
			\mbox{tr}\left[(V\overline{V'})^{k-1}V\overline{W'}\right]&=&\overline{\mbox{tr}\left[W(\overline{V'}V)^{i+1}\overline{V'}(V\overline{V'})^{k-i-2}\right]}.
		\end{eqnarray*}
		So that in order to show \eqref{tiq} is nonnegative, it suffices to show that
		\begin{eqnarray*}
			\left|\mbox{tr}\left[W(\overline{V'}V)^{i+1}\overline{V'}(V\overline{V'})^{k-i-2}\right]\right|^2\leq \mbox{tr}\left\{(V\overline{V'})^k\right\}\mbox{tr}\left\{W(\overline{V'}V)^{i+1}\overline{W'}(V\overline{V'})^{k-i-2}\right\}.
		\end{eqnarray*}
		
		Indeed, on the one hand, we have
		\begin{eqnarray*}
			&&\mbox{tr}\left\{W(\overline{V'}V)^{i+1}\overline{W'}(V\overline{V'})^{k-i-2}\right\}\nonumber\\
			&=&\left\{
			\begin{array}{ll}
				\mbox{tr}\left\{\left[(V\overline{V'})^{s}W(\overline{V'}V)^{l}\right]\overline{\left[(V\overline{V'})^{s}W(\overline{V'}V)^{l}\right]'}\right\}, & \hbox{if}\;\;i=2l-1, k=2s+i+2 \\
				\mbox{tr}\left\{\left[\overline{V'}(V\overline{V'})^{s}W(\overline{V'}V)^{l}\right]\overline{\left[\overline{V'}(V\overline{V'})^{s}W(\overline{V'}V)^{l}\right]'}\right\}, &\hbox{if}\;\; i=2l-1, k=2s+i+3 \\
				\mbox{tr}\left\{\left[(V\overline{V'})^{s}W(\overline{V'}V)^{l}\overline{V'}\right]\overline{\left[(V\overline{V'})^{s}W(\overline{V'}V)^{l}\overline{V'}\right]'}\right\}, &\hbox{if}\;\; i=2l, k=2s+i+2 \\
				\mbox{tr}\left\{\left[\overline{V'}(V\overline{V'})^{s}W(\overline{V'}V)^{l}\overline{V'}\right]\overline{\left[\overline{V'}(V\overline{V'})^{s}W(\overline{V'}V)^{l}\overline{V'}\right]'}\right\}, &\hbox{if}\;\; i=2l, k=2s+i+3.
			\end{array}
			\right.
		\end{eqnarray*}
		On the other hand, using the Cauchy-Schwarz inequality $|\mbox{tr}\{X\overline{Y'}\}|^2\leq \mbox{tr}\{X\overline{X'}\}\mbox{tr}\{Y\overline{Y'}\}$, we have
		\begin{eqnarray*}
			&&\mbox{tr}\left[(V\overline{V'})^{k}\right]\mbox{tr}\left[W(\overline{V'}V)^{i+1}\overline{W'}(V\overline{V'})^{k-i-2}\right]\\
			&=&\left\{
			\begin{array}{ll}
				\mbox{tr}\left[(V\overline{V'})^{2s+2l+1}\right]\mbox{tr}\left[W(\overline{V'}V)^{2l}\overline{W'}(V\overline{V'})^{2s}\right], &\hbox{if}\;\;i=2l-1, k=2s+i+2\\
				\mbox{tr}\left[(V\overline{V'})^{2s+2l+2}\right]\mbox{tr}\left[W(\overline{V'}V)^{2l}\overline{W'}(V\overline{V'})^{2s+1}\right], &\hbox{if}\;\;i=2l-1, k=2s+i+3\\
				\mbox{tr}\left[(V\overline{V'})^{2s+2l+2}\right]\mbox{tr}\left[W(\overline{V'}V)^{2l+1}\overline{W'}(V\overline{V'})^{2s}\right], &\hbox{if}\;\;i=2l, k=2s+i+2\\
				\mbox{tr}\left[(V\overline{V'})^{2s+2l+3}\right]\mbox{tr}\left[W(\overline{V'}V)^{2l+1}\overline{W'}(V\overline{V'})^{2s+1}\right], &\hbox{if}\;\;i=2l, k=2s+i+3\\
			\end{array}
			\right.\\
			&\geq&\left\{
			\begin{array}{ll}
				\left|\mbox{tr}\left\{(\overline{V'}V)^{s+l}\overline{V'}\left[(V\overline{V'})^{s}W(\overline{V'}V)^{l}\right]\right\}\right|^2,&\hbox{if}\;\; i=2l-1, k=2s+i+2\\
				\left|\mbox{tr}\left\{(\overline{V'}V)^{s+l+1}\left[\overline{V'}(V\overline{V'})^{s}W(\overline{V'}V)^{l}\right]\right\}\right|^2, & \hbox{if}\;\; i=2l-1, k=2s+i+3\\
				\left|\mbox{tr}\left\{(V\overline{V'})^{s+l+1}\left[(V\overline{V'})^{s}W(\overline{V'}V)^{l}\overline{V'}\right]\right\}\right|^2, & \hbox{if}\;\;i=2l, k=2s+i+2\\
				\left|\mbox{tr}\left\{(\overline{V'}V)^{s+l+1}\overline{V'}\left[V(\overline{V'}V)^{l}\overline{W'}(V\overline{V'})^{s}V\right]\right\}\right|^2, & \hbox{if}\;\; i=2l, k=2s+i+3
			\end{array}
			\right.\\
			&=&\left|\mbox{tr}\left[W(\overline{V'}V)^{i+1}\overline{V'}(V\overline{V'})^{k-i-2}\right]\right|^2.
		\end{eqnarray*}
		Therefore for any $0\neq V\in\mathscr{M}(m,n)$ and any $W\in\mathscr{M}(m,n)$, we have
		\begin{equation*}
			\sum_{i,a=1}^m\sum_{j,b=1}^n\frac{\partial^2 \pmb{f}^2}{\partial V_{ij}\partial \overline{V_{ab}}}W_{ij}\overline{W_{ab}}
			\geq\frac{1}{1+t}\left\{ \mbox{tr}\{W\overline{W'}\}+t\mathcal{A}^{\frac{1}{k}-1}\mbox{tr}\left[W\overline{W'}(V\overline{V'})^{k-1}\right]\right\}\geq 0,
		\end{equation*}
		and equality holds if and only if $W=0$. Hence $\pmb{f}$ is a complex Minkowski norm on $\mathscr{M}(m,n)$.
	\end{proof}
	
	\begin{remark}\label{rm-sc}
		It is clear that the complex Minkowski norm $\pmb{f}$ in Proposition \ref{prop-4-3} is strongly convex at least for sufficiently small $t>0$.
	\end{remark}

	\subsection{$\mbox{Aut}(\mathfrak{R}_I)$-invariant K\"ahler-Berwald metrics on $\mathfrak{R}_I$}\label{subsection-3.2}
	
	Note that a point $Z=(Z_{ij})\in \mathfrak{R}_I(m;n)$ is identified  with a point
	\begin{equation}
		z=(z_{11},\cdots,z_{1n},z_{21},\cdots,z_{2n},\cdots,z_{m1},\cdots,z_{mn})\in\mathbb{C}^{mn}\label{cI}
	\end{equation}
	by setting
	\begin{equation}
		z_{ij}:=Z_{ij},\quad \forall 1\leq i\leq m\;\mbox{and}\; 1\leq j\leq n. \label{cIa}
	\end{equation}
	A tangent vector $\pmb{V}=\displaystyle\sum_{i=1}^m\sum_{j=1}^nV_{ij}\frac{\partial}{\partial Z_{ij}}$ at $Z_0\in\mathfrak{R}_I$ is isomorphic to a matrix $V=(V_{ij})\in\mathscr{M}(m,n)$,
	which is also identified  with a vector
	\begin{equation}
		v=(v_{11},\cdots,v_{1n},v_{21},\cdots,v_{2n},\cdots,v_{m1},\cdots,v_{mn})\in\mathbb{C}^{mn}\label{vI}
	\end{equation}
	at $z_0\in\mathbb{C}^{mn}$ (which corresponds to $Z_0$) by setting
	\begin{equation}
		v_{ij}:=V_{ij}, \quad \forall 1\leq i\leq m\;\mbox{and}\;1\leq j\leq n.\label{vIa}
	\end{equation}
	
	In the following $(z;v)$ serves as the global complex coordinates on $T^{1,0}\mathfrak{R}_I\cong \mathfrak{R}_I\times \mathscr{M}(m,n)$. For convenience, we also denote $(Z;V)$  to mean the "complex coordinates" on $T^{1,0}\mathfrak{R}_I$ and write $V\in T_Z^{1,0}\mathfrak{R}_I$ to mean that $V$ is a tangent vector at $Z\in \mathfrak{R}_I$ in the sense of identifications \eqref{cI},\eqref{cIa} and \eqref{vI},\eqref{vIa}.
	
	Since $\mathfrak{R}_I$ is homogeneous. For any fixed point $Z_0\in\mathfrak{R}_I$, there exists $\varPhi_{Z_0}\in \mbox{Aut}(\mathfrak{R}_I)$ such that $\varPhi_{Z_0}(Z_0)=0$. Indeed, (cf. \cite{Hua},\cite{Look3})
	\begin{equation}
		W=\varPhi_{Z_0}(Z)=A(Z-Z_0)(I_n-\overline{Z_0'}Z)^{-1}D^{-1},\label{pI}
	\end{equation}
	where $A\in\mathscr{M}(m), D\in\mathscr{M}(n)$ satisfy
	\begin{equation}
		\overline{A'}A=(I_m-Z_0\overline{Z_0'})^{-1},\quad \overline{D'}D=(I_n-\overline{Z_0'}Z_0)^{-1}.\label{pIa}
	\end{equation}
	
	Note that for $m\neq n$, each element of $\mbox{Aut}(\mathfrak{R}_I)$ can be expressed in the form \eqref{pI}. For $m=n$,  we need to add another mapping
	\begin{equation}
		W=\varPhi_{Z_0}(Z')=A(Z'-Z_0)(I_n-\overline{Z_0'}Z')^{-1}D^{-1},\label{pI2}
	\end{equation}
	where $A\in\mathscr{M}(m), D\in\mathscr{M}(n)$ satisfy \eqref{pIa} (cf. \cite{Klingen1}).

	Thus the isotropic subgroup $ \mbox{Iso}(\mathfrak{R}_I) $ at the origin is given by
	\begin{equation}
		W=\left\{
		\begin{array}{ll}
			AZ\overline{D'}, & \hbox{for}\;m<n, \\
			AZ\overline{D'}\;\mbox{or}\;AZ'\overline{D'}, & \hbox{for}\;m=n,
		\end{array}
		\right.
		\label{isoI}
	\end{equation}
	where  $A\overline{A'}=I_m, D\overline{D'}=I_n$.
	
	By Theorem \ref{hfm}, in order to construct a strongly pseudoconvex complex Finsler metric on $\mathfrak{R}_I$, it suffices to construct a complex Minkowski norm on $T_0^{1,0}\mathfrak{R}_I\cong \mathscr{M}(m,n)$ which is $\mbox{Iso}(\mathfrak{R}_I)$-invariant.
	
	Define
	\begin{equation*}
		f_I^2(V):=(m+n)\pmb{f}^2(V),\quad \forall V\in T_0^{1,0}\mathfrak{R}_I\cong \mathscr{M}(m,n),
	\end{equation*}
	where $\pmb{f}^2(V)$ is given by \eqref{bf}.
	
	Let $A=(A_{ij})\in\mathscr{M}(m),\; B=(B_{ij})\in\mathscr{M}(n)$, the tensor product (also called Kronecker product) of $A$ and $B$ is (cf. \cite{Look3})
	\begin{equation*}
		A\otimes B:=(C_{(ab)(ij)})=\begin{pmatrix}
			A_{11}B & \cdots & A_{1m}B \\
			\vdots & \vdots & \vdots \\
			A_{m1}B & \cdots & A_{mm}B \\
		\end{pmatrix}\in\mathscr{M}(mn)
	\end{equation*}
	whose
	elements $C_{(ab)(ij)}$ at row $(ab)$ and column
	$(ij)$ are given by
	\begin{equation*}
		C_{(ab)(ij)}:=A_{ai}B_{bj}.
	\end{equation*}
	Then a complex linear mapping $W=AVB$ of $V=(V_{ij})\in T_0^{1,0}\mathfrak{R}_I$ can be identified with
	a complex linear mapping $w=v(A'\otimes B)$ of $v\in  \mathbb{C}^{mn}$, where $w$ and $v\in\mathbb{C}^{mn}$ are the vectors corresponding to $W$ and $V\in T_0^{1,0}\mathfrak{R}_I$, respectively.
	Notice that $A'\otimes B\in U(m)\otimes U(n)$ if and only if $A\in U(m)$ and $B\in U(n)$.
	Therefore, considered as a complex Minkowski norm on $\mathbb{C}^{mn}$,  Proposition \ref{prop-4-2} implies that $f_I^2$ is $U(m)\otimes U(n)$-invariant.

	The following proposition is due to Q. K. Lu \cite{Look3} (cf. page 313-314) whenever $z_0=0$.
	\begin{proposition}\label{L-I}
		Let $w=\phi_{z_0}(z)$ be the biholomorphic map associated to \eqref{pI}, where $z_0,z$ and $w$ are the corresponding complex coordinates of $Z_0,Z$ and $W$ in \eqref{pI}. Then
		\begin{eqnarray}
			(\phi_{z_0})_\ast (v)=v(A'\otimes \overline{D'}),\label{dIZ0}
		\end{eqnarray}
		where $(\phi_{z_0})_\ast$ denotes the differential of $\phi_{z_0}$ at $z_0$, $v$ is given by \eqref{vI} and $A,D$ satisfy \eqref{pIa}.
	\end{proposition}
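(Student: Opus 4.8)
The plan is to compute the differential $(\varPhi_{Z_0})_\ast$ directly from the explicit formula \eqref{pI} and then translate the resulting matrix-linear map into vector coordinates by means of the Kronecker product identification established just above the proposition.

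First I would fix $V\in T_{Z_0}^{1,0}\mathfrak{R}_I\cong\mathscr{M}(m,n)$ and differentiate $W=A(Z-Z_0)(I_n-\overline{Z_0'}Z)^{-1}D^{-1}$ along the curve $Z=Z_0+\tau V$ at $\tau=0$. Since $\varPhi_{Z_0}$ is holomorphic, $(\varPhi_{Z_0})_\ast(V)=\frac{d}{d\tau}\big|_{\tau=0}\varPhi_{Z_0}(Z_0+\tau V)$. By the product rule this derivative splits into two terms: the term in which $(Z-Z_0)$ is differentiated, with the remaining factors evaluated at $Z=Z_0$, and the term in which $(I_n-\overline{Z_0'}Z)^{-1}$ is differentiated, with $(Z-Z_0)$ evaluated at $Z=Z_0$. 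The latter is multiplied by $(Z-Z_0)\big|_{Z=Z_0}=0$ and hence vanishes, leaving
$$(\varPhi_{Z_0})_\ast(V)=A\,V\,(I_n-\overline{Z_0'}Z_0)^{-1}D^{-1}.$$

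Next I would simplify the right-hand side using the normalization \eqref{pIa}. Since $\overline{D'}D=(I_n-\overline{Z_0'}Z_0)^{-1}$, it follows that $(I_n-\overline{Z_0'}Z_0)^{-1}D^{-1}=\overline{D'}D D^{-1}=\overline{D'}$, so that $(\varPhi_{Z_0})_\ast(V)=A\,V\,\overline{D'}$. Finally, recalling that a complex linear map $W=AVB$ of $V=(V_{ij})\in\mathscr{M}(m,n)$ corresponds, under the identifications \eqref{vI}--\eqref{vIa}, to the complex linear map $w=v(A'\otimes B)$ of $v\in\mathbb{C}^{mn}$, taking $B=\overline{D'}$ yields $(\phi_{z_0})_\ast(v)=v(A'\otimes\overline{D'})$, which is precisely \eqref{dIZ0}. (For $m=n$, the additional automorphism \eqref{pI2} is treated by the same computation after pre-composing with the transposition $Z\mapsto Z'$.)

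The computation is essentially routine; the only points requiring care are the bookkeeping in the matrix-valued product rule---in particular verifying that the contribution from differentiating $(I_n-\overline{Z_0'}Z)^{-1}$ genuinely drops out because it is multiplied by $(Z-Z_0)\big|_{Z=Z_0}=0$---and keeping the transpose and conjugation conventions consistent when passing between the matrix picture $W=AVB$ and the vector picture $w=v(A'\otimes B)$.
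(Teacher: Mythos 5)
Your proposal is correct and follows essentially the same route as the paper: differentiate \eqref{pI} at $Z=Z_0$ (where the term from differentiating the inverse factor is killed by the vanishing of $Z-Z_0$), obtain $dW=A\,dZ\,\overline{D'}$ via \eqref{pIa}, and then pass to the vector picture through the Kronecker-product identification $W=AVB\leftrightarrow w=v(A'\otimes B)$. Your write-up is in fact slightly more explicit than the paper's on the product-rule bookkeeping and the simplification $(I_n-\overline{Z_0'}Z_0)^{-1}D^{-1}=\overline{D'}$, whereas the paper carries out the final identification by an index computation; both are fine.
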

	\begin{proof}
		Differentiating \eqref{pI}, and then setting $Z=Z_0$, we have
		\begin{equation}
			dW=AdZ\overline{D'},\label{d-Ia}
		\end{equation}
		where $A=(A_{ab})$ and $D=(D_{ij})$ satisfy \eqref{pIa}.  Substituting \eqref{cIa} into \eqref{d-Ia}, we have
		\begin{equation}
			dw_{ab}=\sum_{r=1}^m\sum_{s=1}^nA_{ar}dz_{rs}\overline{D_{bs}},
		\end{equation}
		from which it follows that
		$$
		\frac{\partial w_{ab}}{\partial z_{rs}}\Big|_{z=z_0}=A_{ar}\overline{D_{bs}}.
		$$
		Thus  if we setting $(\phi_{z_0})_\ast(v)=u=(u_{11},\cdots,u_{1n},u_{21},\cdots,u_{2n},\cdots,u_{m1},\cdots,u_{mn})$, we have
		\begin{equation*}
			u_{ab}=\sum_{r=1}^m\sum_{s=1}^nv_{rs}\frac{\partial w_{ab}}{\partial z_{rs}}=\sum_{r=1}^m\sum_{s=1}^nA_{ar}v_{rs}\overline{D_{bs}}=[v(A'\otimes \overline{D'})]_{ab},\label{tI}
		\end{equation*}
		where $[\cdots]_{ab}$ denotes the element on the row $a$ and column $b$ in the matrix $[\cdots]$.
	\end{proof}

	\begin{remark}\label{rm-I}
		If we denote $(\varPhi_{Z_0})_\ast(V)$ the matrix $U\in \mathscr{M}(m,n)$ which corresponds to the vector $u=(\phi_{z_0})_\ast(v)\in\mathbb{C}^{mn}$ under the identification \eqref{vI} and \eqref{vIa}, $\phi_{z_0}(z)$ be the biholomorphic map associated to \eqref{pI} or \eqref{pI2}. Then  by \eqref{dIZ0}, we have
		\begin{equation}
			(\varPhi_{Z_0})_\ast(V)=AV\overline{D'},\; or\; (\varPhi_{Z_0})_\ast(V)=AV'\overline{D'},\label{dIM}
		\end{equation}
		where $A$ and $D$ satisfy \eqref{pIa}.
		Thus it is safely to denote $(\varPhi_{Z_0})_\ast (V)$ instead of $(\phi_{z_0})_\ast(v)$.
		And we have
		\begin{eqnarray}
			\mathfrak{B}_l(0;(\varPhi_{Z_0})_\ast(V))&=&\mathfrak{B}_l(Z_0;V),\quad \forall V\in T_{Z_0}^{1,0}\mathfrak{R}_I,\label{Bz0}\\
			\mathcal{B}_{i,j}\big(0;(\varPhi_{Z_0})_\ast(V),(\varPhi_{Z_0})_\ast(W)\big)&=&\mathcal{B}_{i,j}(Z_0;V,W),\quad \forall V,W\in T_{Z_0}^{1,0}\mathfrak{R}_I.\label{Bz1} 	
		\end{eqnarray}
	\end{remark}

	\begin{proposition}
		$f_I^2$ is $\mbox{Iso}(\mathfrak{R}_I)$-invariant.
	\end{proposition}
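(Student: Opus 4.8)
The plan is to reduce the statement entirely to Proposition \ref{prop-4-2}. By the description \eqref{isoI} of $\mbox{Iso}(\mathfrak{R}_I)$, every element $g$ of the isotropy group at the origin has the form $Z\mapsto AZ\overline{D'}$ when $m<n$, and $Z\mapsto AZ\overline{D'}$ or $Z\mapsto AZ'\overline{D'}$ when $m=n$, where $A\overline{A'}=I_m$ and $D\overline{D'}=I_n$, that is, $A\in U(m)$ and $D\in U(n)$. Since each such $g$ is complex linear in $Z$, its differential at the origin is the same linear map, so $g_\ast(V)=AV\overline{D'}$ or $g_\ast(V)=AV'\overline{D'}$ for $V\in T_0^{1,0}\mathfrak{R}_I\cong\mathscr{M}(m,n)$; this also agrees with Proposition \ref{L-I} and Remark \ref{rm-I} specialized to $Z_0=0$.

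Next I would record the elementary fact that $\overline{D'}\in U(n)$ whenever $D\in U(n)$: for square matrices $D\overline{D'}=I_n$ is equivalent to $\overline{D'}D=I_n$, and since $\overline{(\overline{D'})'}=D$ we get $(\overline{D'})\,\overline{(\overline{D'})'}=\overline{D'}D=I_n$. Hence $B:=\overline{D'}$ is a legitimate choice of unitary matrix to which Proposition \ref{prop-4-2} applies.

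With these observations the conclusion is immediate. Applying \eqref{ftd} with $A\in U(m)$ and $B=\overline{D'}\in U(n)$ gives $\pmb{f}(AV\overline{D'})=\pmb{f}(V)$, and applying \eqref{ftd1} gives $\pmb{f}(AV'\overline{D'})=\pmb{f}(V)$. Squaring and multiplying by the constant $m+n$ yields $f_I^2(g_\ast(V))=(m+n)\pmb{f}^2(g_\ast(V))=(m+n)\pmb{f}^2(V)=f_I^2(V)$ for every $g\in\mbox{Iso}(\mathfrak{R}_I)$ and every $V\in T_0^{1,0}\mathfrak{R}_I$, which is exactly the asserted $\mbox{Iso}(\mathfrak{R}_I)$-invariance of $f_I^2$.

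There is essentially no obstacle here. The only points that warrant a brief line are the identification of $g_\ast$ at the origin with the linear map $V\mapsto AV\overline{D'}$ (resp. $V\mapsto AV'\overline{D'}$), together with the remark that $\overline{D'}$ is again unitary; everything else is a direct quotation of Proposition \ref{prop-4-2}.
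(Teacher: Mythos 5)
Your argument is correct and coincides with the paper's own proof: both identify the differential at the origin of an element of $\mbox{Iso}(\mathfrak{R}_I)$ with $V\mapsto AV\overline{D'}$ or $V\mapsto AV'\overline{D'}$ via \eqref{isoI} and Remark \ref{rm-I}, and then invoke Proposition \ref{prop-4-2}. Your extra remark that $\overline{D'}$ is again unitary is a small but welcome clarification that the paper leaves implicit.
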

	\begin{proof}
		For any $\varPhi\in\mbox{Iso}(\mathfrak{R}_I)$, by \eqref{isoI} and Remark \ref{rm-I}, we have
		$$
		\varPhi_\ast (V)=AV\overline{D'}\quad\mbox{or}\quad \varPhi_\ast (V)=AV'\overline{D'},\quad \forall V\in T_0^{1,0}\mathfrak{R}_I,
		$$
		where $A\in U(m)$ and $D\in U(n)$.
		Thus by Proposition \ref{prop-4-2},
		$$
		f_I^2(\varPhi_\ast(V))=f_I^2(V),\quad \forall V\in T_0^{1,0}\mathfrak{R}_I,
		$$
		which completes the proof.
	\end{proof}
	
	\begin{remark}For any $A\in U(m)$ and  $B\in U(n)$, it is easy to check that
		$$\mathfrak{B}_l(0;AVB)=\mathfrak{B}_l(0;V)\quad \mbox{and}\quad \mathcal{B}_{i,j}(0;V,W)=\mathcal{B}_{i,j}(0; AVB,AWB).$$
	\end{remark}
	
	\begin{theorem}\label{KB-FI}
		Let $\mathfrak{B}_l(Z;V)$ be defined  by \eqref{s12} for $Z\in \mathfrak{R}_I$ and $V\in T_Z^{1,0}\mathfrak{R}_I\cong \mathscr{M}(m,n)$. Then
		\begin{eqnarray}
			F_I^2(Z;V)&=&\frac{m+n}{1+t}\left\{\mathfrak{B}_1(Z;V)
			+t\sqrt[k]{\mathfrak{B}_k(Z;V)}\right\}\label{FI}
		\end{eqnarray}
		is a complete $\mbox{Aut}(\mathfrak{R}_I)$-invariant K\"ahler-Berwald metric for any fixed $t\in[0,+\infty),k\in\mathbb{N}$ and $k\geq 2$.
	\end{theorem}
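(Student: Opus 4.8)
The plan is to derive the statement from the general machinery assembled in Section \ref{section-2}: Theorem \ref{hfm} will take care of $\mbox{Aut}(\mathfrak{R}_I)$-invariance together with strong pseudoconvexity, Theorem \ref{th-2.1} will give the K\"ahler--Berwald property, and a pointwise comparison with the (complete) Bergman metric will give completeness. The first point is to recognize that \eqref{FI} is exactly the metric that Theorem \ref{hfm} produces from the norm $f_I=\sqrt{m+n}\,\pmb{f}$ at the origin. At $Z=0$ one has $(I-Z\overline{Z'})^{-1}=(I-\overline{Z'}Z)^{-1}=I$, so $\mathfrak{B}_l(0;V)=\mbox{tr}\{(V\overline{V'})^l\}$ and $F_I^2(0;V)=(m+n)\pmb{f}^2(V)=f_I^2(V)$ by \eqref{bf}; for arbitrary $Z_0\in\mathfrak{R}_I$, \eqref{Bz0} in Remark \ref{rm-I} gives $\mathfrak{B}_l(Z_0;V)=\mathfrak{B}_l\big(0;(\varPhi_{Z_0})_\ast(V)\big)$, hence $F_I^2(Z_0;V)=f_I^2\big((\varPhi_{Z_0})_\ast(V)\big)$, which is precisely the form \eqref{fz0v}. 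Since $f_I$ is a complex Minkowski norm by Proposition \ref{prop-4-3} (rescaled by the positive constant $m+n$) and is $\mbox{Iso}(\mathfrak{R}_I)$-invariant by the proposition established just above, Theorem \ref{hfm} shows that $F_I$ is a well-defined $\mbox{Aut}(\mathfrak{R}_I)$-invariant strongly pseudoconvex complex Finsler metric.

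With this in hand, by Theorem \ref{th-2.1} it remains only to verify the mixed-derivative condition $\partial^2 F_I^2/\partial z_{ij}\partial\overline{v_{ab}}\big|_{(0;V)}=0$ for every nonzero $V$. The key observation here is the local expansion around $Z=0$, namely $(I-Z\overline{Z'})^{-1}=I+Z\overline{Z'}+O(\|Z\|^4)$ and $(I-\overline{Z'}Z)^{-1}=I+\overline{Z'}Z+O(\|Z\|^4)$, in which the first correction already carries one factor $Z$ and one factor $\overline{Z}$ and is therefore quadratic. Consequently the holomorphic derivative $\partial/\partial Z_{ij}$ of each inverse matrix, and hence of $M(Z;V):=(I-Z\overline{Z'})^{-1}V(I-\overline{Z'}Z)^{-1}\overline{V'}$, vanishes at $Z=0$. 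Differentiating $\mathfrak{B}_1=\mbox{tr}\,M$, $\mathfrak{B}_k=\mbox{tr}\,M^k$, and $\mathfrak{B}_k^{1/k}$ (the last being legitimate because $\mathfrak{B}_k(0;V)=\mbox{tr}\{(V\overline{V'})^k\}>0$ for $V\neq 0$) then shows that $\partial F_I^2/\partial z_{ij}$ vanishes identically in $V$ at $Z=0$, so in particular \eqref{Gz} holds. Theorem \ref{th-2.1} thus gives that $F_I$ is a K\"ahler--Berwald metric.

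For completeness, note first that $t\sqrt[k]{\mathfrak{B}_k(Z;V)}\ge 0$: on $\mathfrak{R}_I$ one has $I-Z\overline{Z'}>0$ and $I-\overline{Z'}Z>0$, so the matrix inside \eqref{s12} is similar to a positive semidefinite one and all of its eigenvalues are nonnegative. Hence $F_I^2(Z;V)\ge\frac{m+n}{1+t}\mathfrak{B}_1(Z;V)=\frac{1}{1+t}\,ds_I^2$ pointwise, so the $F_I$-distance dominates $(1+t)^{-1/2}$ times the Bergman distance, which is complete; therefore $F_I$ is complete. (Alternatively, this is immediate from Remark \ref{RK1}, since \eqref{Gz} forces $F_I$ and the complete Bergman metric to share the same connection coefficients and hence the same geodesics.) The only genuinely computational step is the second one, and within it the only delicate part is carrying the matrix differentiations through the nonlinear term $\mathfrak{B}_k^{1/k}$ while keeping everything evaluated at $Z=0$; the remaining two steps are direct applications of the results already in place.
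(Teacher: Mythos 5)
Your proposal is correct and follows essentially the same route as the paper: invoke Theorem \ref{hfm} with the $\mbox{Iso}(\mathfrak{R}_I)$-invariant Minkowski norm $f_I=\sqrt{m+n}\,\pmb{f}$ and the identity \eqref{Bz0} to obtain invariance and strong pseudoconvexity, verify \eqref{Gz} at the origin to apply Theorem \ref{th-2.1}, and conclude completeness via Remark \ref{RK1}. Your explicit justification that $\partial F_I^2/\partial z_{ij}$ vanishes identically in $V$ at $Z=0$ (so that the mixed derivative in \eqref{Gz} vanishes) fills in the step the paper labels "easy to check," and is consistent with the computation \eqref{A-k-a}--\eqref{B-l0} carried out later in Theorem \ref{C-FI}.
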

	
	\begin{proof}Let $Z_0$ be any  fixed point in $\mathfrak{R}_{I}$,
		define
		\begin{equation}
			F_I^2(Z_0;V):=f_I^2((\varPhi_{Z_0})_\ast(V)),\quad \forall V\in T_{Z_0}^{1,0}\mathfrak{R}_I.\label{Ie}
		\end{equation}
		By Proposition \ref{prop-4-3},
		$F_I^2(Z_0;V)$ is $\mbox{Aut}(\mathfrak{R}_I)$-invariant.
		
		Substituting \eqref{Bz0} into \eqref{Ie}, we obtain
		\begin{eqnarray*}
			F_I^2(Z_0;V)=\frac{m+n}{1+t}\{\mathfrak{B}_1(Z_0;V)+t\sqrt[k]{\mathfrak{B}_k(Z_0;V)}\}.
		\end{eqnarray*}
		Since $Z_0\in\mathfrak{R}_I$ is any fixed point, changing $Z_0$ to $Z$ if necessary, we obtain \eqref{FI}. Thus by Theorem \ref{hfm},
		$F_I$ is an $\mbox{Aut}(\mathfrak{R}_I)$-invariant strongly pseudoconvex complex Finsler metric for any fixed $t\in[0,+\infty),k\in\mathbb{N}$ and $k\geq 2$.

		Next it is easy to check that
		\begin{equation*}
			\frac{\partial^2F_I^2}{\partial z_{ij}\partial\overline{v_{ab}}}\Big|_{(0;v)}=
			\frac{\partial^2F_I^2}{\partial Z_{ij}\partial\overline{V_{ab}}}\Big|_{(0;V)}=0.\label{F_zv-0-1}
		\end{equation*}
		Thus by Theorem \ref{th-2.1} and Remark \ref{RK1}, $F_I$ is a complete K\"ahler-Berwald metric on $\mathfrak{R}_I$ which completes the proof.
	\end{proof}
	
	\begin{theorem}\label{C-FI}
		Let $F_I$ be defined by \eqref{FI}, then $F_{I}$ has negative holomorphic sectional curvature and nonpositive holomorphic bisectional curvature.
		More precisely,
		the holomorphic sectional curvature $K_I$ and the holomorphic bisectional
		curvature $B_I$ of $F_I$ are given respectively by
		\begin{eqnarray}
			K_I(Z;V)&=&\frac{-4(m+n)}{1+t}\frac{\mathfrak{B}_2(Z;V)
				+t\mathfrak{B}_k^{\frac{1}{k}-1}(Z;V)\mathfrak{B}_{k+1}(Z;V)}{F_{I}^4(Z;V)},\label{hsc-F-I}\\
			B_{I}(Z; V, W)
			&=&\frac{-2(m+n)}{(1+t)F_{I}^2(Z;V)F_I^2(Z;W)}
			\Big\{\mathcal{B}_{1,1}(Z;V,W)+\mathcal{B}_{1,1}(\overline{Z'};\overline{V'},\overline{W'})\nonumber\\
			&+&t{\mathfrak{B}_{k}^{\frac{1}{k}-1}(Z;V)}
			\left[\mathcal{B}_{k,1}(Z;V,W)+\mathcal{B}_{k,1}(\overline{Z'};\overline{V'},\overline{W'})\right]\Big\},\label{hbsc-F-I}
		\end{eqnarray}
		where $\mathfrak{B}_l(Z;V)$ and $\mathcal{B}_{i,j}(Z;V)$ are defined by \eqref{s12} and \eqref{s13} respectively for $Z\in\mathfrak{R}_I$ and nonzero vectors $V,W\in T_Z^{1,0}\mathfrak{R}_I$.
	\end{theorem}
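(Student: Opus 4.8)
The plan is to reduce the curvature computation at an arbitrary point $Z\in\mathfrak{R}_I$ to a computation at the origin, using the $\mbox{Aut}(\mathfrak{R}_I)$-invariance of both the holomorphic sectional curvature \eqref{hsca} and the holomorphic bisectional curvature \eqref{hbsca} established in the remark following their definitions, together with the invariance relations \eqref{Bz0} and \eqref{Bz1} for $\mathfrak{B}_l$ and $\mathcal{B}_{i,j}$. Thus it suffices to prove \eqref{hsc-F-I} and \eqref{hbsc-F-I} at $Z=0$. At the origin, since $F_I$ is a K\"ahler-Berwald metric by Theorem \ref{KB-FI} and, moreover, $\varGamma_{;i}^l(0;V)=0$ (which follows from $\frac{\partial^2F_I^2}{\partial Z_{ij}\partial\overline{V_{ab}}}|_{(0;V)}=0$ as noted in the proof of Theorem \ref{KB-FI}), Proposition \ref{hsc} and Proposition \ref{bhsc} apply directly: the curvatures at $0$ are given by the $(2,0)$-in-$Z$ Hessian of $F_I^2$ contracted appropriately,
\begin{equation*}
	K_I(0;V)=-\frac{2}{F_I^4(0;V)}\sum_{i,j}\frac{\partial^2F_I^2}{\partial z_i\partial\overline{z_j}}(0;V)v_i\overline{v_j},\qquad
	B_I(0;V,W)=-\frac{2}{F_I^2(0;V)F_I^2(0;W)}\sum_{i,j}\frac{\partial^2F_I^2}{\partial z_i\partial\overline{z_j}}(0;V)w_i\overline{w_j}.
\end{equation*}

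The core of the proof is therefore the explicit computation of the second-order term in the Taylor expansion of $F_I^2(Z;V)=\frac{m+n}{1+t}\{\mathfrak{B}_1(Z;V)+t\,\mathfrak{B}_k(Z;V)^{1/k}\}$ in the base variable $Z$ around $Z=0$, with $V$ held fixed. I would expand $(I-Z\overline{Z'})^{-1}=I+Z\overline{Z'}+O(|Z|^4)$ and $(I-\overline{Z'}Z)^{-1}=I+\overline{Z'}Z+O(|Z|^4)$, substitute into \eqref{s12}, and collect the terms bilinear in $Z$ and $\overline{Z}$. For $\mathfrak{B}_1$ this gives, schematically,
\begin{equation*}
	\mathfrak{B}_1(Z;V)=\mbox{tr}\{V\overline{V'}\}+\mbox{tr}\{Z\overline{Z'}V\overline{V'}\}+\mbox{tr}\{V\overline{Z'}ZV'... \}+\cdots,
\end{equation*}
where the mixed quadratic-in-$Z$ part, after taking $\partial^2/\partial z_i\partial\overline{z_j}$ and contracting against $v_i\overline{v_j}$ (resp.\ $w_i\overline{w_j}$), produces exactly $\mathfrak{B}_2(Z;V)$ in the holomorphic sectional case and the symmetrized pair $\mathcal{B}_{1,1}(Z;V,W)+\mathcal{B}_{1,1}(\overline{Z'};\overline{V'},\overline{W'})$ in the bisectional case — the two summands reflecting the two ways the factor $\overline{W'}W$ or $W\overline{W'}$ can be inserted between the $(I-Z\overline{Z'})^{-1}$ blocks. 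For the term $t\,\mathfrak{B}_k(Z;V)^{1/k}$ I would use the chain rule: since $\mathfrak{B}_k(0;V)=\mbox{tr}\{(V\overline{V'})^k\}$ and the first-order $Z$-derivative of $\mathfrak{B}_k$ vanishes at $0$, the Hessian of $\mathfrak{B}_k^{1/k}$ at $0$ equals $\frac{1}{k}\mathfrak{B}_k(0;V)^{1/k-1}$ times the Hessian of $\mathfrak{B}_k$, which by the same expansion yields (after the combinatorial count of $k$ positions for inserting $Z\overline{Z'}$, each contributing equally by cyclicity of trace) the factor $\mathfrak{B}_{k+1}$ and $\mathcal{B}_{k,1}$ terms with the coefficient $t\,\mathfrak{B}_k^{1/k-1}$ appearing in the statement; the $\frac{1}{k}\times k$ cancellation is what removes the apparent $1/k$.

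The main obstacle I anticipate is bookkeeping: correctly identifying which quadratic-in-$Z$ contributions survive and matching them to the traces $\mathfrak{B}_2,\mathfrak{B}_{k+1},\mathcal{B}_{1,1},\mathcal{B}_{k,1}$, in particular getting the symmetrization $\mathcal{B}_{i,1}(Z;V,W)+\mathcal{B}_{i,1}(\overline{Z'};\overline{V'},\overline{W'})$ right in the bisectional formula and checking that the asymmetry between $V$ and $W$ there is genuine (it comes from $\mathfrak{B}_k^{1/k-1}(Z;V)$ depending only on $V$, since the horizontal connection is evaluated along $V$). I would organize this by first treating $t=0$ (where $F_I^2$ is just $(m+n)\mathfrak{B}_1$, a Hermitian metric, recovering the known Bergman-metric curvature of $\mathfrak{R}_I$ as a sanity check), then handling the $\mathfrak{B}_k^{1/k}$ correction by the chain-rule argument above. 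The sign (negativity of $K_I$, nonpositivity of $B_I$) is immediate once the formulas are established: $\mathfrak{B}_2,\mathfrak{B}_{k+1}>0$ for $V\neq0$ since they are traces of positive powers of positive semidefinite matrices with nonzero trace, and the $\mathcal{B}_{1,1},\mathcal{B}_{k,1}$ terms are nonnegative because each $\mathcal{B}_{i,j}(Z;V,W)=\mbox{tr}\{P^iQ^j\}$ with $P,Q\succeq0$ is nonnegative (e.g.\ $\mbox{tr}\{P^iQ^j\}=\mbox{tr}\{P^{i/2}Q^{j/2}\cdot Q^{j/2}P^{i/2}\}\ge0$ when $i,j$ are handled by splitting as in the proof of Proposition \ref{prop-4-3}); the precise curvature bounds (3) and (4) are deferred to the later theorems cited in the statement.
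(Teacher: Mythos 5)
Your proposal follows essentially the same route as the paper: reduce to the origin by $\mbox{Aut}(\mathfrak{R}_I)$-invariance, apply Propositions \ref{hsc} and \ref{bhsc} (valid since $\varGamma_{;i}^l(0;V)=0$), compute the Hessian of $\mathfrak{B}_l$ in $Z$ at $Z=0$ (the paper does this by direct differentiation of \eqref{s12}, which is the same computation as your Taylor expansion of the resolvents), and handle the $\mathfrak{B}_k^{1/k}$ term by the chain rule with the $\tfrac{1}{k}\times k$ cancellation. The only slip is a factor of $2$ in your schematic for the sectional case: the contraction of the Hessian of $\mathfrak{B}_l$ against $v\otimes\bar v$ yields $2l\,\mathfrak{B}_{l+1}$ (the two summands of the symmetrized bisectional pair coincide when $W=V$), which is exactly what produces the coefficient $-4$ in \eqref{hsc-F-I} — a bookkeeping point you already flagged.
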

	
	\begin{proof}
		Since in this case we have $z_{ij}=Z_{ij}$ and $v_{ij}=V_{ij}$ for $i=1,\cdots,m$ and $j=1,\cdots,n$. Differentiating \eqref{s12} with respect to $z_{ab}$, we get
		\begin{eqnarray}
			\frac{\partial\mathfrak{B}_l}{\partial z_{ab}}
			&=&l\mbox{tr}\Big\{\left[(I-Z\overline{Z'})^{-1}V(I-\overline{Z'}Z)^{-1}\overline{V'}\right]^{l-1}
			(I-Z\overline{Z'})^{-1}[E_{ab}\overline{Z'}(I-Z\overline{Z'})^{-1}V\nonumber\\
			&&+V(I-\overline{Z'}Z)^{-1}\overline{Z'}E_{ab}](I-\overline{Z'}Z)^{-1}\overline{V'}\Big\},\label{A-k-a}
		\end{eqnarray}
		from which we get
		\begin{equation}
			\frac{\partial\mathfrak{B}_l}{\partial z_{ab}}(0;V)=0,\quad \frac{\partial\mathfrak{B}_l}{\partial \overline{z_{cd}}}(0;V)=0,\quad \forall l=1,2,\cdots.\label{B-l0}
		\end{equation}
		Differentiating \eqref{A-k-a} again with respect to $\overline{z_{cd}}$ and then setting $Z=0$, we get
		\begin{eqnarray}
			\frac{\partial^2\mathfrak{B}_l}{\partial z_{ab}\partial \overline{z_{cd}}}(0;V)
			&=&l\mbox{tr}\left\{\left[V\overline{V'}\right]^{l-1}
			[E_{ab}E_{dc}V+VE_{dc}E_{ab}]\overline{V'}\right\},\quad \forall l=1,2,\cdots.\label{B-ij0}
		\end{eqnarray}
		Thus by Proposition \ref{hsc}, \eqref{B-l0} and \eqref{B-ij0}, for any nonzero tangent vector $V\in T_0^{1,0}\mathfrak{R}_{II}$, we have
		$$K_I(0;V)=\frac{-2(m+n)}{1+t}\frac{1}{F_{I}^4(0;V)}\sum_{a,c=1}^{m}\sum_{b,d=1}^{n}
		\Big\{\frac{\partial^2 \mathfrak{B}_1}{\partial z_{ab}\partial \overline{z_{cd}}}(0;V)
		+\frac{t}{k}\mathfrak{B}_k^{\frac{1}{k}-1}(0;V)\frac{\partial^2\mathcal{B}_k}{\partial z_{ab}\partial \overline{z_{cd}}}(0;V)\Big\}v_{ab}\overline{v_{cd}}.
		$$
		Using \eqref{B-ij0}, it is easy to check that
		\begin{eqnarray*}
			\sum_{a,c=1}^{m}\sum_{b,d=1}^{n}
			\frac{\partial^2 \mathfrak{B}_l}{\partial z_{ab}\partial \overline{z_{cd}}}(0;V)v_{ab}\overline{v_{cd}}
			&=&2l\mathfrak{B}_{l+1}(0;V).
		\end{eqnarray*}
		Thus we have
		\begin{eqnarray}
			K_I(0;V)=-\frac{4(m+n)}{1+t}\frac{1}{F_I^4(0;V)}\left\{\mathfrak{B}_2(0;V)
			+t\mathfrak{B}_k^{\frac{1}{k}-1}(0;V)\mathfrak{B}_{k+1}(0;V)\right\}.
			\label{hsc-b1}
		\end{eqnarray}
		Since $\mathfrak{R}_I$ is transitively with respect to $\mbox{Aut}(\mathfrak{R}_I)$, and the holomorphic sectional curvature is $\mbox{Aut}(\mathfrak{R}_I)$-invariant,
		by \eqref{Bz0}, for any nonzero tangent vector $V\in T_Z^{1,0}\mathfrak{R}_I$ at $Z\in\mathfrak{R}_I$, we have \eqref{hsc-F-I}.
		Notice that $\mathfrak{B}_{l}(Z;V)>0$,  we always have $K_I(Z;V)<0$.
		
		By Proposition \ref{bhsc} and \eqref{B-ij0}, for any nonzero tangent vectors $V,W\in T_0^{1,0}\mathfrak{R}_{I}$, we have
		$$
		B_{I}(0;V,W)=\frac{-2(m+n)}{1+t}\frac{\displaystyle\sum_{a,c=1}^{m}\sum_{b,d=1}^{n}
			\left\{\frac{\partial^2 \mathfrak{B}_1}{\partial z_{ab}\partial \overline{z_{cd}}}(0;V)
			+\frac{t}{k}\mathfrak{B}_k^{\frac{1}{k}-1}\frac{\partial^2\mathfrak{B}_k}{\partial z_{ab}\partial \overline{z_{cd}}}(0;V)\right\}w_{ab}\overline{w_{cd}}}{F_I^2(0;V)F_I^2(0,W)}.
		$$
		Using \eqref{B-ij0}, it is easy to check that
		\begin{eqnarray*}
			\sum_{a,c=1}^{m}\sum_{b,d=1}^{n}\frac{\partial^2\mathfrak{B}_l}{\partial z_{ab}\partial \overline{z_{cd}}}(0;V)w_{ab}\overline{w_{cd}}
			&=&l\left[\mathcal{B}_{l,1}(0;V,W)+\mathcal{B}_{l,1}(0;\overline{V'},\overline{W'})\right].
		\end{eqnarray*}
		So that
		\begin{eqnarray*}
			B_{I}(0;V,W)
			&=&\frac{-2(m+n)}{(1+t)F_I^2(0;V)F_I^2(0;W)}\Big\{\mathcal{B}_{1,1}(0;V,W)+\mathcal{B}_{1,1}(0;\overline{V'},\overline{W'})\\
			&+&t{\mathfrak{B}_{k}^{\frac{1}{k}-1}(0;V)}
			\left[\mathcal{B}_{k,1}(0;V,W)+\mathcal{B}_{k,1}(0;\overline{V'},\overline{W'})\right]\Big\}.
		\end{eqnarray*}
		Since the holomorphic bisectional curvature is holomorphic invariant, by \eqref{Bz0} and \eqref{Bz1},		
		it follows that for any nonzero tangent vectors $V,W\in T_Z^{1,0}\mathfrak{R}_I$ we have \eqref{hbsc-F-I}.
	\end{proof}

	\subsection{$\mbox{Aut}(\mathfrak{R}_{II})$-invariant K\"ahler-Berwald metrics on $\mathfrak{R}_{II}$}\label{subsection-3.3}
	
	In this subsection, we shall construct $\mbox{Aut}(\mathfrak{R}_{II})$-invariant strongly pseudoconvex complex Finsler metrics on $\mathfrak{R}_{II}$. Note that $\mathfrak{R}_{II}$ is a complex submanifold of $\mathfrak{R}_I$ and $T_0^{1,0}\mathfrak{R}_{II}$ is a complex subspace of $T_0^{1,0}\mathfrak{R}_I$. Intuitively, the restriction of the complex Minkowski norm $f_I^2$ to the holomorphic tangent space $T_0^{1,0}\mathfrak{R}_{II}$ leads to a strongly pseudoconvex complex Minkowski norm on $T_0^{1,0}\mathfrak{R}_{II}$, which will be denote by $f_{II}^2$ in the following.

	Note that a point $Z=(Z_{ij})\in\mathfrak{R}_{II}(p)$ is
	identified with a point
	\begin{equation}
		z=(z_{11},\cdots,z_{1p},z_{22},\cdots,z_{2p},\cdots,z_{p-1\,p},z_{pp})\in\mathbb{C}^{\frac{p(p+1)}{2}}\label{c-II}
	\end{equation}
	by setting
	\begin{equation}
		Z_{ji}=Z_{ij}:=\frac{z_{ij}}{\sqrt{2}p_{ij}},\quad p_{ij}=\left\{
		\begin{array}{ll}
			\frac{1}{\sqrt{2}} & \hbox{for}\,i=j, \\
			1, & \hbox{for}\,i<j
		\end{array}
		\right.\quad\mbox{and},\quad \forall 1\leq i\leq j\leq p.\label{c-IIa}
	\end{equation}
	
	A tangent vector
	\begin{equation}
		v=(v_{11},\cdots,v_{1p},v_{22},\cdots,v_{2p},\cdots,v_{p-1\,p},v_{pp})\in\mathbb{C}^{\frac{p(p+1)}{2}}\label{v-II}
	\end{equation}
	at $z_0$ (which corresponds to $Z_0\in\mathfrak{R}_{II}$) can be identified with a symmetric matrix
	\begin{equation}
		V=(V_{ij})\in\mathscr{M}(p)\quad \mbox{with}\quad V_{ji}=V_{ij}:=\frac{v_{ij}}{\sqrt{2}p_{ij}},\quad \forall 1\leq i\leq j\leq p,\label{v-IIa}
	\end{equation}
	where  $p_{ij}$  are defined in \eqref{c-IIa}. Thus $T_0^{1,0}\mathfrak{R}_{II}$ may be identified with a complex subspace of $\mathscr{M}(p)$, which consists of matrices $V$ satisfying \eqref{v-IIa}.
	
	Since $\mathfrak{R}_{II}$ is homogeneous, for any fixed point $Z_0\in\mathfrak{R}_{II}$, there exists a map $\varPhi_{Z_0}\in\mbox{Aut}(\mathfrak{R}_{II})$ such that $\varPhi_{Z_0}(Z_0)=0$. Indeed, (cf. \cite{Hua}, \cite{Look3})
	\begin{equation}
		W=\varPhi_{Z_0}(Z)=A(Z-Z_0)(I_p-\overline{Z_0}Z)^{-1}{\overline{A}}^{-1}\,\label{p-II}
	\end{equation}
	where$A\in\mathscr{M}(p)$ satisfies
	\begin{equation}
		\overline{A'}A=(I_p-Z_0\overline{Z_0})^{-1}.\label{p-IIa}
	\end{equation}
	Note that these mappings give the $\mbox{Aut}(\mathfrak{R}_{II})$ (\cite{Siegel1}).

	\begin{definition}Let $t\in[0,+\infty),k\in\mathbb{N}$ and $k\geq 2$. Define
		\begin{equation}
			f_{II}^2(V):=(p+1)\pmb{f}^2(V),\quad \forall V\in T_0^{1,0}\mathfrak{R}_{II}\cong\{V=V'|V\in\mathscr{M}(p)\},\label{f-II}
		\end{equation}
		where $\pmb{f}^2(V)$ is given by \eqref{bf}.
	\end{definition}
	
	Let $A=(A_{ab})\in\mathscr{M}(p)$, the symmetric tensor product of $A$ is a matrix $A\otimes_s A=(C_{(ab)(ij)})\in\mathscr{M}\left(\frac{p(p+1)}{2}\right)$
	whose elements $C_{(ab)(ij)}$ at row $(ab) (a\leq b)$ and column $(ij) (i\leq j)$ are given by \cite{Look3}
	\begin{equation}
		C_{(ab)(ij)}:=p_{ab}p_{ij}(A_{ai}A_{bj}+A_{aj}A_{bi}),\quad a\leq b\;\mbox{and}\; i\leq j.
	\end{equation}
	
	The following proposition is due to Q. K. Lu (cf. \cite{Look3}, page 314-316) whenever $z_0=0$.
	\begin{proposition}
		Let $w=\phi_{z_0}(z)$ be the biholomorphic map associated to \eqref{p-II}, where $z_0,z$ and $w$ are the corresponding complex coordinates of $Z_0,Z$ and $W$ in \eqref{p-II}, respectively. Then
		\begin{eqnarray}
			(\phi_{z_0})_\ast (v)=v(A'\otimes_s A'),\label{d-IIZ0}
		\end{eqnarray}
		where $(\phi_{z_0})_\ast$ denotes the differential of $\phi_{z_0}$ at $z_0$, $v$ is given by \eqref{v-II} and $A$ satisfies \eqref{p-IIa}.
	\end{proposition}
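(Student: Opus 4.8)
The plan is to follow verbatim the strategy of the proof of Proposition \ref{L-I}: differentiate the explicit automorphism \eqref{p-II}, evaluate its holomorphic Jacobian at the base point $Z_0$, and then read off the induced action on tangent vectors in the vector coordinates \eqref{v-II} prescribed by the identification \eqref{v-IIa}.

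First I would differentiate $\varPhi_{Z_0}(Z)=A(Z-Z_0)(I_p-\overline{Z_0}Z)^{-1}\overline{A}^{-1}$. Every term arising from differentiating $(I_p-\overline{Z_0}Z)^{-1}$ carries the left factor $(Z-Z_0)$, which vanishes at $Z=Z_0$, so
\[
dW\big|_{Z=Z_0}=A\,dZ\,(I_p-\overline{Z_0}Z_0)^{-1}\overline{A}^{-1}.
\]
Next I would simplify the constant matrix on the right. Taking complex conjugates in the normalization \eqref{p-IIa} gives $A'\overline{A}=(I_p-\overline{Z_0}Z_0)^{-1}$, hence $(I_p-\overline{Z_0}Z_0)^{-1}\overline{A}^{-1}=A'$, and therefore
\[
dW\big|_{Z=Z_0}=A\,dZ\,A'.
\]
Thus $(\varPhi_{Z_0})_\ast$ sends a tangent vector $V\in T_{Z_0}^{1,0}\mathfrak{R}_{II}$ (a symmetric matrix) to the symmetric matrix $AVA'$. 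This already reduces the statement to the case $z_0=0$ treated by Lu, since for $z_0=0$ the map \eqref{p-II} is literally $Z\mapsto AZA'$.

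It then remains to translate the identity $(\varPhi_{Z_0})_\ast(V)=AVA'$ into the vector coordinates. Using \eqref{v-IIa} in the form $v_{ij}=\sqrt2\,p_{ij}V_{ij}$, one gets $u_{ab}=p_{ab}\sum_{r,s}p_{rs}^{-1}A_{ar}A_{bs}\,v_{rs}$ for $u:=(\varPhi_{Z_0})_\ast(v)$, where $v_{rs}$ and $p_{rs}$ are extended symmetrically in the indices. I would split $\sum_{r,s}$ into the ranges $r<s$, $r=s$, $r>s$: the two off-diagonal ranges symmetrize into $\sum_{i<j}(A_{ai}A_{bj}+A_{aj}A_{bi})v_{ij}$, while the diagonal range, weighted by $p_{rr}^{-1}=\sqrt2$, supplies precisely the missing contribution, so that the total equals $\sum_{i\le j}v_{ij}\,p_{ij}p_{ab}(A_{ai}A_{bj}+A_{aj}A_{bi})=[v(A'\otimes_s A')]_{ab}$, the last equality being the definition of the symmetric tensor product (recall $(A')_{ia}=A_{ai}$). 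This gives \eqref{d-IIZ0}.

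The one step that needs care is this last one: the weights $\sqrt2\,p_{ij}$ built into the symmetric-matrix coordinates must be tracked so that the diagonal and off-diagonal entries reassemble into the single symmetrized expression defining $A'\otimes_s A'$. Everything else is formal; once $dW=A\,dZ\,A'$ is in hand, the passage to coordinates is exactly the computation underlying Lu's $z_0=0$ statement (cf. \cite{Look3}), the only genuinely new ingredient being the identity $(I_p-\overline{Z_0}Z_0)^{-1}\overline{A}^{-1}=A'$, which disposes of an arbitrary base point $Z_0$.
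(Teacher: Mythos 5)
Your proposal is correct and follows essentially the same route as the paper: differentiate \eqref{p-II}, observe that the terms involving the derivative of $(I_p-\overline{Z_0}Z)^{-1}$ are killed by the factor $Z-Z_0$ at $Z=Z_0$, obtain $dW=A\,dZ\,A'$, and then convert to the vector coordinates \eqref{v-II} by tracking the weights $p_{ij}$ to recognize $A'\otimes_s A'$. The only difference is cosmetic: you make explicit the identity $(I_p-\overline{Z_0}Z_0)^{-1}\overline{A}^{-1}=A'$ coming from conjugating \eqref{p-IIa}, a step the paper leaves implicit when it writes $dW=AdZA'$.
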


	\begin{proof}
		Differentiating \eqref{p-II}, and then setting $Z=Z_0$, we have
		\begin{equation}
			dW=AdZA',\label{d-IIa}
		\end{equation}
		where $A=(A_{ab})$ and $D=(D_{ij})$ satisfy \eqref{p-IIa}. Substituting \eqref{c-IIa} into \eqref{d-IIa}, we have
		\begin{eqnarray*}
			dw_{ab}&=&\sqrt{2}p_{ab}\sum_{r,s=1}^nA_{ar}\frac{dz_{rs}}{\sqrt{2}p_{rs}}A_{bs}\\
			&=&p_{ab}\left[\sum_{r<s}dz_{rs}(A_{ar}A_{bs}+A_{as}A_{br})+\sum_{r=s}\sqrt{2}dz_{rs}A_{ar}A_{bs}\right]\\
			&=&\sum_{r\leq s}dz_{rs}p_{ab}p_{rs}(A_{ar}A_{bs}+A_{as}A_{br}),
		\end{eqnarray*}
		from which it follows that
		$$
		\frac{\partial w_{ab}}{\partial z_{rs}}\Big|_{z=z_0}=p_{ab}p_{rs}(A_{ar}A_{bs}+A_{as}A_{br}).
		$$
		Thus  if we setting $(\phi_{z_0})_\ast(v)=u=(u_{11},u_{12},\cdots,u_{1n},u_{22},u_{23},\cdots,u_{2n},\cdots,u_{n-1\,n},u_{nn})\in\mathbb{C}^{\frac{n(n+1)}{2}}$, we have
		\begin{equation}
			u_{ab}=\sum_{r\leq s}v_{rs}\frac{\partial w_{ab}}{\partial z_{rs}}
			=\sum_{r\leq s}v_{rs}p_{ab}p_{rs}(A_{ar}A_{bs}+A_{as}A_{br})=[v(A'\otimes_sA')]_{ab}.\label{t-II}
		\end{equation}
	\end{proof}
	
	\begin{remark}
		If we denote $(\varPhi_{Z_0})_\ast(V)$ the symmetric matrix in $\mathscr{M}(p)$ which corresponds to the vector
		$(\phi_{z_0})_\ast(v)\in\mathbb{C}^{\frac{p(p+1)}{2}}$ under the identification \eqref{v-II} and \eqref{v-IIa}. Then we have
		\begin{equation}
			(\varPhi_{Z_0})_\ast(V)=AVA',\label{d-IM}
		\end{equation}
		where $A$ satisfies \eqref{p-IIa}.
	\end{remark}
	
	\begin{proposition}
		$f_{II}^2$ is an $\mbox{Iso}(\mathfrak{R}_{II})$-invariant complex Minkowski norm for any fixed $t\in[0,+\infty)$ and $k\in\mathbb{N},k\geq 2$.
	\end{proposition}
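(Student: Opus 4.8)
The plan is to reduce the whole statement to two facts already established for the larger space $\mathscr{M}(p)$, namely Proposition \ref{prop-4-3} (that $\pmb{f}$ is a complex Minkowski norm on $\mathscr{M}(m,n)$, here with $m=n=p$) and Proposition \ref{prop-4-2} (that $\pmb{f}(AVB)=\pmb{f}(V)$ for unitary $A,B$). The key observation is that, in the global fiber coordinates $v=(v_{ij})_{i\le j}\in\mathbb{C}^{p(p+1)/2}$ on $T_0^{1,0}\mathfrak{R}_{II}$, the metric $f_{II}^2=(p+1)\pmb{f}^2$ is nothing but the composition of $(p+1)\pmb{f}^2$ with the $\mathbb{C}$-linear embedding $\iota\colon v\mapsto V=(V_{ij})$ of \eqref{v-IIa} onto the subspace of symmetric matrices in $\mathscr{M}(p)$.

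For the assertion that $f_{II}^2$ is a complex Minkowski norm, smoothness on $T_0^{1,0}\mathfrak{R}_{II}\setminus\{0\}$ and the homogeneity $f_{II}(\lambda v)=|\lambda|f_{II}(v)$ follow at once from Proposition \ref{prop-4-3} and the linearity of $\iota$. The only point needing attention is strong pseudoconvexity, and here I would simply differentiate the composition: by the chain rule, for $v\ne 0$ and an arbitrary direction $w$, setting $W:=\iota(w)$,
\[
\sum_{i\le j,\,a\le b}\frac{\partial^2 f_{II}^2}{\partial v_{ij}\partial\overline{v_{ab}}}(v)\,w_{ij}\overline{w_{ab}}
=(p+1)\sum_{i,a=1}^{p}\sum_{j,b=1}^{p}\frac{\partial^2 \pmb{f}^2}{\partial V_{ij}\partial\overline{V_{ab}}}(\iota(v))\,W_{ij}\overline{W_{ab}}.
\]
Since $\iota$ is injective, $w\ne 0$ forces $W\ne 0$, so the right-hand side is strictly positive by Proposition \ref{prop-4-3}; thus the complex Hessian of $f_{II}^2$ in the $v$-coordinates is positive definite off the zero section. (Equivalently, restricting a strongly pseudoconvex norm to a complex subspace produces a principal submatrix of the complex Hessian, which remains positive definite.)

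For $\mbox{Iso}(\mathfrak{R}_{II})$-invariance, I would use \eqref{p-II}--\eqref{p-IIa} with $Z_0=0$ together with \eqref{d-IM}: every $\varPhi\in\mbox{Iso}(\mathfrak{R}_{II})$ acts on $T_0^{1,0}\mathfrak{R}_{II}$ by $\varPhi_\ast(V)=AVA'$ with $A\in U(p)$, and $AVA'$ is again symmetric, so this is a genuine action on $T_0^{1,0}\mathfrak{R}_{II}$. Since $A\in U(p)$ implies $A'\in U(p)$, applying Proposition \ref{prop-4-2} with the second unitary factor equal to $A'$ gives $\pmb{f}(AVA')=\pmb{f}(V)$, whence $f_{II}^2(\varPhi_\ast(V))=(p+1)\pmb{f}^2(AVA')=(p+1)\pmb{f}^2(V)=f_{II}^2(V)$ for all $V\in T_0^{1,0}\mathfrak{R}_{II}$.

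There is no serious obstacle here: the hard analytic content — the positivity estimate obtained via Cauchy--Schwarz — is already contained in Proposition \ref{prop-4-3}. The only thing to be slightly careful about is recording that the passage to the symmetric subspace is implemented by a linear injection, so that positive definiteness of the Hessian survives restriction; the remainder is bookkeeping with the identifications \eqref{c-IIa}, \eqref{v-IIa} and with the explicit form of $\mbox{Iso}(\mathfrak{R}_{II})$ read off from \eqref{d-IM}.
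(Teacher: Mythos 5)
Your proposal is correct and follows essentially the same route as the paper: invariance is read off from $\varPhi_\ast(V)=AVA'$ with $A\in U(p)$ together with Proposition \ref{prop-4-2}, and strong pseudoconvexity is obtained by the chain rule through the linear identification $v\mapsto V$ of \eqref{v-IIa}, reducing the Hessian of $f_{II}^2$ to that of $\pmb{f}^2$ evaluated on the symmetric matrix $M$ built from $u$, exactly as in the paper. If anything you are slightly more careful than the text in noting that injectivity of the identification upgrades the nonnegativity to strict positivity for $w\neq 0$.
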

	
	\begin{proof}
		Taking $Z_0=0$ in \eqref{p-II}, we have
		\begin{equation}
			\varPhi_0(Z)=AZA',\quad \forall Z\in\mathfrak{R}_{II},\label{II-c}
		\end{equation}
		with $A\in U(p)$. Denote $w=\phi_0(z)$ the complex linear map determined by \eqref{II-c}, which is expressed in terms of complex coordinates $z$
		and $w$ corresponding $Z$ and $W$ in $\mathfrak{R}_{II}$,
		then its Jacobian matrix at $z_0$ (which corresponds to $Z_0)$ is given by (cf. (20) on page 316 in \cite{Look3})
		\begin{equation}
			\frac{\partial w}{\partial z}=A'\otimes_s A'.
		\end{equation}
		Thus for a tangent vector $v=(v_{11},v_{12},\cdots,v_{1p},v_{22},v_{23},\cdots,v_{2p},\cdots,v_{pp})$ at the origin, we have
		\begin{equation}
			(\phi_0)_\ast (v)=v(A'\otimes_s A'),\label{JII-0}
		\end{equation}
		where $(\phi_0)_\ast$ denotes the differential of $\phi_0$ at the origin. Identifying $(\phi_0)_\ast(v)$ with a symmetric matrix,
		denoted by $(\varPhi_{Z_0})_\ast(V)\in \mathscr{M}(p)$, then we have
		\begin{equation}
			(\varPhi_0)_\ast (V)=AVA',\quad \forall V\in T_0^{1,0}\mathfrak{R}_{II}.
		\end{equation}
		It is obvious that
		$$
		f_{II}^2((\varPhi_0)_\ast(V))=f_{II}^2(V)
		$$
		for any $A\in U(p)$. Thus $f_{II}$ is $\mbox{Iso}(\mathfrak{R}_{II})$-invariant.

		Denote $w$ the complex coordinates corresponding to $W$ defined by \eqref{II-c}. Then for
		$$u=(u_{11},\cdots,u_{1p},u_{22},\cdots,u_{2p},\cdots,u_{p-1\,p-1},u_{p-1\,p},u_{pp})\in\mathbb{C}^{\frac{p(p+1)}{2}},$$
		we have
		\begin{eqnarray*}
			\sum_{i\leq j}\sum_{r\leq s}\frac{\partial^2f_{II}^2}{\partial v_{ij}\partial \overline{v_{rs}}}u_{ij}\overline{u_{rs}}
			&=&\sum_{i=j}\sum_{r=s}\frac{\partial^2f_{II}^2}{\partial v_{ii}\partial\overline{v_{rr}}}u_{ii}\overline{u_{rr}}
			+\sum_{i=j}\sum_{r<s}\frac{\partial^2f_{II}^2}{\partial v_{ii}\partial\overline{v_{rs}}}u_{ii}\overline{u_{rs}}\\
			&&+\sum_{i<j}\sum_{r=s}\frac{\partial^2f_{II}^2}{\partial v_{ij}\partial\overline{v_{rr}}}u_{ij}\overline{u_{rr}}
			+\sum_{i<j}\sum_{r<s}\frac{\partial^2f_{II}^2}{\partial v_{ij}\partial\overline{v_{rs}}}u_{ij}\overline{u_{rs}}.
		\end{eqnarray*}
		Taking $M=(M_{ij})$ with $M_{ij}=M_{ji}$ for $i\leq j$ and
		$$M_{ij}=\frac{u_{ij}}{\sqrt{2}p_{ij}}\quad\mbox{with}\quad p_{ij}=\left\{
		\begin{array}{ll}
			\frac{1}{\sqrt{2}}, & \hbox{for}\; i=j \\
			1, & \hbox{for}\; i<j
		\end{array}
		\right.
		$$
		then it is easy to check that
		\begin{eqnarray*}
			\sum_{i\leq j}\sum_{r\leq s}\frac{\partial^2f_{II}^2}{\partial v_{ij}\partial \overline{v_{rs}}}u_{ij}\overline{u_{rs}}
			&=&(p+1)\sum_{i,j=1}^p\sum_{r,s=1}^p
			\frac{\partial^2\pmb{f}^2}{\partial V_{ij}\partial\overline{V_{rs}}}M_{ij}\overline{M_{rs}}\geq 0
		\end{eqnarray*}
		for any  nonzero tangent vector $V\cong \mathbb{C}^{\frac{p(p+1)}{2}}$ and $M\in\mathscr{M}(p)\cong \mathbb{C}^{\frac{p(p+1)}{2}}$.
		Thus $f_{II}$ is also a complex Minkowski norm on $T_0^{1,0}\mathfrak{R}_{II}$ for any fixed $t\in[0,+\infty)$ and $k\in\mathbb{N},k\geq 2$.
	\end{proof}
	
	\begin{theorem}\label{KB-FII}
		Let $\mathfrak{B}_l(Z;V)$ be defined by \eqref{s12} for $Z\in \mathfrak{R}_{II}$ and vectors $V\in T_Z^{1,0}\mathfrak{R}_{II}$.
		Then
		\begin{eqnarray}
			F_{II}^2(Z;V)&=&\frac{p+1}{1+t}\left\{\mathfrak{B}_1(Z;V)
			+t\sqrt[k]{\mathfrak{B}_k(Z;V)}\right\}\label{FII}
		\end{eqnarray}
		is a complete $\mbox{Aut}(\mathfrak{R}_{II})$-invariant K\"ahler-Berwald metric for any fixed $t\in[0,+\infty),k\in\mathbb{N}$ and $k\geq 2$.
	\end{theorem}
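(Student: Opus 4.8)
The plan is to mirror, almost verbatim, the proof of Theorem \ref{KB-FI}. Fix an arbitrary $Z_0\in\mathfrak{R}_{II}$ and set $F_{II}^2(Z_0;V):=f_{II}^2\big((\varPhi_{Z_0})_\ast(V)\big)$ for $V\in T_{Z_0}^{1,0}\mathfrak{R}_{II}$, where $\varPhi_{Z_0}$ is the automorphism \eqref{p-II} with $\varPhi_{Z_0}(Z_0)=0$. Since the preceding proposition already shows that $f_{II}^2$ is an $\mbox{Iso}(\mathfrak{R}_{II})$-invariant complex Minkowski norm, Theorem \ref{hfm} immediately gives that this definition is independent of the choice of $\varPhi_{Z_0}$ and that $F_{II}$ is an $\mbox{Aut}(\mathfrak{R}_{II})$-invariant strongly pseudoconvex complex Finsler metric.

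The first real task is to identify this abstract object with the explicit expression \eqref{FII}. By \eqref{d-IM} one has $(\varPhi_{Z_0})_\ast(V)=AVA'$ with $\overline{A'}A=(I_p-Z_0\overline{Z_0})^{-1}$, and transposing (using $Z_0'=Z_0$) gives $A'\overline{A}=(I_p-\overline{Z_0}Z_0)^{-1}$. Then, using $\mbox{tr}\{XY\}=\mbox{tr}\{YX\}$, reinserting the factors $\overline{A'}A$ and $A'\overline{A}$ between consecutive blocks of the $l$-th power, and exploiting $\overline{V'}=\overline V$ for symmetric $V$, one obtains for every $l\in\mathbb{N}$
\begin{equation*}
\mathfrak{B}_l\big(0;(\varPhi_{Z_0})_\ast(V)\big)=\mbox{tr}\big\{[(AVA')\overline{(AVA')'}]^l\big\}=\mathfrak{B}_l(Z_0;V),
\end{equation*}
the exact type-$II$ analogue of \eqref{Bz0}. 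Substituting $l=1$ and $l=k$ into $f_{II}^2=(p+1)\pmb{f}^2$ and then letting $Z_0$ range over $\mathfrak{R}_{II}$ produces \eqref{FII}.

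Next I would verify the K\"ahler-Berwald property through Theorem \ref{th-2.1}, i.e.\ that $\partial^2F_{II}^2/\partial z_{ij}\partial\overline{v_{rs}}\big|_{(0;v)}=0$ for every nonzero $v$. Expanding $(I-Z\overline Z)^{-1}$ and $(I-\overline Z Z)^{-1}$ into Neumann series shows that, viewed as a function of $(Z,\overline Z)$, every monomial of $\mathfrak{B}_l(Z;V)$ carries equal numbers of $Z$'s and $\overline Z$'s; consequently $\partial\mathfrak{B}_l/\partial z_{ij}$, and a fortiori $\partial^2\mathfrak{B}_l/\partial z_{ij}\partial\overline{v_{rs}}$, still contains an $\overline Z$ factor and hence vanishes at $Z=0$ (the constants $\sqrt2\,p_{ij}$ relating $z_{ij}$ to $Z_{ij}$ are immaterial). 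Differentiating \eqref{FII} term by term then yields the required vanishing, so $F_{II}$ is an $\mbox{Aut}(\mathfrak{R}_{II})$-invariant K\"ahler-Berwald metric. Completeness is finally immediate from Remark \ref{RK1}: since $\mathfrak{R}_{II}$ is an irreducible classical domain and the above vanishing holds at the origin, the Chern-Finsler connection coefficients of $F_{II}$ coincide with the Chern-connection coefficients of the complete Bergman metric on $\mathfrak{R}_{II}$, so $F_{II}$ has the same geodesics (as point sets) and is therefore complete.

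I expect the only genuinely delicate point to be the bookkeeping in the second step: getting the transpose/conjugate relations among $A$, $Z_0$, and the blocks $(I_p\mp Z_0\overline{Z_0})^{-1}$ exactly right, and confirming that the symmetric-matrix identification (the weights $p_{ij}$ and the index range $i\le j$) is compatible with $(\varPhi_{Z_0})_\ast$ acting simply as $V\mapsto AVA'$ on $\mathscr{M}(p)$. Once that identity is in hand, the rest is a direct transcription of the type-$I$ argument.
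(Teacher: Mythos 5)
Your proposal is correct and follows essentially the same route as the paper's proof: define $F_{II}^2(Z_0;V)=f_{II}^2((\varPhi_{Z_0})_\ast(V))$, invoke the $\mbox{Iso}(\mathfrak{R}_{II})$-invariance of $f_{II}^2$ together with Theorem \ref{hfm}, identify the result with \eqref{FII} via $(\varPhi_{Z_0})_\ast(V)=AVA'$ and the cyclic trace identities, then verify $\partial^2F_{II}^2/\partial z_{ij}\partial\overline{v_{rs}}\big|_{(0;v)}=0$ and conclude by Theorem \ref{th-2.1} and Remark \ref{RK1}. The only differences are cosmetic: you fill in the ``easy to check'' trace computation explicitly and justify the vanishing of the mixed derivative by a Neumann-series parity argument rather than the paper's direct chain-rule statement, both of which are sound.
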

	
	\begin{proof}
		Let $Z_0$ be any fixed point in $\mathfrak{R}_{II}$ which corresponds to the point $z_0\in \mathbb{C}^{\frac{p(p+1)}{2}}$ as in \eqref{c-II} and \eqref{c-IIa}. Let
		$v\in \mathbb{C}^{\frac{p(p+1)}{2}}$ be a tangent vector at $z_0$, which corresponds to $V=(V_{ij})$ as in \eqref{v-II} and \eqref{v-IIa}.
		Let $\varPhi_{Z_0}\in\mbox{Aut}(\mathfrak{R}_{II})$ be given by \eqref{p-II}.
		Denote $U=(U_{ab})$  a symmetric matrix with $U_{ab}=\frac{u_{ab}}{\sqrt{2}p_{ab}}(a\leq b)$ and
		\begin{equation}
			u_{ab}:=\sum_{i\leq j}\frac{\partial w_{ab}}{\partial z_{ij}}v_{ij}, \quad 1\leq a\leq b\leq p.
		\end{equation}
		Then $U$ can be identified with the differential of \eqref{p-II} at the point $z_0$.
		By \eqref{p-IIa}, we have
		$U=(\varPhi_{Z_0})_\ast(V)$ and if we define
		$$F_{II}^2(Z_0;V):=f_{II}^2((\varPhi_{Z_0})_\ast (V)),\quad \forall V\in T_{Z_0}^{1,0}\mathfrak{R}_{II},$$
		where $(\varPhi_{Z_0})_\ast$ denotes the differential of \eqref{p-II} at the point $Z_0$. Then it is easy to check that
		\begin{eqnarray*}
			F_{II}^2(Z_0;V)&=&\frac{p+1}{1+t}\left\{\mathfrak{B}_1(Z_0;V)
			+t\sqrt[k]{\mathfrak{B}_k(Z_0;V)}\right\}.
		\end{eqnarray*}
		Since $Z_0$ is any fixed point in $\mathfrak{R}_{II}$, changing $Z_0$ to $Z$ if necessary and by Theorem \ref{hfm}, it follows that $F_{II}$ is an $\mbox{Aut}(\mathfrak{R}_{II})$-invariant strongly pseudoconvex complex Finsler metric for any fixed $t\in[0,+\infty),k\in\mathbb{N}$ and $k\geq 2$.
		
		Next at the point $(0;v)$ which corresponding to $(0;V)$, we have
		\begin{eqnarray}
			\frac{\partial^2F_{II}^2}{\partial z_{rs}\partial \overline{v_{ij}}}
			=\frac{1}{2p_{rs}p_{ij}}\frac{\partial^2F_{II}^2}{\partial Z_{rs}\partial\overline{V_{ij}}}=0.\label{F_zv-2}
		\end{eqnarray}
		Thus by Theorem \ref{th-2.1} and Remark \ref{RK1}, $F_{II}$ is a complete K\"ahler-Berwald metric for any fixed $t\in[0,+\infty),k\in\mathbb{N}$ and $k\geq 2$.
	\end{proof}
	
	\begin{theorem}\label{C-FII}
		Let $F_{II}(Z;V)$ be defined by \eqref{FII}. Then $F_{II}$ has negative holomorphic sectional curvature and nonpositive holomorphic bisectional
		curvature. More precisely, the holomorphic sectional curvature $K_{II}$ and the holomorphic bisectional
		curvature $B_{II}$ of $F_{II}$ are given respectively by
		\begin{equation}
			K_{II}(Z;V)=-\frac{4(p+1)}{1+t}\frac{\mathfrak{B}_2(Z;V)
				+t\mathfrak{B}_k^{\frac{1}{k}-1}(Z;V)\mathfrak{B}_{k+1}(Z;V)}{F_{II}^4(Z;V)}\label{hsc-F-II}
		\end{equation}
		and
		\begin{eqnarray}
			B_{II}(Z; V, W)
			&=&-\frac{4(p+1)}{1+t}\frac{\mathcal{B}_{1,1}(Z;V,W)
				+t{\mathfrak{B}_{k}^{\frac{1}{k}-1}(Z;V)}
				\mathcal{B}_{k,1}(Z;V,W)}{F_{II}^2(Z;V)F_{II}^2(Z;V)},\label{hbsc-F-II}
		\end{eqnarray}
		where $\mathfrak{B}_l(Z;V)$ and $\mathcal{B}_{i,j}(Z;V)$ are defined by \eqref{s12} and \eqref{s13} respectively for $Z\in \mathfrak{R}_{II}$ and nonzero vectors
		$V,W\in T_Z^{1,0}\mathfrak{R}_{II}$.
	\end{theorem}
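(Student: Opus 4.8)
The plan is to mirror the proof of Theorem~\ref{C-FI}, the only genuinely new ingredient being that the intrinsic coordinates $z_{rs}$ ($r\le s$) on $\mathfrak{R}_{II}$ are tied to the matrix entries $Z_{rs}=Z_{sr}$ by the weights $p_{rs}$ of \eqref{c-IIa}. By Theorem~\ref{KB-FII}, $F_{II}$ is an $\mbox{Aut}(\mathfrak{R}_{II})$-invariant K\"ahler--Berwald metric and, as shown there, $\frac{\partial^2 F_{II}^2}{\partial z_{rs}\partial\overline{v_{ij}}}(0;v)=0$, equivalently $\varGamma_{;i}^l(0;v)=0$; hence Propositions~\ref{hsc} and~\ref{bhsc} apply, and via \eqref{hscb} and \eqref{hbscc} the task reduces to evaluating $\sum_{r\le s}\sum_{i\le j}\frac{\partial^2 F_{II}^2}{\partial z_{rs}\partial\overline{z_{ij}}}(0;V)\,v_{rs}\overline{v_{ij}}$ and the same sum with $w_{rs}\overline{w_{ij}}$ in place of $v_{rs}\overline{v_{ij}}$.

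For the differentiation I would introduce $G_{rs}:=\partial Z/\partial z_{rs}$, which by \eqref{c-IIa} is the real symmetric matrix $\tfrac1{\sqrt2}(E_{rs}+E_{sr})$ for $r<s$ and $E_{rr}$ for $r=s$, so that $V=\sum_{r\le s}v_{rs}G_{rs}$ and $\overline V=\sum_{r\le s}\overline{v_{rs}}\,G_{rs}$. Applying the chain rule to \eqref{A-k-a} (with $E_{ab}$ replaced by $G_{rs}$) shows $\frac{\partial\mathfrak{B}_l}{\partial z_{rs}}(0;V)=0$ as in \eqref{B-l0}, and one more differentiation followed by $Z=0$ gives, in analogy with \eqref{B-ij0},
\[
\frac{\partial^2\mathfrak{B}_l}{\partial z_{rs}\partial\overline{z_{ij}}}(0;V)=l\,\mbox{tr}\left\{(V\overline{V'})^{l-1}\big(G_{rs}G_{ij}V+VG_{ij}G_{rs}\big)\overline{V'}\right\},\qquad l=1,2,\dots .
\]
Since the first $z$-derivatives of $\mathfrak{B}_k$ vanish at $Z=0$, differentiating $F_{II}^2=\frac{p+1}{1+t}(\mathfrak{B}_1+t\mathfrak{B}_k^{1/k})$ produces no $\partial\mathfrak{B}_k\cdot\overline{\partial\mathfrak{B}_k}$ term, so $\frac{\partial^2 F_{II}^2}{\partial z_{rs}\partial\overline{z_{ij}}}(0;V)=\frac{p+1}{1+t}\big(\frac{\partial^2\mathfrak{B}_1}{\partial z_{rs}\partial\overline{z_{ij}}}+\frac tk\mathfrak{B}_k^{\frac1k-1}\frac{\partial^2\mathfrak{B}_k}{\partial z_{rs}\partial\overline{z_{ij}}}\big)(0;V)$.

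Contracting with $v_{rs}\overline{v_{ij}}$ and using $\sum_{r\le s}v_{rs}G_{rs}=V$, $\sum_{i\le j}\overline{v_{ij}}G_{ij}=\overline V$, together with $\overline{V'}=\overline V$ (as $V=V'$) and cyclicity of the trace, collapses $G_{rs}G_{ij}V+VG_{ij}G_{rs}$ to $2V\overline V V$ and yields $\sum\frac{\partial^2\mathfrak{B}_l}{\partial z_{rs}\partial\overline{z_{ij}}}(0;V)v_{rs}\overline{v_{ij}}=2l\,\mathfrak{B}_{l+1}(0;V)$; likewise the contraction with $w_{rs}\overline{w_{ij}}$ gives $2l\,\mathcal{B}_{l,1}(0;V,W)$, where the two resulting traces $\mbox{tr}\{(V\overline V)^l W\overline W\}$ and $\mbox{tr}\{(\overline V V)^l\overline W W\}$ coincide precisely because $V,W$ are symmetric — this is the reason the two terms $\mathcal{B}_{1,1}(Z;V,W)$ and $\mathcal{B}_{1,1}(\overline{Z'};\overline{V'},\overline{W'})$ in the type~$I$ formula \eqref{hbsc-F-I} coalesce into the single term of \eqref{hbsc-F-II}. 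Substituting into \eqref{hscb}, \eqref{hbscc} gives \eqref{hsc-F-II}, \eqref{hbsc-F-II} at the origin; by $\mbox{Aut}(\mathfrak{R}_{II})$-invariance of $K,B$ and the transformation rules $\mathfrak{B}_l(0;(\varPhi_{Z_0})_\ast V)=\mathfrak{B}_l(Z_0;V)$, $\mathcal{B}_{i,j}(0;(\varPhi_{Z_0})_\ast V,(\varPhi_{Z_0})_\ast W)=\mathcal{B}_{i,j}(Z_0;V,W)$ — which follow from \eqref{d-IM}, \eqref{p-IIa} and cyclicity of the trace — the formulas extend to every $Z\in\mathfrak{R}_{II}$. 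Finally $\mathfrak{B}_l(Z;V)>0$ for $V\neq0$ and $\mathcal{B}_{l,1}(Z;V,W)\ge0$, since each factor $(I-Z\overline{Z'})^{-1}V(I-\overline{Z'}Z)^{-1}\overline{V'}$ is similar to the positive semidefinite $LL^\ast$ with $L=(I-Z\overline{Z'})^{-1/2}V(I-\overline{Z'}Z)^{-1/2}$; hence $K_{II}<0$ and $B_{II}\le0$.

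The matrix calculus is routine; the only genuinely delicate point — and the sole departure from the $\mathfrak{R}_I$ argument — is carrying out the passage to the intrinsic coordinates $z_{rs}$ consistently, namely absorbing the weights $p_{rs}$ into the symmetric basis matrices $G_{rs}$ so that $\sum_{r\le s}v_{rs}G_{rs}=V$ and $\sum_{i\le j}\overline{v_{ij}}G_{ij}=\overline V$ hold exactly. Once this is in place, the computation is identical in structure to that of Theorem~\ref{C-FI}.
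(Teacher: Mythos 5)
Your proposal is correct and follows essentially the same route as the paper: reduce to the origin via Propositions \ref{hsc} and \ref{bhsc}, compute the Hessian of $\mathfrak{B}_l$ at $Z=0$, contract with $v\overline{v}$ (resp. $w\overline{w}$) to get $2l\mathfrak{B}_{l+1}$ (resp. $2l\mathcal{B}_{l,1}$, using the symmetry $V=V'$, $W=W'$ to merge the two trace terms), and then invoke $\mbox{Aut}(\mathfrak{R}_{II})$-invariance. The only difference is bookkeeping: you absorb the weights $p_{rs}$ into the basis matrices $G_{rs}$ so that $\sum_{r\le s}v_{rs}G_{rs}=V$, whereas the paper differentiates in the matrix entries $Z_{ab}$ via the identity \eqref{v-V}, carrying explicit factors $p_{ab}^2$; the two are equivalent.
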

	
	\begin{proof}Recall that for any $Z\in\mathfrak{R}_{II}$ and $V\in T_Z^{1,0}\mathfrak{R}_{II}$, one has $ Z=Z'$, $V=V'$,
		and$Z_{ij}=\frac{z_{ij}}{\sqrt{2}p_{ij}}, V_{ij}=\frac{v_{ij}}{\sqrt{2}p_{ij}} $ for $i\leq j$.
		Since
		\begin{eqnarray}
			v_{ab}\frac{\partial}{\partial z_{ab}}=V_{ab}\frac{\partial}{\partial Z_{ab}},\quad
			v_{ab}\overline{v_{cd}}\frac{\partial^2}{\partial z_{ab}\partial \overline{z_{cd}}}
			=V_{ab}\overline{V_{cd}}\frac{\partial^2}{\partial Z_{ab}\partial \overline{Z_{cd}}},
			\label{v-V}
		\end{eqnarray}
		thus for $l=1,2,\cdots,$ we have
		\begin{eqnarray}
			\frac{\partial\mathfrak{B}_l}{\partial Z_{ab}}
			&=&lp_{ab}^2\mbox{tr}\Big\{\left[(I-Z\overline{Z})^{-1}V(I-\overline{Z}Z)^{-1}\overline{V}\right]^{l-1}
			(I-Z\overline{Z})^{-1}[(E_{ab}+E_{ba})\overline{Z}(I-Z\overline{Z})^{-1}V\nonumber\\
			&&+V(I-\overline{Z}Z)^{-1}\overline{Z}(E_{ab}+E_{ba})](I-\overline{Z}Z)^{-1}\overline{V}\Big\},\label{A2-k-a}
		\end{eqnarray}
		from which we get
		\begin{equation}
			\frac{\partial\mathfrak{B}_l}{\partial Z_{ab}}\Big|_{(0;V)}=0,\quad \frac{\partial\mathfrak{B}_l}{\partial \overline{Z_{cd}}}\Big|_{(0;V)}=0.
		\end{equation}
		Differentiating \eqref{A2-k-a} with respect to $\overline{Z_{cd}}$ and then setting $Z=0$, we get
		$$
		\frac{\partial^2\mathfrak{B}_l}{\partial Z_{ab}\partial \overline{Z_{cd}}}(0;V)
		=lp_{ab}^2p_{cd}^2 \mbox{tr}\Big\{\left[V\overline{V}\right]^{l-1}
		\left[(E_{ab}+E_{ba})(E_{dc}+E_{cd})V+V(E_{dc}+E_{cd})(E_{ab}+E_{ba})\right]\overline{V}\Big\}.\label{A2-k-ab}
		$$
		Now it is easy to check that
		\begin{eqnarray}
			\sum_{a\leq b}\sum_{c\leq d}\frac{\partial^2\mathfrak{B}_l}{\partial Z_{ab}\partial \overline{Z_{cd}}}(0;V)V_{ab}\overline{V_{cd}}&=&2l\mathfrak{B}_{l+1}(0;V).
		\end{eqnarray}
		Thus by Proposition \ref{hsc},  for any nonzero tangent vector $V\in T_0^{1,0}\mathfrak{R}_{II}$, we have
		\begin{eqnarray}
			K_{II}(0;V)&=&-\frac{4(p+1)}{1+t}\frac{\mathfrak{B}_2(0;V)
				+t\mathfrak{B}_k^{\frac{1}{k}-1}(0;V)\mathfrak{B}_{k+1}(0;V)}{F_{II}^4(0;V)}.\label{hsc2-b1}
		\end{eqnarray}
		Since the holomorphic sectional curvature is holomorphic-invariant, and by \eqref{Bz0}, for any nonzero tangent vector $V\in T_Z^{1,0}\mathfrak{R}_{II}$ at $Z\in\mathfrak{R}_{II}$ we have
		\eqref{hsc-F-II}.
		Notice $\mathfrak{B}_{l}(Z;V)>0,$ we always have $K_{II}(Z;V)<0$.
		
		Similarly, for any nonzero tangent vectors $V,W\in T_0^{1,0}\mathfrak{R}_{II}$ we have
		\begin{eqnarray*}
			\sum_{a\leq b}\sum_{c\leq d}\frac{\partial^2\mathfrak{B}_l}{\partial Z_{ab}\partial \overline{Z_{cd}}}(0;V)W_{ab}\overline{W_{cd}}&=&2l\mathcal{B}_{l,1}(0;V,W).
		\end{eqnarray*}
		
		Thus by Proposition \ref{bhsc}, we have
		\begin{eqnarray*}
			B_{II}(0;V,W) &=&-\frac{4(p+1)}{1+t}\frac{\mathcal{B}_{1,1}(0;V,W)+t{\mathfrak{B}_{k}^{\frac{1}{k}-1}(0;V)}\mathcal{B}_{k,1}(0;V,W)}{F_{II}^2(0;V)F_{II}^2(0;W)}.
		\end{eqnarray*}
		Since the holomorphic bisectional curvature is holomorphic-invariant, by \eqref{Bz0} and  \eqref{Bz1}
		we have \eqref{hbsc-F-II}
		for any nonzero tangent vectors $V,W\in T_Z^{1,0}\mathfrak{R}_{II}$ and $Z\in\mathfrak{R}_{II}$.
		Since $ \mathcal{B}_{i,j}(Z;V,W)\geq0, $ we have
		$ B_{II}(Z; V, W)\leq0, $
		with equality holds if and only if
		$$\overline{W'}(I_p-Z\overline{Z'})^{-1}V=0.$$
	\end{proof}

	\subsection{$\mbox{Aut}(\mathfrak{R}_{III})$-invariant K\"ahler-Berwald metrics on $\mathfrak{R}_{III}$}\label{subsection-3.4}
	
	In this subsection, we shall construct $\mbox{Aut}(\mathfrak{R}_{III})$-invariant strongly pseudoconvex complex Finsler metrics on $\mathfrak{R}_{III}$. Note that $\mathfrak{R}_{III}$ is also a complex submanifold of $\mathfrak{R}_I$, hence $T_0^{1,0}\mathfrak{R}_{III}$ is a complex subspace of $T_0^{1,0}\mathfrak{R}_I$ too, that is, $T_0^{1,0}\mathfrak{R}_{III}$ consists of matrices $V\in\mathscr{M}(q)$ which are skew symmetric. Intuitively, the restriction of the complex Minkowski norm $f_I^2$ to the holomorphic tangent space $T_0^{1,0}\mathfrak{R}_{III}$ leads to a strongly pseudoconvex complex Minkowski norm on $T_0^{1,0}\mathfrak{R}_{III}$, which will be denote by $f_{III}^2$ in the following.

	Note that a point $Z=(Z_{ij})\in\mathfrak{R}_{III}(q)$ is identified with a point
	\begin{equation}
		z=(z_{12},\cdots,z_{1n},z_{23},\cdots,z_{2q},\cdots,z_{q-1\,q})\in\mathbb{C}^{\frac{q(q-1)}{2}}\label{c-III}
	\end{equation}
	by setting
	\begin{equation}
		Z_{ji}=-Z_{ij}\quad \mbox{and}\quad Z_{ij}:=z_{ij}\quad\forall 1\leq i<j\leq q.\label{c-IIIa}
	\end{equation}
	
	Similarly, a tangent vector
	\begin{equation}
		v=(v_{12},\cdots,v_{1n},v_{23},\cdots,v_{2q},\cdots,v_{q-1\,q})\in\mathbb{C}^{\frac{q(q-1)}{2}}\label{v-III}
	\end{equation}
	at the point $z_0\in \mathbb{C}^{\frac{q(q-1)}{2}}$ (which corresponds to $Z_0\in\mathfrak{R}_{III}$)
	is identified with a skew symmetric matrix $V=(V_{ij})$ with
	\begin{equation}
		V_{ij}:=v_{ij},\quad \forall 1\leq i<j\leq q.\label{v-IIIa}
	\end{equation}
	
	Since $\mathfrak{R}_{III}$ is homogeneous, for any fixed point $Z_0\in\mathfrak{R}_{III}$, there exists a map $\varPhi_{Z_0}\in\mbox{Aut}(\mathfrak{R}_{III})$ such that $\varPhi_{Z_0}(Z_0)=0$. Indeed, (cf. \cite{Hua}, \cite{Look3})
	\begin{equation}
		W=\varPhi_{Z_0}(Z)=A(Z-Z_0)(I_q+\overline{Z_0}Z)^{-1}{\overline{A}}^{-1},\label{p-III-a}
	\end{equation}
	where $A\in\mathscr{M}(q)$ satisfies
	\begin{equation}
		\overline{A'}A=(I_q+Z_0\overline{Z_0})^{-1}.\label{p-IIIa-a}
	\end{equation}
	\begin{remark}\label{rm-III}
		For $q=4$, if we denote
		$$
		Z=\begin{pmatrix}
			0 & Z_{12} & Z_{13} & Z_{14} \\
			-Z_{12} & 0 & Z_{23} & Z_{24} \\
			-Z_{13} & -Z_{23} & 0 & Z_{34} \\
			-Z_{14} & -Z_{24} & -Z_{34} & 0 \\
		\end{pmatrix}\in\mathfrak{R}_{III},
		$$
		then
		\begin{equation}
			W=\widetilde{Z}=\begin{pmatrix}
				0 & Z_{12} & Z_{13} & Z_{23} \\
				-Z_{12} & 0 & Z_{14} & Z_{24} \\
				-Z_{13} & -Z_{14} & 0 & Z_{34} \\
				-Z_{23} & -Z_{24} & -Z_{34} & 0 \\
			\end{pmatrix}\label{rm-III2}
		\end{equation}
		also belongs to $\mbox{Aut}(\mathfrak{R}_{III})$ which cannot be expressed in the form of \eqref{p-III-a} (cf. \cite{Klingen3 , Yin}).
	\end{remark}
	
	\begin{definition}Let $t\in[0,+\infty),k\in\mathbb{N}$ and $k\geq 2$. Define
		\begin{equation}
			f_{III}^2(V):=(q-1)\pmb{f}^2(V),\quad \forall V\in T_0^{1,0}\mathfrak{R}_{III}\cong\{V=-V'|V\in\mathscr{M}(q)\},\label{f-III}
		\end{equation}
		where $\pmb{f}^2(V)$ is given by \eqref{bf}.
	\end{definition}

	Let $A\in\mathscr{M}(q)$, the skew symmetric tensor product of $A=(A_{ab})$ is a matrix $A\otimes_{sk} A=(C_{(ab)(ij)})\in\mathscr{M}\left(\frac{q(q-1}{2}\right)$
	whose elements $C_{(ab)(ij)}$ at row $(ab)(a<b)$ and column $(ij)(i<j)$ are given by \cite{Look3}
	\begin{equation}
		C_{(ab)(ij)}:=A_{ai}A_{bj}-A_{aj}A_{bi},\quad a<b\;\mbox{and}\;i<j.
	\end{equation}
	
	The following proposition is due to Q. K. Lu \cite{Look3} (see p. 317) whenever $z_0=0$.
	\begin{proposition}
		Let $w=\phi_{z_0}(z)$ be the biholomorphic map associated to \eqref{p-III-a}, where $z_0,z$ and $w$ are the corresponding complex coordinates of $Z_0,Z$ and $W$ in \eqref{p-III-a}. Then
		\begin{eqnarray}
			(\phi_{z_0})_\ast (v)=v(A'\otimes_{sk} A'),\label{d-IIIZ0}
		\end{eqnarray}
		where $A$ satisfies \eqref{p-IIIa-a},  $(\phi_{z_0})_\ast$ denotes the differential of $\phi_{z_0}$ at the point $z_0$ and $v$ is given by \eqref{v-III}.
	\end{proposition}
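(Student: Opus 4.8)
The plan is to follow the same three-step scheme used in the proof of Proposition~\ref{L-I} and its type~$II$ analogue, adapted to the skew-symmetric setting. First I would differentiate the automorphism formula \eqref{p-III-a}, namely $W=A(Z-Z_0)(I_q+\overline{Z_0}Z)^{-1}\overline{A}^{-1}$, with respect to $Z$ and then set $Z=Z_0$. Since the derivative of $(I_q+\overline{Z_0}Z)^{-1}$ produces a term proportional to $(Z-Z_0)$, which vanishes at $Z=Z_0$, one is left with $dW=A\,dZ\,(I_q+\overline{Z_0}Z_0)^{-1}\overline{A}^{-1}$. Conjugating \eqref{p-IIIa-a} and using $\overline{Z_0\overline{Z_0}}=\overline{Z_0}Z_0$ gives $A'\overline{A}=(I_q+\overline{Z_0}Z_0)^{-1}$, hence $(I_q+\overline{Z_0}Z_0)^{-1}\overline{A}^{-1}=A'$, and therefore $dW=A\,dZ\,A'$.

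Next I would pass to the coordinate identifications \eqref{c-III}--\eqref{c-IIIa}, that is, $Z_{ij}=z_{ij}$ for $i<j$ and $Z_{ji}=-Z_{ij}$. Writing $dW_{ab}=(A\,dZ\,A')_{ab}=\sum_{r,s}A_{ar}\,dZ_{rs}\,A_{bs}$ and splitting the double sum according to $r<s$ and $r>s$ (the diagonal terms vanishing because $dZ_{rr}=0$), the skew-symmetry $dZ_{sr}=-dZ_{rs}$ lets me combine the two halves into $dW_{ab}=\sum_{r<s}dZ_{rs}(A_{ar}A_{bs}-A_{as}A_{br})$. Reading off the Jacobian then yields
\[
\frac{\partial w_{ab}}{\partial z_{rs}}\Big|_{z=z_0}=A_{ar}A_{bs}-A_{as}A_{br},\qquad a<b,\ r<s,
\]
which is precisely the entry of $A'\otimes_{sk}A'$ at row $(rs)$ and column $(ab)$.

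Finally, writing $(\phi_{z_0})_\ast(v)=u$ with $u_{ab}=\sum_{r<s}v_{rs}\,\partial w_{ab}/\partial z_{rs}$ and comparing with the definition of the skew-symmetric tensor product, I would conclude $u_{ab}=[v(A'\otimes_{sk}A')]_{ab}$, which is exactly \eqref{d-IIIZ0}. The only genuinely delicate point is the index bookkeeping in the skew-symmetric reduction of the double sum — keeping the ranges and signs consistent — together with the conjugation identity $A'\overline{A}=(I_q+\overline{Z_0}Z_0)^{-1}$ used in the first step; everything else is a routine transcription of the type~$I$ and type~$II$ arguments.
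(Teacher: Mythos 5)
Your proposal is correct and follows essentially the same route as the paper: differentiate \eqref{p-III-a} at $Z=Z_0$ to get $dW=A\,dZ\,A'$, reduce the double sum over $(r,s)$ to $r<s$ using skew-symmetry, and read off the Jacobian entries as those of $A'\otimes_{sk}A'$. The only difference is that you spell out the conjugation identity $A'\overline{A}=(I_q+\overline{Z_0}Z_0)^{-1}$ justifying $dW=A\,dZ\,A'$, a step the paper states without comment.
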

	
	\begin{proof}
		Differentiating \eqref{p-III-a}, and then setting $Z=Z_0$, we have
		\begin{equation}
			dW=AdZA',\label{d-IIIa}
		\end{equation}
		where $A$ and $D$ satisfy \eqref{p-IIIa-a}. Substituting \eqref{c-IIIa} into \eqref{d-IIIa}, we have
		\begin{eqnarray*}
			dw_{ab}&=&\sum_{r,s=1}^nA_{ar}dz_{rs}A_{bs}=\sum_{r<s}dz_{rs}(A_{ar}A_{bs}-A_{as}A_{br}),
		\end{eqnarray*}
		from which it follows that
		$$
		\frac{\partial w_{ab}}{\partial z_{rs}}\Big|_{z=z_0}=A_{ar}A_{bs}-A_{as}A_{br}.
		$$
		Thus  if we setting %$(\phi_{z_0})_\ast(v)=u=(u_{12},u_{13},\cdots,u_{1n},u_{23},u_{24},\cdots,u_{2n},\cdots,u_{n-1\,n})\in\mathbb{C}^{\frac{n(n-1)}{2}}$, we have
		\begin{equation}
			u_{ab}=\sum_{r<s}v_{rs}\frac{\partial w_{ab}}{\partial z_{rs}}
			=\sum_{r<s}v_{rs}(A_{ar}A_{bs}-A_{as}A_{br})=[v(A'\otimes_{sk}A')]_{ab}.
		\end{equation}
	\end{proof}
	
	\begin{remark}
		If we denote $(\varPhi_{Z_0})_\ast(V)$ the skew-symmetric matrix in $\mathscr{M}(q)$ which corresponds to the vector
		$(\phi_{z_0})_\ast(v)\in\mathbb{C}^{\frac{q(q-1)}{2}}$ under the identification \eqref{v-III} and \eqref{v-IIIa}. Then we have
		\begin{equation}
			(\varPhi_{Z_0})_\ast(V)=AVA',\label{d-IIIM}
		\end{equation}
		where $A\in\mathscr{M}(q)$  satisfies \eqref{p-IIIa-a}.
	\end{remark}
	
	\begin{proposition}\label{p-III}
		$f_{III}^2$ is an $\mbox{Iso}(\mathfrak{R}_{III})$-invariant strongly pseudoconvex complex Minkowski
		norm for any fixed $t\in[0,+\infty),k\in \mathbb{N}$ and $k\geq 2$.
	\end{proposition}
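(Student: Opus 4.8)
The plan is to follow essentially the same route used for $f_{II}^2$, the one genuinely new ingredient being the exceptional automorphism of $\mathfrak R_{III}(4)$ recorded in Remark \ref{rm-III}.

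First I would show that $f_{III}^2$ is a strongly pseudoconvex complex Minkowski norm on $T_0^{1,0}\mathfrak R_{III}$. By Proposition \ref{prop-4-3}, $\pmb{f}^2$ is a complex Minkowski norm on $\mathscr M(q)\cong\mathbb C^{q^2}$, so its Levi form \eqref{Gab} is positive definite at every nonzero point. The identification \eqref{v-IIIa} exhibits $T_0^{1,0}\mathfrak R_{III}$ as the image of the $\mathbb C$-linear injection $\iota:\mathbb C^{q(q-1)/2}\to\mathbb C^{q^2}$, $v\mapsto V$ with $V_{ij}=v_{ij}$, $V_{ji}=-v_{ij}$ for $i<j$ and $V_{ii}=0$, and by \eqref{f-III} one has $f_{III}^2=(q-1)\,\pmb{f}^2\circ\iota$. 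Since $\iota$ is linear and injective, $f_{III}^2$ is $C^\infty$ away from the origin and $(1,1)$-homogeneous in $v$, and the chain rule gives, for $0\neq v$ and any $0\neq u$, with $M:=\iota(u)\neq 0$,
\[
\sum_{i<j}\sum_{r<s}\frac{\partial^2 f_{III}^2}{\partial v_{ij}\,\partial\overline{v_{rs}}}\,u_{ij}\overline{u_{rs}}
=(q-1)\sum_{i,j=1}^{q}\sum_{r,s=1}^{q}\frac{\partial^2\pmb{f}^2}{\partial V_{ij}\,\partial\overline{V_{rs}}}\,M_{ij}\overline{M_{rs}}>0 ,
\]
the last inequality because the Levi form of $\pmb{f}^2$ is positive definite at $V=M\neq 0$. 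This is the exact analogue of the computation already carried out for $f_{II}^2$.

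Next I would verify invariance under the ``generic'' part of $\mbox{Iso}(\mathfrak R_{III})$. Setting $Z_0=0$ in \eqref{p-III-a} and using that $\overline{A'}A=I_q$ forces $(\overline{A})^{-1}=A'$, the isotropy maps coming from \eqref{p-III-a} are $\varPhi_0(Z)=AZ\,\overline{A}^{-1}=AZA'$ with $A\in U(q)$, whose differential at the origin is $(\varPhi_0)_\ast(V)=AVA'$, in agreement with \eqref{d-IIIM}. Since $A\in U(q)$ implies $A'\in U(q)$, Proposition \ref{prop-4-2} yields $\pmb{f}(AVA')=\pmb{f}(V)$, hence $f_{III}^2\big((\varPhi_0)_\ast V\big)=f_{III}^2(V)$ for all $V\in T_0^{1,0}\mathfrak R_{III}$. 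For $q\neq 4$ these maps already give all of $\mbox{Iso}(\mathfrak R_{III})$ and we are done.

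The hard part is the case $q=4$, where one must additionally treat the exceptional automorphism $Z\mapsto\widetilde Z$ of \eqref{rm-III2}, whose differential at the origin is the coordinate transposition $v_{14}\leftrightarrow v_{23}$ (the remaining coordinates being fixed); together with the maps $AZA'$ these generate $\mbox{Iso}(\mathfrak R_{III}(4))$. Here I would use Remark \ref{3.2}: $\pmb{f}^2(V)$ is a symmetric function of the singular values $\lambda_i(V)$, which for skew-symmetric $V$ occur in equal pairs, so for $q=4$ it depends only on the unordered pair $\{\lambda_1(V)^2,\lambda_2(V)^2\}$; these are the two roots of $\mu^2-\tfrac12\mbox{tr}\{V\overline{V'}\}\,\mu+|\mbox{Pf}(V)|^2=0$, where $\mbox{tr}\{V\overline{V'}\}=2\sum_{i<j}|V_{ij}|^2$ and $\mbox{Pf}(V)=V_{12}V_{34}-V_{13}V_{24}+V_{14}V_{23}$ are both manifestly symmetric under $V_{14}\leftrightarrow V_{23}$. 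Hence $\pmb{f}^2$, and therefore $f_{III}^2$, is invariant under $Z\mapsto\widetilde Z$ as well, so $f_{III}^2$ is $\mbox{Iso}(\mathfrak R_{III})$-invariant for every admissible $t$ and $k$. Apart from this use of the singular value structure of skew-symmetric matrices, the whole argument is formally identical to the type-$II$ case.
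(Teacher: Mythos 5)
Your proposal is correct and follows essentially the same route as the paper: strong pseudoconvexity is obtained by restricting the positive‑definite Levi form of $\pmb{f}^2$ along the linear embedding of skew‑symmetric matrices, invariance under $\varPhi_0(Z)=AZA'$ follows from Proposition \ref{prop-4-2}, and the exceptional automorphism $Z\mapsto\widetilde Z$ is handled by noting that $f_{III}$ is a symmetric function of the singular values, which that map preserves. The only difference is cosmetic: where the paper cites Klingen for the fact that $\widetilde Z\,\overline{\widetilde Z}$ and $Z\overline{Z}$ have the same eigenvalues, you verify it directly via the invariance of $\mbox{tr}\{V\overline{V'}\}$ and of the Pfaffian, which is a correct and self-contained substitute.
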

	
	\begin{proof}
		Taking $Z_0=0$ in \eqref{p-III}, we have
		\begin{equation}
			W=\varPhi_0(Z)=AZA',\quad \forall Z\in\mathfrak{R}_{III},\label{III-c}
		\end{equation}
		where $A\in U(q)$. Let $z$ and $w$ be the complex coordinates corresponding to $Z$ and $W$ in \eqref{III-c}, and denote
		$w=\phi_0(z)$ the complex linear map corresponding to \eqref{III-c}.
		Then the Jacobian matrix of $w=\phi_0(z)$ at $z=0$ is given by (cf. (24) on page 318 in \cite{Look3})
		\begin{equation}
			\frac{\partial w}{\partial z}=A'\otimes_{sk} A',\quad A\in U(q).
		\end{equation}
		Thus for $v\in \mathbb{C}^{\frac{q(q-1)}{2}}$ given by \eqref{v-III}, we have
		\begin{equation}
			(\phi_0)_\ast (v)=v(A'\otimes_{sk} A'),\label{JIII-0}
		\end{equation}
		where $(\phi_0)_\ast$ denotes the differential of $\phi_0$ at the origin. Identifying $(\phi_0)_\ast(v)\in \mathbb{C}^{\frac{q(q-1)}{2}}$ with
		$(\varPhi_0)_\ast(V)\in\mathscr{M}(q)$, we have
		\begin{equation}
			(\varPhi_0)_\ast (V)=AVA',\quad \forall A\in U(q), V\in T_0^{1,0}\mathfrak{R}_{III}.
		\end{equation}
		It is easy to check that
		$$
		f_{III}^2((\varPhi_0)_\ast(V))=f_{III}^2(V).
		$$
		
		Notice that for the mapping $W=\widetilde{Z}$ in \eqref{rm-III2}, $ W\overline{W} $ and $ Z\overline{Z} $ share the same eigenvalues as pointed out by H. Klingen \cite{Klingen3}, and the complex norm $ f_{III} $ is a symmetric function with respect to the eigenvalues of the vector matrices (see Remark \ref{3.2}). Thus $ f_{III} $ is invariant under the actions of $W=\widetilde{Z},$
		namely $f_{III}$ is $\mbox{Iso}(\mathfrak{R}_{III})$-invariant.

		Denote $w$ the complex coordinates corresponding to $W$ defined by \eqref{III-c}.
		Taking $M=(M_{ij})$ with $M_{ij}=M_{ji}$ for $i\leq j$ and
		$M_{ij}=u_{ij}$ for $i<j$.
		Then for
		$$u=(u_{12},\cdots,u_{1q},u_{23},\cdots,u_{2q},\cdots,u_{q-1\,q})\in\mathbb{C}^{\frac{q(q-1)}{2}},$$
		we have
		\begin{eqnarray*}
			\sum_{i<j}\sum_{r<s}\frac{\partial^2f_{III}^2}{\partial v_{ij}\partial \overline{v_{rs}}}u_{ij}\overline{u_{rs}}
			&=&\sum_{i<j}\sum_{r<s}
			\left(\frac{\partial^2f_{III}^2}{\partial V_{ij}\partial\overline{V_{rs}}}
			+\frac{\partial^2f_{III}^2}{\partial V_{ij}\partial\overline{V_{sr}}}
			+\frac{\partial^2f_{III}^2}{\partial V_{ji}\partial\overline{V_{rs}}}
			+\frac{\partial^2f_{III}^2}{\partial V_{ji}\partial\overline{V_{sr}}}
			\right)M_{ij}\overline{M_{rs}}\\
			&=&(q-1)\sum_{i,j=1}^q\sum_{r,s=1}^q
			\frac{\partial^2\pmb{f}^2}{\partial V_{ij}\partial\overline{V_{rs}}}M_{ij}\overline{M_{rs}}\geq 0
		\end{eqnarray*}
		for any  nonzero tangent vector $v$ and $u\in \mathbb{C}^{\frac{q(q-1)}{2}}$.
		Thus $f_{III}$ is also a complex Minkowski norm on $T_0^{1,0}\mathfrak{R}_{III}$.
	\end{proof}

	\begin{theorem}\label{KB-FIII}
		Let $\mathfrak{B}_l(Z;V)$ be defined by \eqref{s12} for $Z\in\mathfrak{R}_{III}$ and $V\in T_Z^{1,0}\mathfrak{R}_{III}$.
		Then
		\begin{eqnarray}
			F_{III}^2(Z;V)&=&\frac{q-1}{1+t}\left\{\mathfrak{B}_1(Z;V)
			+t\sqrt[k]{\mathfrak{B}_k(Z;V)}\right\}\label{FIII}
		\end{eqnarray}
		is a complete $\mbox{Aut}(\mathfrak{R}_{III})$-invariant K\"ahler-Berwald metric for any fixed $t\in[0,+\infty),k\in\mathbb{N}$ and
		$k\geq 2$.
	\end{theorem}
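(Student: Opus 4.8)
The plan is to reproduce, step for step, the argument used for Theorems \ref{KB-FI} and \ref{KB-FII}, feeding in the type $III$ structural data. First I would fix an arbitrary point $Z_0\in\mathfrak{R}_{III}$ and take the automorphism $\varPhi_{Z_0}\in\mbox{Aut}(\mathfrak{R}_{III})$ of \eqref{p-III-a} with $\varPhi_{Z_0}(Z_0)=0$, whose differential at $Z_0$ is $(\varPhi_{Z_0})_\ast(V)=AVA'$ with $\overline{A'}A=(I_q+Z_0\overline{Z_0})^{-1}$, as recorded in \eqref{d-IIIM}. Then I would set
\[
F_{III}^2(Z_0;V):=f_{III}^2\big((\varPhi_{Z_0})_\ast(V)\big),\qquad \forall V\in T_{Z_0}^{1,0}\mathfrak{R}_{III}.
\]
By Proposition \ref{p-III}, $f_{III}^2$ is an $\mbox{Iso}(\mathfrak{R}_{III})$-invariant strongly pseudoconvex complex Minkowski norm, so Theorem \ref{hfm} immediately guarantees that this formula is independent of the chosen $\varPhi_{Z_0}$ and defines an $\mbox{Aut}(\mathfrak{R}_{III})$-invariant strongly pseudoconvex complex Finsler metric.

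Next I would make the expression explicit. Since $f_{III}^2=(q-1)\pmb{f}^2$ and $\pmb{f}^2$ depends on its argument only through the traces $\mbox{tr}\{V\overline{V'}\}$ and $\mbox{tr}\{(V\overline{V'})^k\}$, substituting $(\varPhi_{Z_0})_\ast(V)=AVA'$, using $\overline{A'}A=(I_q+Z_0\overline{Z_0})^{-1}$ and the cyclicity of the trace, gives, exactly as in the derivation of \eqref{Bz0}, the identity
\[
\mathfrak{B}_l\big(0;(\varPhi_{Z_0})_\ast(V)\big)=\mathfrak{B}_l(Z_0;V),\qquad l=1,2,\dots;
\]
here one also invokes the skew symmetry $Z'=-Z$ for $Z\in\mathfrak{R}_{III}$, so that $I_q-Z\overline{Z'}=I_q+Z\overline{Z}>0$ and \eqref{s12} is consistent with the defining inequality of $\mathfrak{R}_{III}$. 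Consequently $F_{III}^2(Z_0;V)=\tfrac{q-1}{1+t}\{\mathfrak{B}_1(Z_0;V)+t\sqrt[k]{\mathfrak{B}_k(Z_0;V)}\}$, and since $Z_0$ was arbitrary, renaming $Z_0$ as $Z$ yields \eqref{FIII}.

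Finally I would establish the K\"ahler--Berwald property and completeness via Theorem \ref{th-2.1} and Remark \ref{RK1}. It suffices to verify
\[
\frac{\partial^2F_{III}^2}{\partial z_{rs}\partial\overline{v_{ij}}}\Big|_{(0;v)}=0
\]
for every nonzero tangent vector. In the type $III$ identification one has $Z_{ij}=z_{ij}$ and $V_{ij}=v_{ij}$ for $i<j$, and since the fiber coordinate $V$ is independent of the base coordinate $Z$, the whole $Z$-dependence of $\mathfrak{B}_1$ and $\mathfrak{B}_k$ enters only through the resolvents $(I_q+Z\overline{Z})^{-1}$ and $(I_q+\overline{Z}Z)^{-1}$, whose first $z$-derivatives vanish at $Z=0$ (the analogue of \eqref{A-k-a}--\eqref{B-l0}); hence the mixed derivative above vanishes at $(0;v)$. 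Theorem \ref{th-2.1} then gives that $F_{III}$ is an $\mbox{Aut}(\mathfrak{R}_{III})$-invariant K\"ahler--Berwald metric, and Remark \ref{RK1}, together with completeness of the Bergman metric $ds_{III}^2$, shows $F_{III}$ is complete.

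The step I expect to demand the most care is the explicit computation: pushing the trace identities through the skew-symmetric identification and, in particular, making sure that the extra automorphism $W=\widetilde{Z}$ of Remark \ref{rm-III} does not disturb the construction. The latter point, however, is already absorbed into the $\mbox{Iso}(\mathfrak{R}_{III})$-invariance proved in Proposition \ref{p-III} (using that $W\overline{W}$ and $Z\overline{Z}$ share the same eigenvalues and that $\pmb{f}$ is a symmetric function of those eigenvalues), so in the end the proof is a routine transcription of the type $I$ and type $II$ arguments.
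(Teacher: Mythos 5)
Your proposal is correct and follows essentially the same route as the paper's own proof: define $F_{III}^2(Z_0;V)=f_{III}^2((\varPhi_{Z_0})_\ast(V))$ with $(\varPhi_{Z_0})_\ast(V)=AVA'$, invoke Proposition \ref{p-III} and Theorem \ref{hfm} for invariance and strong pseudoconvexity, use the trace identities to get the explicit formula \eqref{FIII}, and verify the vanishing of $\partial^2F_{III}^2/\partial z_{ab}\partial\overline{v_{ij}}$ at $(0;v)$ so that Theorem \ref{th-2.1} and Remark \ref{RK1} yield the complete K\"ahler--Berwald property. Your handling of the extra automorphism $W=\widetilde{Z}$ of Remark \ref{rm-III} via the eigenvalue symmetry of $\pmb{f}$ is exactly how the paper disposes of that point as well.
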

	
	\begin{proof}
		Let $v$ be a tangent vector at $z_0$ which corresponds to $Z_0\in \mathfrak{R}_{III}$.
		Let $V=(V_{ij})$ be a skew symmetric matrix with
		$V_{ij}=-V_{ji}=v_{ij}$ for $i<j$. Then the differential $(\phi_{z_0})_\ast$ maps $v$ to a vector $u=(u_{ab})\in\mathbb{C}^{\frac{q(q-1)}{2}}$
		with
		\begin{equation}
			u_{ab}=\sum_{i<j}\frac{\partial w_{ab}}{\partial z_{ij}}v_{ij},\quad a<b.\label{d-III}
		\end{equation}
		
		If we denote $U=(U_{ab})\in\mathscr{M}(q)$ a skew symmetric matrix with
		$$
		U_{ab}=u_{ab},\quad \forall 1\leq a<b\leq q.
		$$
		Then we can rewrite \eqref{d-III}
		as $U=(\varPhi_{Z_0})_\ast(V)$ and it is easy to check that
		\begin{equation}
			(\varPhi_{Z_0})_\ast(V)=AVA',
		\end{equation}
		where $A$ satisfies \eqref{p-IIIa-a}.
		Thus
		\begin{eqnarray*}
			F_{III}^2(Z_0;V)
			&=&f_{III}^2(AVA')=\frac{q-1}{1+t}\left\{\mathfrak{B}_1(Z_0;V)
			+t\sqrt[k]{\mathfrak{B}_k(Z_0;V)}\right\}.
		\end{eqnarray*}
		Since $Z_0$ is any fixed point in $\mathfrak{R}_{III}$ and $\mathfrak{R}_{III}$ is homogeneous,
		changing $Z_0$ to $Z$ if necessary, we obtain \eqref{FIII}. That $F_{III}$ is an $\mbox{Aut}(\mathfrak{R}_{III})$-invariant
		strongly pseudoconvex complex Finsler metric follows immediately from Theorem \ref{hfm} and Proposition \ref{p-III}.
		
		Next since
		\begin{eqnarray*}
			\frac{\partial^2F_{III}^2}{\partial z_{ab}\partial\overline{v_{ij}}}
			=\frac{1}{2}\left(\frac{\partial^2F_{III}^2}{\partial Z_{ab}\partial \overline{V_{ij}}}
			-\frac{\partial^2F_{III}^2}{\partial Z_{ab}\partial \overline{V_{ji}}}
			-\frac{\partial^2F_{III}^2}{\partial Z_{ba}\partial \overline{V_{ij}}}
			+\frac{\partial^2F_{III}^2}{\partial Z_{ba}\partial \overline{V_{ji}}}
			\right)
		\end{eqnarray*}
		By \eqref{FIII}, it follows that
		\begin{equation}
			\frac{\partial^2F_{III}^2}{\partial z_{ab}\partial\overline{v_{ij}}}\Bigg|_{(0;v)}=0.\label{F_zv-3}
		\end{equation}
		Thus by Theorem \ref{th-2.1} and Remark \ref{RK1}, $F_{III}$ is a complete  K\"ahler-Berwald metric.
	\end{proof}
	\begin{theorem} \label{C-FIII}
		Let $F_{III}$ be defined by \eqref{FIII}. Then $F_{III}$ has negative holomorphic sectional curvature and nonpositive
		holomorphic bisectional curvature.
		More precisely, the holomorphic sectional curvature $K_{III}$ and the holomorphic bisectional
		curvature $B_{III}$ of $F_{III}$ are given respectively by
		\begin{equation}
			K_{III}(Z;V)=-\frac{4(q-1)}{1+t}\frac{\mathfrak{B}_2(Z;V)
				+t{\mathfrak{B}_{k}}^{\frac{1}{k}-1}\mathfrak{B}_{k+1}(Z;V)}{F_{III}^4(Z;V)},\label{hsc-F-III}
		\end{equation}
		and
		\begin{eqnarray}
			B_{III}(Z; V, W)
			&=&-\frac{4(q-1)}{1+t}\frac{\mathcal{B}_{1,1}(Z;V,W)+t\mathfrak{B}_{k}^{\frac{1}{k}-1}(Z;V)
				\mathcal{B}_{k,1}(Z;V,W)}{F_{III}^2(Z;V)F_{III}^2(Z;W)},\label{hbsc-F-III}
		\end{eqnarray}
		where $\mathfrak{B}_l(Z;V)$ and $\mathcal{B}_{i,j}(Z;V)$ are defined by \eqref{s12} and \eqref{s13} respectively
		for $Z\in\mathfrak{R}_{III}$ and
		nonzero vectors $V,W\in T_Z^{1,0}\mathfrak{R}_{III}$.
	\end{theorem}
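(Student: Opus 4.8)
The plan is to follow the scheme already used in the proofs of Theorem \ref{C-FI} and Theorem \ref{C-FII}. Since $F_{III}$ is an $\mbox{Aut}(\mathfrak{R}_{III})$-invariant K\"ahler-Berwald metric by Theorem \ref{KB-FIII} and, by \eqref{FIII}, $\varGamma_{;i}^l(0;v)=0$ (equivalently $\frac{\partial^2F_{III}^2}{\partial z_i\partial\overline{v_l}}(0;v)=0$, as in the proof of Theorem \ref{KB-FIII}), Proposition \ref{hsc} and Proposition \ref{bhsc} apply at the origin and reduce the computation to evaluating $\frac{\partial^2F_{III}^2}{\partial z_{ab}\partial\overline{z_{cd}}}(0;v)$ and contracting it against $v_{ab}\overline{v_{cd}}$ and $w_{ab}\overline{w_{cd}}$, respectively. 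The only point requiring extra care relative to type $I$ is that for type $III$ one has $Z=-Z'$, $V=-V'$, $Z_{ij}=z_{ij}$ and $V_{ij}=v_{ij}$ for $i<j$, so that derivatives in the intrinsic coordinates $z_{ab}$ are related to those in the ambient entries $Z_{ab}$ by $\partial/\partial z_{ab}=\partial/\partial Z_{ab}-\partial/\partial Z_{ba}$.

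First I would differentiate $\mathfrak{B}_l(Z;V)$ of \eqref{s12} with respect to $Z_{ab}$. Since $\mathfrak{B}_l$ depends on $Z$ only through $(I-Z\overline{Z'})^{-1}$ and $(I-\overline{Z'}Z)^{-1}$, each resulting term carries a factor $E_{ab}-E_{ba}$ (after the skew-symmetric chain rule) together with a factor $Z$ or $\overline{Z'}$, so all of them vanish at $Z=0$; this gives $\frac{\partial\mathfrak{B}_l}{\partial z_{ab}}(0;V)=\frac{\partial\mathfrak{B}_l}{\partial\overline{z_{cd}}}(0;V)=0$ for all $l$. Differentiating once more in $\overline{Z_{cd}}$ and setting $Z=0$ yields a formula of the same shape as \eqref{B-ij0}, namely $l\,\mbox{tr}\{(V\overline{V'})^{l-1}[(E_{ab}-E_{ba})(E_{dc}-E_{cd})V+V(E_{dc}-E_{cd})(E_{ab}-E_{ba})]\overline{V'}\}$. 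The heart of the matter is then the two contractions: using $\sum_{a<b}v_{ab}(E_{ab}-E_{ba})=V$ and $\sum_{c<d}\overline{v_{cd}}(E_{dc}-E_{cd})=\overline{V'}$ (and likewise for $W$), summing against $V_{ab}\overline{V_{cd}}$ over $a<b$, $c<d$ should yield $2l\,\mathfrak{B}_{l+1}(0;V)$, and summing against $W_{ab}\overline{W_{cd}}$ should yield $2l\,\mathcal{B}_{l,1}(0;V,W)$. Here the skew-symmetry $V=-V'$, $W=-W'$ makes the conjugate-transpose contribution $\mathcal{B}_{l,1}(0;\overline{V'},\overline{W'})$ coincide with $\mathcal{B}_{l,1}(0;V,W)$ (via $\mbox{tr}\{XY\}=\mbox{tr}\{YX\}$ and $\mbox{tr}\{C\}=\mbox{tr}\{C'\}$), which is precisely why the type $III$ bisectional curvature formula carries a single $\mathcal{B}$-term rather than the symmetrized sum appearing in Theorem \ref{C-FI}.

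Feeding these into Proposition \ref{hsc} applied to \eqref{FIII} — noting that $\frac{\partial^2\mathfrak{B}_k^{\frac{1}{k}}}{\partial z_{ab}\partial\overline{z_{cd}}}(0;V)=\frac{1}{k}\mathfrak{B}_k^{\frac{1}{k}-1}(0;V)\frac{\partial^2\mathfrak{B}_k}{\partial z_{ab}\partial\overline{z_{cd}}}(0;V)$, because the remaining term is proportional to the vanishing first derivatives of $\mathfrak{B}_k$ — gives $K_{III}(0;V)=-\frac{4(q-1)}{1+t}\frac{\mathfrak{B}_2(0;V)+t\mathfrak{B}_k^{\frac{1}{k}-1}(0;V)\mathfrak{B}_{k+1}(0;V)}{F_{III}^4(0;V)}$, and Proposition \ref{bhsc} gives the corresponding formula for $B_{III}(0;V,W)$. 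Since both curvatures are $\mbox{Aut}(\mathfrak{R}_{III})$-invariant and $\mathfrak{R}_{III}$ is homogeneous, \eqref{Bz0} and \eqref{Bz1} transport these identities to an arbitrary point $Z\in\mathfrak{R}_{III}$, giving \eqref{hsc-F-III} and \eqref{hbsc-F-III}. Finally $\mathfrak{B}_l(Z;V)>0$ for $V\ne0$ (each $(I-Z\overline{Z'})^{-1}V(I-\overline{Z'}Z)^{-1}\overline{V'}$ being positive semidefinite, and nonzero precisely when $V\ne0$) forces $K_{III}<0$, while $\mathcal{B}_{i,j}(Z;V,W)\ge0$ forces $B_{III}\le0$, with equality exactly when $\overline{W'}(I_q-Z\overline{Z'})^{-1}V=0$. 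I expect the main obstacle to be the bookkeeping in these two contractions: keeping track of the signs produced jointly by $V=-V'$ and the $E_{ab}-E_{ba}$ substitutions, and confirming that the extra cross term present in the type $I$ computation genuinely collapses onto the main term in the skew-symmetric setting.
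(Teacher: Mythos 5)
Your proposal is correct and follows essentially the same route as the paper: vanishing of the first derivatives of $\mathfrak{B}_l$ at the origin, the second-derivative formula with the skew-symmetric factors $E_{ab}-E_{ba}$, the contractions yielding $2l\mathfrak{B}_{l+1}$ and $2l\mathcal{B}_{l,1}$, and transport by homogeneity via Propositions \ref{hsc} and \ref{bhsc}. Your equality condition $\overline{W'}(I_q-Z\overline{Z'})^{-1}V=0$ is equivalent to the paper's $\overline{W}(I_q+Z\overline{Z})^{-1}V=0$ by skew-symmetry, so there is no discrepancy.
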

	\begin{proof}Recall that for any tangent vector $V$ at the point $Z\in\mathfrak{R}_{III}$, one has $ Z=-Z',V=-V'$ and
		$z_{ij}=Z_{ij}$, $v_{ij}=V_{ij}$ for
		$1\leq i<j\leq q$. Thus by \eqref{s12}, for $l=1,2,\cdots,$ we have
		\begin{eqnarray}
			\frac{\partial\mathfrak{B}_l}{\partial z_{ab}}
			&=&-l\mbox{tr}\Big\{\left[(I_q+Z\overline{Z})^{-1}V(I_q+\overline{Z}Z)^{-1}\overline{V'}\right]^{l-1}
			(I_q+Z\overline{Z})^{-1}[(E_{ab}-E_{ba})\overline{Z}(I_q+Z\overline{Z})^{-1}V\nonumber\\
			&&+V(I_q+\overline{Z}Z)^{-1}\overline{Z}(E_{ab}-E_{ba})](I_q+\overline{Z}Z)^{-1}\overline{V'}\Big\},\label{A3-k-a}
		\end{eqnarray}
		from which we get
		$$
		\frac{\partial\mathfrak{B}_l}{\partial z_{ab}}\Big|_{(0;V)}=0,\quad \frac{\partial\mathfrak{B}_l}{\partial \overline{z_{cd}}}\Big|_{(0;V)}=0.
		$$
		Differentiating \eqref{A3-k-a} with respect to $\overline{z_{cd}}$ and then setting $Z=0$, we get
		\begin{eqnarray*}
			\frac{\partial^2\mathfrak{B}_l}{\partial z_{ab}\partial \overline{z_{cd}}}(0;V)
			&=&l\mbox{tr}\Big\{\left[V\overline{V}\right]^{l-1}
			\left[(E_{ab}-E_{ba})(E_{dc}-E_{cd})V+V(E_{dc}-E_{cd})(E_{ab}-E_{ba})\right]\overline{V'}\Big\}.\label{A3-k-ab}
		\end{eqnarray*}
		Now for any  nonzero tangent vector $V\in T_0^{1,0}\mathfrak{R}_{III}$, we have
		\begin{eqnarray*}
			\sum_{a< b}\sum_{c< d}\frac{\partial^2\mathfrak{B}_l}{\partial z_{ab}\partial \overline{z_{cd}}}(0;V)V_{ab}\overline{V_{cd}}&=&2l\mathfrak{B}_{l+1}(0;V).
		\end{eqnarray*}
		Thus by Proposition \ref{hsc}, for any nonzero tangent vector $V\in T_0^{1,0}\mathfrak{R}_{III}$, we have
		\begin{eqnarray}
			K_{III}(0;V)&=&-\frac{4(q-1)}{1+t}\frac{\mathfrak{B}_2(0;V)
				+t\mathfrak{B}_k^{\frac{1}{k}-1}(0;V)\mathfrak{B}_{k+1}(0;V)}{F_{III}^4(0;V)}.\label{hsc3-b1}
		\end{eqnarray}
		Since the holomorphic sectional curvature is holomorphic invariant, and by \eqref{Bz0}, for any non-zero tangent vector $V\in T_Z^{1,0}\mathfrak{R}_{III}$, we have \eqref{hsc-F-III}.
		
		Notice $\mathfrak{B}_{l}(Z;V)>0,$ we always have $K_{III}(Z;V)<0$.
		
		Similarly, for any nonzero tangent vectors $V,W\in T_0^{1,0}\mathfrak{R}_{III}$, we have
		\begin{eqnarray*}
			\sum_{a< b}\sum_{c< d}\frac{\partial^2\mathfrak{B}_l}{\partial Z_{ab}\partial \overline{Z_{cd}}}(0;V)W_{ab}\overline{W_{cd}}&=&2l\mathcal{B}_{l,1}(0;V,W).
		\end{eqnarray*}
		Thus by Proposition \ref{bhsc}, we have
		\begin{eqnarray*}
			B_{III}(0;V,W)&=&-\frac{4(q-1)}{1+t}\frac{\mathcal{B}_{1,1}(0;V,W)
				+t{\mathfrak{B}_{k}^{\frac{1}{k}-1}(0;V)}\mathcal{B}_{k,1}(0;V,W)}{F_{III}^2(0;V)F_{III}^2(0;W)}.\nonumber\\
		\end{eqnarray*}
		Since the holomorphic bisectional curvature is holomorphic-invariant, by \eqref{Bz0} and \eqref{Bz1}
		we have \eqref{hbsc-F-III}.
		Since $ \mathcal{B}_{i,j}(Z;V,W)\geq0, $ we have
		$ B_{III}(Z; V, W)\leq0, $
		with equality holds if and only if
		$$\overline{W}(I_q+Z\overline{Z})^{-1}V=0.$$
	\end{proof}

	\subsection{The supremum and the infimum of holomorphic sectional curvature and bisectional curvature}\label{subsection-3.5}
	
	In \cite{Look3}, Q. K. Lu obtained the supremum and the infimum of holomorphic sectional curvatures for the Bergman metrics on the classical domains. For the  holomorphic sectional curvature of $F_{I}$, $F_{II}$, $F_{III}$, we have
	\begin{theorem} \label{thm-4.4}
		Let  $ \mathfrak{R}_I(m,n), \mathfrak{R}_{II}(p)$ and $ \mathfrak{R}_{III}(q)$ be endowed with the complex Finsler metrics $F_I, F_{II},F_{III}$ defined by
		\eqref{FI}, \eqref{FII} and \eqref{FIII},  respectively. Then the infimum
		and the supremum of the holomorphic sectional curvatures of $(\mathfrak{R}_I(m,n),F_I),$ $(\mathfrak{R}_{II}(p),F_{II}),$ and $(\mathfrak{R}_{III}(q),F_{III})$  are given respectively by
		\begin{eqnarray}
			L(\mathfrak{R}_I(m,n))&=&-\frac{4}{m+n}, \quad\quad\quad\quad\; U(\mathfrak{R}_I(m,n))=\frac{-4}{m+n}\cdot \frac{1+t}{m+t\sqrt[k]{m}};\label{zyc-1}\\
			L(\mathfrak{R}_{II}(p))&=&-\frac{4}{p+1},\quad\quad\quad\quad\quad\;\;\,U(\mathfrak{R}_{II}(p))=\frac{-4}{p+1}\cdot\frac{1+t}{p+t\sqrt[k]{p}};\label{zyc-2}\\
			L(\mathfrak{R}_{III}(q))&=&-\frac{4}{q-1}\cdot\frac{1+t}{2+t\sqrt[k]{2}},\quad
			U(\mathfrak{R}_{III}(q))=\frac{-4}{q-1}\cdot\frac{1+t}{2[\frac{q}{2}]+t\sqrt[k]{2[\frac{q}{2}]}}. \label{zyc-3}
		\end{eqnarray}
	\end{theorem}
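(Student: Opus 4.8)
The plan is to reduce everything to the origin, diagonalize, and then isolate a single elementary scalar inequality which governs all three domains at once.

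\textbf{Step 1 (reduction and diagonalization).} Since the holomorphic sectional curvature is $\mbox{Aut}(\mathfrak{R}_A)$-invariant and $\mbox{Aut}(\mathfrak{R}_A)$ acts transitively, the infimum and supremum of $K_A(Z;V)$ over all $Z\in\mathfrak{R}_A$ and $0\neq V\in T_Z^{1,0}\mathfrak{R}_A$ equal those of $K_A(0;V)$ over $0\neq V\in T_0^{1,0}\mathfrak{R}_A$. At $Z=0$ one has $\mathfrak{B}_l(0;V)=\mbox{tr}\{(V\overline{V'})^l\}$, which depends only on the eigenvalues of the positive semi-definite Hermitian matrix $V\overline{V'}$. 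For $A=I$ (resp. $A=II$) these eigenvalues are the squared singular values $a=(a_1,\dots,a_m)$ (resp. $(a_1,\dots,a_p)$), which range over all nonnegative multisets, so $\mathfrak{B}_l(0;V)=\sum_i a_i^l$. For $A=III$, using the canonical form of a skew-symmetric matrix under $V\mapsto AVA'$ with $A$ unitary, the eigenvalues of $V\overline{V'}$ occur in equal pairs, so $\mathfrak{B}_l(0;V)=2\sum_{i=1}^{[q/2]}b_i^l$, with $b=(b_1,\dots,b_{[q/2]})$ again ranging over all nonnegative multisets.

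\textbf{Step 2 (normal form of the curvature).} Substituting this into \eqref{hsc-F-I} and its type-$II$, $III$ analogues and cancelling, the curvature takes the shape
\[
K_A(0;V)=-\frac{4(1+t)}{c_A}\,h_t(a)\quad(A=I,II),\qquad
h_t(a):=\frac{\sum_i a_i^2+t\big(\sum_i a_i^k\big)^{\frac1k-1}\sum_i a_i^{k+1}}{\big(\sum_i a_i+t(\sum_i a_i^k)^{\frac1k}\big)^2},
\]
with $c_I=m+n$, $c_{II}=p+1$; and for $A=III$, pulling the factors of $2$ out of each $\mathfrak{B}_l$,
\[
K_{III}(0;V)=-\frac{2(1+t)}{q-1}\,h_{\tilde t}(b),\qquad \tilde t:=t\,2^{\frac1k-1},
\]
where $h_{\tilde t}$ has the same form as $h_t$ but with $t$ replaced by $\tilde t$ and with $r:=[q/2]$ variables. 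Thus everything reduces to finding the extrema of the single function $h_t$ on $[0,\infty)^r\setminus\{0\}$.

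\textbf{Step 3 (the key inequality).} The heart of the argument is the estimate: for all integers $r\geq1$, all $t\geq0$, all integers $k\geq2$ and all $a\in[0,\infty)^r\setminus\{0\}$,
\[
\frac{1}{r+t\,r^{1/k}}\ \leq\ h_t(a)\ \leq\ \frac{1}{1+t},
\]
with the left equality precisely when all the $a_i$ are equal, and the right equality precisely when exactly one $a_i$ is nonzero. I would prove this by clearing denominators: each of the two inequalities becomes a polynomial inequality of degree two in $t$, which I check coefficient by coefficient. Writing $P=\sum a_i$, $Q=\sum a_i^2$, $R=\sum a_i^k$, $S=\sum a_i^{k+1}$, the $t^0$-parts reduce to $Q\leq P^2\leq rQ$ (Cauchy--Schwarz); the $t^2$-parts reduce to $S\leq R^{1+1/k}\leq r^{1/k}S$ (monotonicity of $\ell^p$-norms and the power-mean inequality, one in each direction); and the $t^1$-parts follow from the bounds $Q\leq PR^{1/k}$ and $R^{1/k-1}S\leq PR^{1/k}$ for the upper estimate, and from AM--GM combined with the $t^0$- and $t^2$-facts for the lower estimate. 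A short inspection shows that in each of the two extremal configurations all the component inequalities hold with equality simultaneously.

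\textbf{Step 4 (conclusion).} Since $h_t$ is $0$-homogeneous it is constant on rays, so on the compact slice $\{\sum a_i=1\}$ the bounds $\frac{1}{1+t}$ and $\frac{1}{r+tr^{1/k}}$ are actually attained; the extremizing matrices — one of rank one, resp. one with all singular values equal (and, for type $III$, a rank-two skew matrix, resp. a skew matrix with all $b_i$ equal) — lie in the relevant tangent space because $m,p\geq2$ and $[q/2]\geq2$. Plugging $r=m$, $c_I=m+n$ yields \eqref{zyc-1}; $r=p$, $c_{II}=p+1$ yields \eqref{zyc-2}; and for type $III$, using $2(1+\tilde t)=2+t\sqrt[k]{2}$ and $2\big([q/2]+\tilde t\,[q/2]^{1/k}\big)=2[q/2]+t\sqrt[k]{2[q/2]}$ converts $-\tfrac{2(1+t)}{q-1}\cdot\tfrac1{1+\tilde t}$ and $-\tfrac{2(1+t)}{q-1}\cdot\tfrac1{r+\tilde t r^{1/k}}$ into \eqref{zyc-3}. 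I expect Step 3 to be the main obstacle: organizing the two rational inequalities into a clean coefficient-wise comparison in $t$, identifying precisely which power-mean inequality (and in which direction) controls each coefficient, and carrying out the equality analysis; the factor-of-$2$ bookkeeping and the auxiliary parameter $\tilde t$ in the type-$III$ reduction also need some care.
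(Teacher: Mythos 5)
Your proposal is correct and follows essentially the same route as the paper: reduce to the origin by invariance, diagonalize $V$ so that $\mathfrak{B}_l(0;V)$ becomes a power sum of the squared singular values, and then establish the two rational inequalities using exactly the ingredients the paper uses (Cauchy--Schwarz for the $t^0$ terms, monotonicity of $\ell^p$-norms and H\"older/power means for the $t^2$ terms, and AM--GM plus the elementary bounds $Q\le PR^{1/k}$, $S\le PR$ for the $t^1$ terms), finishing by exhibiting the rank-one and equal-singular-value extremizers. Your coefficient-wise organization in $t$ and the substitution $\tilde t=t\,2^{1/k-1}$ for type $III$ are only cosmetic repackagings of the paper's grouping of the squared denominator and its explicit carrying of the factors of $2$.
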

	
	\begin{proof} We first show \eqref{zyc-1}.
		Note that $V\in T_0^{1,0}\mathfrak{R}_I$ is an $m\times n$ matrix over the complex number field $\mathbb{C}$,
		thus by Theorem 1.1 on page $299$ in \cite{Look3},
		there exists  $A\in U(m)$ and $B\in U(n)$ such that
		$$
		V=A\begin{pmatrix}
			\lambda_1 & 0 & \cdots & 0 & 0 & \cdots & 0 \\
			0 & \lambda_2 & \cdots & 0 & 0 & \cdots & 0 \\
			\vdots & \vdots & \cdots & \vdots & \vdots & \cdots & \vdots \\
			0 & 0 &  & \lambda_m& 0 &  & 0 \\
		\end{pmatrix}B,\quad \lambda_1\geq \cdots\geq \lambda_m\geq 0,
		$$
		where $\lambda_1^2,\cdots, \lambda_m^2$ are eigenvalues of $V\overline{V'}$. Since $K_I(0;AVB)=K_I(0;V)$ for any $A\in U(m), B\in U(n)$ and $0\neq V\in T_0^{1,0}\mathfrak{R}_I$,
		it suffices to assume that
		$$
		V=\begin{pmatrix}
			\lambda_1 & 0 & \cdots & 0 & 0 & \cdots & 0 \\
			0 & \lambda_2 & \cdots & 0 & 0 & \cdots & 0 \\
			\vdots & \vdots &  & \vdots & \vdots & & \vdots \\
			0 & 0 & \cdots & \lambda_m& 0 & \cdots & 0 \\
		\end{pmatrix},\quad \lambda_1\geq \cdots\geq \lambda_m\geq 0,\quad\lambda_1\neq 0.
		$$
		Setting $x_i:=\lambda_i^2$ for $i=1,\cdots,m$, then by \eqref{hsc-F-I} we have
		\begin{equation}
			K_I(0;V)=-\frac{4(1+t)}{m+n}\cdot\frac{\sum\limits_{i=1}^mx_i^{2}+t\left(\sum\limits_{i=1}^m x_i^{k}\right)^{\frac{1}{k}-1}\left(\sum\limits_{i=1}^m x_i^{k+1}\right)}
			{\left[\sum\limits_{i=1}^m x_i+t\left(\sum\limits_{i=1}^m x_i^{k}\right)^\frac{1}{k}\right]^2}.\label{cei}
		\end{equation}
		Rewriting
		\begin{eqnarray*}
			\left[\sum\limits_{i=1}^m x_i+t\left(\sum\limits_{i=1}^m x_i^{k}\right)^\frac{1}{k}\right]^2
			&=&\left(\sum\limits_{i=1}^m x_i\right)^2+t\left(\sum_{i=1}^mx_i\right)\left(\sum\limits_{i=1}^m x_i^{k}\right)^\frac{1}{k}\\
			&&+t\left(\sum_{i=1}^mx_i^k\right)^{\frac{1}{k}}\left\{\left(\sum_{i=1}^mx_i\right)+t\left(\sum_{i=1}^mx_i^k\right)^{\frac{1}{k}}\right\}
		\end{eqnarray*}
		and using the following inequalities
		$$
		\left(\sum\limits_{i=1}^m x_i\right)^2+t\left(\sum\limits_{i=1}^mx_i\right)\left(\sum\limits_{i=1}^m x_i^{k}\right)^\frac{1}{k}\geq (1+t)\sum_{i=1}^mx_i^2
		$$
		and
		$$
		\left(\sum_{i=1}^mx_i^k\right)\left\{\left(\sum\limits_{i=1}^mx_i\right)+t\left(\sum\limits_{i=1}^mx_i^k\right)^{\frac{1}{k}}\right\}\geq (1+t)\sum_{i=1}^mx_i^{k+1},
		$$
		we get
		\begin{eqnarray*}
			\frac{\sum\limits_{i=1}^mx_i^{2}}{\left(\sum\limits_{i=1}^m x_i\right)^2+t\left(\sum\limits_{i=1}^mx_i\right)\left(\sum\limits_{i=1}^m x_i^{k}\right)^\frac{1}{k}}\leq \frac{1}{1+t},\quad
			\frac{t\left(\sum\limits_{i=1}^m x_i^{k}\right)^{\frac{1}{k}-1}\left(\sum\limits_{i=1}^m x_i^{k+1}\right)}{t\left(\sum\limits_{i=1}^mx_i^k\right)^{\frac{1}{k}}\left\{\left(\sum\limits_{i=1}^mx_i\right)+t\left(\sum\limits_{i=1}^mx_i^k\right)^{\frac{1}{k}}\right\}}\leq \frac{1}{1+t}
		\end{eqnarray*}
		for $t\in(0,+\infty)$.
		Therefore
		\begin{eqnarray}
			K_I(0;V)&\geq&-\frac{4}{m+n}\cdot \frac{1+t}{1+t}=-\frac{4}{m+n}\label{hc-l}
		\end{eqnarray}
		for any $t\in(0,+\infty)$. It is obvious that \eqref{hc-l} also holds for $t=0$.
		
		On the other hand,  the function $\left(\displaystyle\sum_{i=1}^mx_i^l\right)^{\frac{1}{l}}$ (considered as a function of $l$ for any fixed non-negative real numbers $x_1,\cdots,x_m$) is monotone decreasing with respect to $l$, it follows that
		\begin{eqnarray*}
			\left(\sum_{i=1}^mx_i^{k+1}\right)^{\frac{1}{k+1}}\leq \left(\sum_{i=1}^mx_i^k\right)^{\frac{1}{k}}.
		\end{eqnarray*}
		Using the H\"older inequality, we have
		$$
		\sum_{i=1}^mx_i^k\leq \left(\sum_{i=1}^m 1^{k+1}\right)^{\frac{1}{k+1}}\left(\sum_{i=1}^mx_i^{k+1}\right)^{\frac{k}{k+1}}=m^{\frac{1}{k+1}}\left(\sum_{i=1}^mx_i^{k+1}\right)^{\frac{k}{k+1}},
		$$
		from which yields
		$$
		\left(\sum_{i=1}^mx_i^k\right)^{\frac{k+1}{k}}\leq \sqrt[k]{m}\sum_{i=1}^mx_i^{k+1}.
		$$
		So that
		\begin{equation}
			\sqrt[k]{m}\left(\sum_{i=1}^mx_i^k\right)^{\frac{1}{k}-1}\left(\sum_{i=1}^mx_i^{k+1}\right)\geq \left(\sum_{i=1}^mx_i^k\right)^{\frac{2}{k}},\label{iq-a}
		\end{equation}
		which together with the following inequality
		\begin{eqnarray}
			\sum\limits_{i=1}^mx_i^{2}\leq\left(\sum\limits_{i=1}^mx_i\right)^2
			\leq m\sum\limits_{i=1}^mx_i^2 \label{iq-b}
		\end{eqnarray}
		implies that
		\begin{eqnarray*}
			&&(m+t\sqrt[k]{m})\left\{\sum\limits_{i=1}^mx_i^{2}+t\left(\sum\limits_{i=1}^m x_i^{k}\right)^{\frac{1}{k}-1}\left(\sum\limits_{i=1}^m x_i^{k+1}\right)\right\}-\left[\sum\limits_{i=1}^m x_i+t\left(\sum\limits_{i=1}^m x_i^{k}\right)^\frac{1}{k}\right]^2\\
			&\geq&t\left\{\sqrt[k]{m}\sum\limits_{i=1}^mx_i^{2}
			+m\left(\sum\limits_{i=1}^m x_i^{k}\right)^{\frac{1}{k}-1}\left(\sum\limits_{i=1}^mx_i^{k+1}\right)
			-2\left(\sum\limits_{i=1}^mx_i\right)\left(\sum\limits_{i=1}^mx_i^{k}\right)^\frac{1}{k}\right\}\\
			&\geq&2t\sqrt{m\left(\sum\limits_{i=1}^mx_i^{2}\right)
				\cdot \sqrt[k]{m}\left(\sum\limits_{i=1}^mx_i^{k}\right)^{\frac{1}{k}-1}\left(\sum\limits_{i=1}^mx_i^{k+1}\right)}
			-2t\left(\sum\limits_{i=1}^mx_i\right)\left(\sum\limits_{i=1}^mx_i^{k}\right)^\frac{1}{k}\\	
			%&\geq&2t\left(\sum\limits_{i=1}^mx_i\right)\left(\sum\limits_{i=1}^mx_i^{k}\right)^\frac{1}{k}-2t\left(\sum\limits_{i=1}^mx_i\right)\left(\sum\limits_{i=1}^mx_i^{k}\right)^\frac{1}{k}\\
			&\geq&0,
		\end{eqnarray*}
		where in the last inequality we use again \eqref{iq-a} and \eqref{iq-b}.
		Therefore
		\begin{eqnarray}
			K_I(0;V)\leq -\frac{4}{m+n}\cdot \frac{1+t}{m+t\sqrt[k]{m}}.
		\end{eqnarray}
		Notice that $ K_{I}(0;E_{11})=\frac{-4}{m+n} $ and $ K_{I}(0;E_{11}+\cdots+E_{mm})=\frac{-4}{m+n}\cdot\frac{1+t}{m+t\sqrt[k]{m}}.$ Since $\mathfrak{R}_I$ is transitive, the lower and upper bounds of $K_I(Z;V)$ at any point $Z\in\mathfrak{R}_I$ and along any nonzero tangent direction $V\in T_Z^{1,0}\mathfrak{R}_I$ are equal to those at the point $Z=0$ and along any nonzero tangent direction $V\in T_0^{1,0}\mathfrak{R}_I$.
		Hence we have \eqref{zyc-1}.
		Morever,  $  -\frac{4}{m+n}$  and $  -\frac{4}{m+n}\cdot \frac{1+t}{m+t\sqrt[k]{m}} $ are infimum and supremum of $ K_{I} $ respectively.
		
		Next we show \eqref{zyc-2}.
		In this case, each $V\in T_0^{1,0}\mathfrak{R}_{II}$ is a $p\times p$ symmetric matrix over the complex number field $\mathbb{C}$, and $K_{II}(0;A'VA)=K_{II}(0;V)$ for any nonzero vector $V\in T_0^{1,0}\mathfrak{R}_{II}$ and $A\in U(p)$. Thus by Theorem 1.2 on page $301$ in \cite{Look3}, it suffices to take
		$$
		V=\begin{pmatrix}
			\lambda_1 & 0 & \cdots & 0 \\
			0 & \lambda_2 & \cdots & 0 \\
			\vdots & \vdots &  & \vdots\\
			0 & 0 & \cdots & \lambda_p \\
		\end{pmatrix},\quad (\lambda_1\geq \lambda_2\geq \cdots\geq \lambda_p\geq 0,\lambda_1>0).
		$$
		Now setting $x_i:=\lambda_i^2$ for $i=1,\cdots,p$, it follows from \eqref{hsc-F-II} that
		\begin{equation}
			K_{II}(0;V)=-\frac{4(1+t)}{p+1}\cdot\frac{\sum\limits_{i=1}^px_i^{2}+t\left(\sum\limits_{i=1}^p x_i^{k}\right)^{\frac{1}{k}-1}\left(\sum\limits_{i=1}^p x_i^{k+1}\right)}
			{\left[\sum\limits_{i=1}^p x_i+t\left(\sum\limits_{i=1}^p x_i^{k}\right)^\frac{1}{k}\right]^2}.\label{cei-II}
		\end{equation}
		By a similar argument as the proof of \eqref{zyc-1}, we obtain \eqref{zyc-2}.
		Notice that $ K_{II}(0;E_{11})=\frac{-4}{p+1} $ and $ K_{II}(0;E_{11}+\cdots+E_{pp})=\frac{-4}{p+1}\cdot\frac{1+t}{p+t\sqrt[k]{p}}.$
		Thus  $  -\frac{4}{p+1}$  and $  -\frac{4}{p+1}\cdot \frac{1+t}{p+t\sqrt[k]{p}} $ are infimum and supremum of $ K_{II} $ respectively.
		
		Now we  show \eqref{zyc-3}. In this case, each $V\in T_0^{1,0}\mathfrak{R}_{III}$ is a $q\times q$ skew-symmetric matrix over the complex number field $\mathbb{C}$,
		and
		$K_{III}(0;A'VA)=K_{III}(0;V)$ for any nonzero vector $V\in T_0^{1,0}\mathfrak{R}_{III}$ and any $A\in U(q)$,
		thus by Theorem 1.4 on page 303 of \cite{Look3}, it suffices to take
		\begin{equation}
			V=\left\{
			\begin{array}{ll}
				\begin{pmatrix}
					0 & \lambda_1 \\
					-\lambda_1 & 0 \\
				\end{pmatrix}
				\dot{+}\cdots\dot{+}
				\begin{pmatrix}
					0 & \lambda_\nu \\
					-\lambda_\nu & 0 \\
				\end{pmatrix}
				, & \hbox{for}\;q=2\nu \\
				\begin{pmatrix}
					0 & \lambda_1 \\
					-\lambda_1 & 0 \\
				\end{pmatrix}
				\dot{+}\cdots\dot{+}
				\begin{pmatrix}
					0 & \lambda_\nu \\
					-\lambda_\nu & 0 \\
				\end{pmatrix}\dot{+}0& \hbox{for}\;q=2\nu+1,
			\end{array}
			\right.\label{V-d}
		\end{equation}
		where $\dot{+}$ mean direct sum of matrices, $\lambda_1\geq \lambda_2\geq \cdots\geq \lambda_\nu\geq 0$ and $\lambda_1>0$. As before, we denote $x_i:=\lambda_i^2$ for $i=1,\cdots,\nu$.
		Substituting \eqref{V-d} into \eqref{hsc-F-III}, we obtain
		$$
		K_{III}(0;V)=-\frac{4(1+t)}{q-1}\cdot \frac{2\sum\limits_{i=1}^\nu x_i^2+t\left(2\sum\limits_{i=1}^\nu x_i^k\right)\overline{}^{\frac{1}{k}-1}\cdot \left(2\sum\limits_{i=1}^\nu x_i^{k+1}\right)}{\left\{2\sum\limits_{i=1}^\nu x_i+t\left(2\sum\limits_{i=1}^\nu x_i^{k}\right)^{\frac{1}{k}}\right\}^2},\quad\forall 0\neq V\in T_0^{1,0}\mathfrak{R}_{III}.
		$$
		
		Rewriting
		\begin{eqnarray*}
			\left[2\sum\limits_{i=1}^\nu x_i+t\left(2\sum\limits_{i=1}^\nu x_i^{k}\right)^\frac{1}{k}\right]^2
			&=&\left(2\sum\limits_{i=1}^\nu x_i\right)^2+t\left(2\sum_{i=1}^\nu x_i\right)\left(2\sum\limits_{i=1}^\nu x_i^{k}\right)^\frac{1}{k}\\
			&&+t\left(2\sum_{i=1}^\nu x_i^k\right)^{\frac{1}{k}}\left\{\left(2\sum_{i=1}^\nu x_i\right)+t\left(2\sum_{i=1}^\nu x_i^k\right)^{\frac{1}{k}}\right\}
		\end{eqnarray*}
		and using the following inequalities
		$$
		\left(2\sum\limits_{i=1}^\nu x_i\right)^2+t\left(2\sum\limits_{i=1}^\nu x_i\right)\left(2\sum\limits_{i=1}^\nu x_i^{k}\right)^\frac{1}{k}
		\geq (2+t\sqrt[k]{2})\left(2\sum_{i=1}^\nu x_i^2\right)
		$$
		and
		$$
		\left(2\sum_{i=1}^\nu x_i^k\right)\left\{\left(2\sum\limits_{i=1}^\nu x_i\right)+t\left(2\sum\limits_{i=1}^\nu x_i^k\right)^{\frac{1}{k}}\right\}
		\geq (2+t\sqrt[k]{2})\left(2\sum_{i=1}^\nu x_i^{k+1}\right),
		$$
		we get
		\begin{eqnarray*}
			\frac{2\sum\limits_{i=1}^\nu x_i^{2}}{\left(2\sum\limits_{i=1}^\nu x_i\right)^2+t\left(2\sum\limits_{i=1}^\nu x_i\right)\left(2\sum\limits_{i=1}^\nu x_i^{k}\right)^\frac{1}{k}}&\leq& \frac{1}{2+t\sqrt[k]{2}},\\
			\frac{t\left(2\sum\limits_{i=1}^\nu x_i^{k}\right)^{\frac{1}{k}-1}\left(2\sum\limits_{i=1}^\nu x_i^{k+1}\right)}{t\left(2\sum\limits_{i=1}^\nu x_i^k\right)^{\frac{1}{k}}\left\{\left(2\sum\limits_{i=1}^\nu x_i\right)+t\left(2\sum\limits_{i=1}^\nu x_i^k\right)^{\frac{1}{k}}\right\}}&\leq& \frac{1}{2+t\sqrt[k]{2}}
		\end{eqnarray*}
		for $t\in(0,+\infty)$.
		Therefore
		\begin{eqnarray}
			K_{III}(0;V)&\geq&-\frac{4}{q-1}\cdot \frac{1+t}{2+t\sqrt[k]{2}}\label{hc-l-III}
		\end{eqnarray}
		for any $t\in(0,+\infty)$. It is obvious that \eqref{hc-l-III} also holds for $t=0$.
		
		Similarly, the function $\left(2\displaystyle\sum_{i=1}^\nu x_i^l\right)^{\frac{1}{l}}$ (considered as a function of $l$ for any fixed $x_1,\cdots,x_m$) is also monotone decreasing with respect to $l$, thus
		\begin{eqnarray*}
			\left(2\sum_{i=1}^\nu x_i^{k+1}\right)^{\frac{1}{k+1}}\leq \left(2\sum_{i=1}^\nu x_i^k\right)^{\frac{1}{k}}.
		\end{eqnarray*}
		Using the H\"older inequality, we have
		$$
		2\sum_{i=1}^\nu x_i^k\leq \left(\sum_{i=1}^\nu 1^{k+1}\right)^{\frac{1}{k+1}}\left(\sum_{i=1}^\nu (2x_i^k)^{\frac{k+1}{k}}\right)^{\frac{k}{k+1}}=\left\{2\left[\frac{q}{2}\right]\right\}^{\frac{1}{k+1}}\left(2\sum_{i=1}^\nu  x_i^{k+1}\right)^{\frac{k}{k+1}},
		$$
		from which yields
		$$
		\left(2\sum_{i=1}^\nu x_i^k\right)^{\frac{k+1}{k}}\leq \sqrt[k]{2\left[\frac{q}{2}\right]}\left(2\sum_{i=1}^\nu x_i^{k+1}\right).
		$$
		So that
		\begin{equation}
			\sqrt[k]{2\left[\frac{q}{2}\right]}\left(2\sum_{i=1}^\nu x_i^k\right)^{\frac{1}{k}-1}\left(2\sum_{i=1}^\nu x_i^{k+1}\right)\geq \left(2\sum_{i=1}^\nu x_i^k\right)^{\frac{2}{k}},\label{iq-a-III}
		\end{equation}
		which together with the following inequality
		\begin{eqnarray}
			4\sum\limits_{i=1}^\nu x_i^{2}\leq\left(2\sum\limits_{i=1}^\nu x_i\right)^2
			\leq 2\left[\frac{q}{2}\right]\left(2\sum\limits_{i=1}^\nu x_i^2\right) \label{iq-b-III}
		\end{eqnarray}
		implies that
		\begin{eqnarray*}
			&&\left(2\left[\frac{q}{2}\right]+t\sqrt[k]{2\left[\frac{q}{2}\right]}\right)
			\left\{2\sum\limits_{i=1}^\nu x_i^{2}+t\Big(2\sum\limits_{i=1}^\nu x_i^{k}\Big)^{\frac{1}{k}-1}\Big(2\sum\limits_{i=1}^\nu x_i^{k+1}\Big)\right\}
			-\left[2\sum\limits_{i=1}^\nu x_i+t\Big(2\sum\limits_{i=1}^\nu x_i^{k}\Big)^{\frac{1}{k}}\right]^2
			%&\geq&t\left\{\sqrt[k]{2\left[\frac{q}{2}\right]}\left(2\sum\limits_{i=1}^\nu x_i^{2}\right)
			%	+2\left[\frac{q}{2}\right]\left(2\sum\limits_{i=1}^\nu x_i^{k}\right)^{\frac{1}{k}-1}\left(2\sum\limits_{i=1}^\nu x_i^{k+1}\right)
			%	-2\left(2\sum\limits_{i=1}^\nu x_i\right)\left(2\sum\limits_{i=1}^\nu x_i^{k}\right)^{\frac{1}{k}}\right\}\\
			%&\geq&2t\sqrt{2\left[\frac{q}{2}\right]\left(2\sum\limits_{i=1}^\nu x_i^{2}\right)
			%	\cdot \sqrt[k]{2\left[\frac{q}{2}\right]}\left(2\sum\limits_{i=1}^\nu x_i^{k}\right)^{\frac{1}{k}-1}\left(2\sum\limits_{i=1}^\nu x_i^{k+1}\right)}
		%	-2t\left(2\sum\limits_{i=1}^\nu x_i\right)\left(2\sum\limits_{i=1}^\nu x_i^{k}\right)^{\frac{1}{k}}\\	
		\geq0,
	\end{eqnarray*}
	where in the last inequality we use again \eqref{iq-a-III} and \eqref{iq-b-III}.
	Therefore
	\begin{eqnarray}
		K_{III}(0;V)\leq -\frac{4}{q-1}\cdot \frac{1+t}{2\left[\frac{q}{2}\right]+t\sqrt[k]{2\left[\frac{q}{2}\right]}}.
	\end{eqnarray}
	Notice that $ K_{III}(0;E_{12}-E_{21})=-\frac{4}{q-1}\cdot \frac{1+t}{2+t\sqrt[k]{2}} $ and for
	$$
	V_0=\left\{
	\begin{array}{ll}
		\begin{pmatrix}
			0 & 1 \\
			-1 & 0 \\
		\end{pmatrix}
		\dot{+}\cdots\dot{+}
		\begin{pmatrix}
			0 & 1 \\
			-1 & 0 \\
		\end{pmatrix}
		, & \hbox{for}\;q=2\nu \\
		\begin{pmatrix}
			0 & 1 \\
			-1 & 0 \\
		\end{pmatrix}
		\dot{+}\cdots\dot{+}
		\begin{pmatrix}
			0 & 1 \\
			-1 & 0 \\
		\end{pmatrix}\dot{+}0& \hbox{for}\;q=2\nu+1,
	\end{array}
	\right.
	$$
	$ K_{III}(0;V_0)= -\frac{4}{q-1}\cdot \frac{1+t}{2\left[\frac{q}{2}\right]+t\sqrt[k]{2\left[\frac{q}{2}\right]}}.$
	Thus  $  -\frac{4}{q-1}\cdot \frac{1+t}{2+t\sqrt[k]{2}} $  and $  -\frac{4}{q-1}\cdot \frac{1+t}{2\left[\frac{q}{2}\right]+t\sqrt[k]{2\left[\frac{q}{2}\right]}} $ are infimum and supremum of $ K_{III} $ respectively.
\end{proof}

As for the supremum and the infimum of holomorphic bisectional curvature for $F_A (A=I,II,III)$, we have
\begin{theorem} \label{thm4}
	Let  $\mathfrak{R}_I(m,n), \mathfrak{R}_{II}(p)$ and $\mathfrak{R}_{III}(q)$ be endowed with the $\mbox{Aut}(\mathfrak{R}_A)$-invariant K\"ahler-Berwald metrics respectively defined  by \eqref{FI},
	\eqref{FII} and \eqref{FIII}. Then the holomorphic bisectional curvatures of $(\mathfrak{R}_I(m,n),F_I),(\mathfrak{R}_{II}(p),F_{II})$
	and $(\mathfrak{R}_{III}(q),F_{III})$ satisfy
	\begin{eqnarray}
		-\frac{4}{m+n}&\leq& B_{I}(\mathfrak{R}_I(m,n))\leq 0,\label{hb-I}\\
		-\frac{4}{m+n}&\leq& B_{II}(\mathfrak{R}_{II}(p))\leq 0,\label{hb-II}\\
		-\frac{4(1+t)}{q-1}\cdot \frac{1}{2+t\sqrt[k]{2}}&\leq& B_{III}(\mathfrak{R}_{III}(q))\leq 0\label{hb-III}
	\end{eqnarray}
	and these give the supremum and the infimum of holomorphic bisectional curvatures for $F_{A} (A=I,II,III)$, respectively.
\end{theorem}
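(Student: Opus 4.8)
The plan is to use the $\mbox{Aut}(\mathfrak{R}_A)$-invariance of the holomorphic bisectional curvature to reduce everything to the point $Z=0$, where $\mathcal{B}_{i,j}(0;V,W)=\mbox{tr}\{(V\overline{V'})^i(W\overline{W'})^j\}$ and $\mathfrak{B}_k(0;V)=\mbox{tr}\{(V\overline{V'})^k\}$, and then to estimate the explicit formulas \eqref{hbsc-F-I}, \eqref{hbsc-F-II}, \eqref{hbsc-F-III} from Theorems \ref{C-FI}, \ref{C-FII}, \ref{C-FIII}. Since $V\overline{V'}$ and $W\overline{W'}$ are positive semi-definite Hermitian, every summand in the numerators of those formulas is of the form $\mbox{tr}\{PQ\}$ with $P,Q\geq 0$ Hermitian (for type $I$ also $\mbox{tr}\{(\overline{V'}V)^i(\overline{W'}W)^j\}$, non-negative for the same reason), hence $\geq 0$; this gives at once $B_A\leq 0$. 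That $0$ is the supremum follows, again by homogeneity of $\mathfrak{R}_A$, from exhibiting at the origin vectors $V,W$ with $(V\overline{V'})(W\overline{W'})=0$: take $V=E_{11},\,W=E_{22}$ for types $I,II$ and $V=E_{12}-E_{21},\,W=E_{34}-E_{43}$ for type $III$ (this uses $q\geq 4$); in each case all numerator terms vanish, so $B_A(0;V,W)=0$.

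For the lower bounds the single analytic ingredient is the elementary inequality $\mbox{tr}\{PQ\}\leq \mbox{tr}\{P\}\,\lambda_{\max}(Q)$ valid for positive semi-definite Hermitian $P,Q$. Applying it with $P=(V\overline{V'})^i$, $Q=W\overline{W'}$ (and, for type $I$, also with $P=(\overline{V'}V)^i$, $Q=\overline{W'}W$, using $\mbox{tr}\{(\overline{V'}V)^i\}=\mbox{tr}\{(V\overline{V'})^i\}$ and $\lambda_{\max}(\overline{W'}W)=\lambda_{\max}(W\overline{W'})$) together with the identity $\mathfrak{B}_k^{\frac1k-1}(0;V)\,\mathfrak{B}_k(0;V)=\mathfrak{B}_k^{\frac1k}(0;V)$, one bounds the numerator of each of \eqref{hbsc-F-I}--\eqref{hbsc-F-III} by a constant multiple of $\big[\mathfrak{B}_1(0;V)+t\,\mathfrak{B}_k^{\frac1k}(0;V)\big]\lambda_{\max}(W\overline{W'})$, that is, of $F_A^2(0;V)\,\lambda_{\max}(W\overline{W'})$. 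After the cancellations this yields the uniform estimate
\begin{equation*}
	|B_A(0;V,W)|\leq \frac{4\,\lambda_{\max}(W\overline{W'})}{F_A^2(0;W)}.
\end{equation*}
It then remains only to bound $\lambda_{\max}(W\overline{W'})$ above by a suitable multiple of $F_A^2(0;W)$. From $\mbox{tr}\{(W\overline{W'})^l\}\geq\lambda_{\max}(W\overline{W'})^l$ one gets $F_I^2(0;W)\geq(m+n)\lambda_{\max}(W\overline{W'})$ and $F_{II}^2(0;W)\geq(p+1)\lambda_{\max}(W\overline{W'})$, whence $B_I\geq-\frac{4}{m+n}$ and $B_{II}\geq-\frac{4}{p+1}$. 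For type $III$ the stronger constant arises because $W$ is skew-symmetric, so by the classical canonical form its singular values occur in equal pairs; hence $\lambda_{\max}(W\overline{W'})$ has multiplicity at least $2$, which gives $\mbox{tr}\{(W\overline{W'})^l\}\geq 2\,\lambda_{\max}(W\overline{W'})^l$ and therefore $F_{III}^2(0;W)\geq\frac{q-1}{1+t}\big(2+t\sqrt[k]{2}\big)\lambda_{\max}(W\overline{W'})$, yielding $B_{III}\geq-\frac{4(1+t)}{(q-1)(2+t\sqrt[k]{2})}$.

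Finally, these lower bounds are sharp. Comparing \eqref{hbsc-F-I}--\eqref{hbsc-F-III} with \eqref{hsc-F-I}--\eqref{hsc-F-III} one reads off $B_A(0;V,V)=K_A(0;V)$; hence taking $V=W$ to be the direction realizing the infimum of the holomorphic sectional curvature in Theorem \ref{thm-4.4}, namely $E_{11}$ for types $I,II$ and $E_{12}-E_{21}$ for type $III$, makes $B_A(0;V,W)$ equal to the claimed negative constant. Combined with the first paragraph, this identifies $0$ and those constants as, respectively, the exact supremum and infimum of the holomorphic bisectional curvature of $F_A$ on $\mathfrak{R}_A$ for $A=I,II,III$, and the argument is uniform in $t\in[0,+\infty)$ (the case $t=0$ recovering the statement for the Bergman metrics). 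The only genuinely delicate point is the type $III$ estimate, where one must exploit the pairing of the singular values of a skew-symmetric matrix to recover the sharp constant $2+t\sqrt[k]{2}$ rather than $1+t$; the remainder is bookkeeping with the trace inequality and with the homogeneity degrees built into $\mathfrak{B}_k$ and $\mathcal{B}_{i,j}$.
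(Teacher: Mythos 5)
Your proof is correct and follows essentially the same route as the paper's: reduction to the origin by invariance, the trace estimate $\mbox{tr}\{PQ\}\le\mbox{tr}\{P\}\,\lambda_{\max}(Q)$ for positive semi-definite Hermitian $P,Q$ applied to the explicit formulas \eqref{hbsc-F-I}--\eqref{hbsc-F-III} (the paper phrases this via singular value decompositions and a unitary matrix $P=U_1\overline{U_2'}$, but it is the same inequality with the roles of $V$ and $W$ interchanged), the pairing of singular values of skew-symmetric matrices to obtain the sharper type-$III$ constant, and the same extremal directions $E_{11},E_{22}$ and $E_{12}-E_{21},E_{34}-E_{43}$ together with $B_A(0;V,V)=K_A(0;V)$ for sharpness. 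One remark: the lower bound $-\frac{4}{p+1}$ you derive for $B_{II}$ is the correct one (it agrees with Theorem \ref{th12}(4) and with the paper's own proof of this theorem), so the constant $-\frac{4}{m+n}$ printed in \eqref{hb-II} is a typo in the statement.
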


\begin{proof}	
	
	(1) Since for any $V,W\in T_0^{1,0}\mathfrak{R}_I$, there exist unitary matrices $U_1, U_2\in U(m)$ and $L_1,L_2\in U(n)$ such that
	$$
	U_1V\overline{L_1'}=\begin{pmatrix}
		\lambda_1 &\cdots  & 0 & 0 & \cdots & 0 \\
		\vdots& \ddots & \vdots & \vdots & \vdots & \vdots \\
		0& \cdots & \lambda_m & 0 & \cdots & 0 \\
	\end{pmatrix}_{m\times n},\quad
	U_2W\overline{L_2'}=\begin{pmatrix}
		\mu_1 &\cdots  & 0 & 0 & \cdots & 0 \\
		\vdots& \ddots & \vdots & \vdots & \vdots & \vdots \\
		0& \cdots & \mu_m & 0 & \cdots & 0 \\
	\end{pmatrix}_{m\times n},
	$$
	where $\lambda_i,\mu_i\geq0$ for $i=1,\cdots, m.$	
	Denote $P:=(P_{ij})=U_1\overline{U_2'}$, then $P\in U(m)$ and we have
	\begin{eqnarray*}
		0\leq \mbox{tr}\left\{V\overline{V'}W\overline{W'}\right\}&=&\sum_{i=1}^m\sum_{j=1}^m\overline{P_{ij}}\lambda_i^2 P_{ij}\mu_j^2\leq \max_{1\leq i\leq m}\{\lambda_i^2\}\cdot\left(\sum_{j=1}^m\mu_j^2\right),\\
		0\leq \mbox{tr}\left\{t\mathfrak{B}_k^{\frac{1}{k}-1}(0;V)\left[V\overline{V'}\right]^kW\overline{W'}\right\}&=&t\left(\sum_{i=1}^m\lambda_i^{2k}\right)^{\frac{1}{k}-1}\left(\sum_{i=1}^m\sum_{j=1}^m\overline{P_{ij}}\lambda_i^{2k} P_{ij}\mu_j^2\right)\\
		&\leq& t\left(\sum_{i=1}^m\lambda_i^{2k}\right)^{\frac{1}{k}}\cdot \max_{1\leq j\leq m}\{\mu_j^2\}.
	\end{eqnarray*}
	So that
	\begin{eqnarray}
		G(0;V)G(0;W)
		&=&\left(\frac{m+n}{1+t}\right)^2\left\{\left[\left(\sum_{i=1}^{m}\lambda_i^2\right)
		+t\left(\sum_{i=1}^{m}\lambda_i^{2k}\right)^{\frac{1}{k}}\right]\left(\sum_{i=1}^{m}\mu_i^2\right)\right.\\
		&&\left.+t\left[\sum_{i=1}^{m}\lambda_i^2+t\left(\sum_{i=1}^{m}\lambda_i^{2k}\right)^{\frac{1}{k}}\right]\left(\sum_{i=1}^{m}\mu_i^{2k}\right)^{\frac{1}{k}}\right\}
		\label{BB2}
	\end{eqnarray}
	and
	\begin{eqnarray}
		0&\leq& \mbox{tr}\left\{V\overline{V'}W\overline{W'}+t\mathfrak{B}_{k}^{\frac{1}{k}-1}(0;V)\left(V\overline{V'}\right)^{k}W\overline{W'}\right\}\nonumber\\
		&\leq &\max_{1\leq i\leq m}\{\lambda_i^2\}\cdot \left(\sum_{j=1}^m\mu_j^2\right)
		+t\left(\sum_{i=1}^{m}\lambda_i^{2k}\right)^{\frac{1}{k}}\cdot \max_{1\leq j\leq m}\{\mu_j^2\}.\label{BB3}
	\end{eqnarray}
	Now notice that
	\begin{eqnarray} &&\frac{\max\limits_{1\leq i\leq m}\{\lambda_i^2\}\cdot\left(\sum\limits_{j=1}^{m}\mu_j^2\right)}{\left[\left(\sum\limits_{i=1}^{m}\lambda_i^2\right)+t\left(\sum\limits_{i=1}^{m}\lambda_i^{2k}\right)^{\frac{1}{k}}\right]\left(\sum\limits_{i=1}^{m}\mu_i^2\right)}
		\leq\frac{1}{1+t}\label{BB40}
	\end{eqnarray}
	and
	\begin{eqnarray}
		\frac{\left(\sum\limits_{i=1}^{m}\lambda_i^{2k}\right)^{\frac{1}{k}}\cdot \max\limits_{1\leq j\leq m}\{\mu_j^2\}}
		{\left[\sum\limits_{i=1}^{m}\lambda_i^2 +t\left(\sum\limits_{i=1}^{m}\lambda_i^{2k}\right)^{\frac{1}{k}}\right]\left(\sum\limits_{i=1}^{m}\mu_i^{2k}\right)^{\frac{1}{k}}}	
		\leq\frac{1}{1+t}. \label{BB50}
	\end{eqnarray}
	By \eqref{hbsc-F-I}, it follows that
	$$-\frac{4}{m+n} \leq B_I(\mathfrak{R}_{I}(m,n))\leq 0. $$
	Notice that $ B_{I}(0;E_{11},E_{11})=K_{I}(0;E_{11})=-\frac{4}{m+n} $ and $ B_{I}(0;E_{11},E_{22})=0. $
	Thus $ -\frac{4}{m+n} $  and 0 are infimum and supremum of $ B_{I} $ respectively.
	
	(2)	In this case, it suffices to note that for $V,W\in T_0^{1,0}\mathfrak{R}_{II}(p)$, there exist unitary matrices $U_1,U_2\in U(p)$ such that
	$$
	U_1VU_1'=\begin{pmatrix}
		\lambda_1 &\cdots  & 0  \\
		\vdots& \ddots & \vdots  \\
		0& \cdots & \lambda_p  \\
	\end{pmatrix}_{p\times p},\quad
	U_2WU_2'=\begin{pmatrix}
		\mu_1 &\cdots  & 0  \\
		\vdots& \ddots & \vdots  \\
		0& \cdots & \mu_p  \\
	\end{pmatrix}_{p\times p},
	$$
	where $\lambda_i,\mu_i\geq0$ for $i=1,\cdots, p$. Then follows the steps of (1) we get \eqref{hb-II}.
	Notice that $ B_{II}(0;E_{11},E_{11})=K_{II}(0;E_{11})=-\frac{4}{p+1} $ and $ B_{II}(0;E_{11},E_{22})=0. $
	Thus $ -\frac{4}{p+1} $  and 0 are infimum and supremum of $ B_{II} $ respectively.
	
	(3) We just give the proof for $q=2\nu$, the case for $q=2\nu+1$ is the same. For vectors $V,W\in T_0^{1,0}\mathfrak{R}_{III}$, there exist  unitary matrices $U_1,U_2\in U(q)$ such that
	\begin{eqnarray*}
		U_1VU_1'=\begin{pmatrix}
			0 & \lambda_1 & \cdots & 0 & 0 \\
			-\lambda_1 & 0 & \cdots & 0 & 0 \\
			\vdots & \vdots &\ddots  & \vdots & \vdots \\
			0 & 0 &\cdots  & 0 & \lambda_\nu \\
			0 & 0 & \cdots & -\lambda_\nu & 0 \\
		\end{pmatrix},\quad
		U_2WU_2'=\begin{pmatrix}
			0 & \mu_1 & \cdots & 0 & 0 \\
			-\mu_1 & 0 & \cdots & 0 & 0 \\
			\vdots & \cdots &\ddots  & \vdots & \vdots \\
			0 & 0 &\cdots  & 0 & \mu_\nu \\
			0 & 0 & \cdots & -\mu_\nu & 0 \\
		\end{pmatrix}
	\end{eqnarray*}
	for $\lambda_i,\mu_i\geq0,i=1,\cdots \nu$. In this case
	\begin{eqnarray}
		G(0;V)G(0;W)
		&=&\left(\frac{q-1}{1+t}\right)^2\left\{\left[2\left(\sum_{i=1}^{\nu}\lambda_i^2\right)
		+t\left(2\sum_{i=1}^{\nu}\lambda_i^{2k}\right)^{\frac{1}{k}}\right]\left(2\sum_{i=1}^{\nu}\mu_i^2\right)\right.\\
		&&\left.+t\left[2\sum_{i=1}^{\nu}\lambda_i^2+t\left(2\sum_{i=1}^{\nu}\lambda_i^{2k}\right)^{\frac{1}{k}}\right]\left(2\sum_{i=1}^{\nu}\mu_i^{2k}\right)^{\frac{1}{k}}\right\}
		\label{BB12}
	\end{eqnarray}
	and
	\begin{eqnarray}
		0&\leq& \mbox{tr}\left\{V\overline{V'}W\overline{W'}+t\mathfrak{B}_{k}^{\frac{1}{k}-1}(0;V)\left(V\overline{V'}\right)^{k}W\overline{W'}\right\}\nonumber\\
		&\leq &\max_{1\leq i\leq \nu}\{\lambda_i^2\}\cdot \left(2\sum_{j=1}^\nu\mu_j^2\right)
		+t\left(2\sum_{i=1}^{\nu}\lambda_i^{2k}\right)^{\frac{1}{k}}\cdot \max_{1\leq j\leq \nu}\{\mu_j^2\}.\label{BB3}
	\end{eqnarray}
	Now notice that
	\begin{eqnarray} &&\frac{\max\limits_{1\leq i\leq \nu }\{\lambda_i^2\}\cdot\left(2\sum\limits_{j=1}^{\nu}\mu_j^2\right)}{\left[\left(2\sum\limits_{i=1}^{\nu}\lambda_i^2\right)+t\left(2\sum\limits_{i=1}^{\nu}\lambda_i^{2k}\right)^{\frac{1}{k}}\right]\left(2\sum\limits_{i=1}^{\nu}\mu_i^2\right)}
		\leq\frac{1}{2+t\sqrt[k]{2}}\label{BB40}
	\end{eqnarray}
	and
	\begin{eqnarray}
		\frac{\left(2\sum\limits_{i=1}^{\nu}\lambda_i^{2k}\right)^{\frac{1}{k}}\cdot \max\limits_{1\leq j\leq \nu}\{\mu_j^2\}}
		{\left[2\sum\limits_{i=1}^{\nu}\lambda_i^2 +t\left(2\sum\limits_{i=1}^{\nu}\lambda_i^{2k}\right)^{\frac{1}{k}}\right]\left(2\sum\limits_{i=1}^{\nu}\mu_i^{2k}\right)^{\frac{1}{k}}}	
		\leq\frac{1}{2+t\sqrt[k]{2}}. \label{BB50}
	\end{eqnarray}
	By \eqref{hbsc-F-III}, we obtain \eqref{hb-III}.
	Notice that $ B_{III}(0;E_{12}-E_{21},E_{12}-E_{21})=K_{III}(0;E_{12}-E_{21})=-\frac{4(1+t)}{q-1}\cdot \frac{1}{2+t\sqrt[k]{2}} $ and $ B_{III}(0;E_{12}-E_{21},E_{34}-E_{43})=0. $
	Thus $ -\frac{4(1+t)}{q-1}\cdot \frac{1}{2+t\sqrt[k]{2}}  $  and 0 are infimum and supremum of $ B_{III} $ respectively.
	
\end{proof}

\subsection{$\mbox{Aut}(\mathfrak{R}_{IV})$-invariant K\"ahler-Berwald metrics on $\mathfrak{R}_{IV}$}\label{subsection-3.6}

In this subsection, we shall construct $\mbox{Aut}(\mathfrak{R}_{IV})$-invariant strongly pseudoconvex complex Finsler metrics on $\mathfrak{R}_{IV}$. First let's recall the holomorphic automorphism of $\mathfrak{R}_{IV}$.

For any fixed point $z_0\in\mathfrak{R}_{IV}$, there exists a $\phi_{z_0}\in\mbox{Aut}(\mathfrak{R}_{IV})$ such that $\phi_{z_0}(z_0)=0$. Indeed, (cf. \cite{Hua}, \cite{Look3})
\begin{equation}
	w=\phi_{z_0}(z)=\left\{\left[\left(\frac{1+zz'}{2},\frac{1-zz'}{2i}\right)-zX_0'\right]A\begin{pmatrix}
		1 \\
		i \\
	\end{pmatrix}
	\right\}^{-1}\left[z-\left(\frac{1+zz'}{2},\frac{1-zz'}{2i}\right)X_0\right]D,
	\label{p-IV}
\end{equation}
where
\begin{equation}
	X_0=2\begin{pmatrix}
		z_0z_0'+1 & i(z_0z_0'-1) \\
		\overline{z_0z_0'}+1 & -i(\overline{z_0z_0'}-1) \\
	\end{pmatrix}
	\begin{pmatrix}
		z_0 \\
		\overline{z_0} \\
	\end{pmatrix}
	=\frac{-1}{1-|z_0z_0'|^2}\begin{pmatrix}
		(\overline{z_0z_0'}-1)z_0+(z_0z_0'-1)\overline{z_0}  \\
		i(z_0z_0'+1)\overline{z_0}-i(\overline{z_0z_0'}+1)z_0 \\
	\end{pmatrix}
	\label{p-IV-a}
\end{equation}
is a real $2$-by-$N$ matrix satisfying $I-X_0X_0'>0$, and
$A\in\mathscr{M}(2)$, $D\in\mathscr{M}(N)$ are real matrices satisfying
\begin{equation}
	AA'=(I-X_0X_0')^{-1},\quad DD'=(I-X_0'X_0)^{-1},\quad \det A>0.\label{p-IV-b}
\end{equation}

Conversely, every element in $\mbox{Aut}(\mathfrak{R}_{IV})$ can be expressed into the form \eqref{p-IV}  (cf. \cite{Hua}, \cite{Look3}).

By Theorem \ref{hfm}, in order to construct an $\mbox{Aut}(\mathfrak{R}_{IV})$-invariant strongly pseudoconvex complex Finsler metric on $\mathfrak{R}_{IV}$,
it suffices to construct an $\mbox{Iso}(\mathfrak{R}_{IV})$-invariant complex Minkowski norm on $T_0^{1,0}\mathfrak{R}_{IV}$.

Note that by \eqref{p-IV}-\eqref{p-IV-b}, it follows that the isotropy subgroup $\mbox{Iso}(\mathfrak{R}_{IV})$ at the origin $0\in\mathfrak{R}_{IV}$ consists of mappings of the following form:
\begin{equation}
	w=\phi_0(z)=e^{i\theta}zD,\quad \forall \theta\in \mathbb{R}, z\in\mathfrak{R}_{IV}, D\in O(N;\mathbb{R}).
\end{equation}

Thus it suffices to construct a  complex Minkowski norm which is both rotational invariant  and orthogonal invariant.

Let $\phi:[0,1]\rightarrow (0,+\infty)$ be a smooth function. Let's consider the following function
\begin{equation}
	f_{IV}(\xi)=\sqrt{\pmb{r}\phi(\pmb{s})},\quad\pmb{r}:=\xi\overline{\xi}',\quad \pmb{s}:=\frac{|\xi\xi'|^2}{\pmb{r}^2}\in[0,1],\quad 0\neq \xi=(\xi_1,\cdots,\xi_N)\in\mathbb{C}^N.\label{cmn}
\end{equation}
Since $\pmb{s}$ is zero homogeneous with respect to
$\xi\in\mathbb{C}^N\setminus\{0\}$, $\pmb{s}$ actually defines a smooth function on $\mathbb{CP}^{N-1}$, hence $\phi$ is bounded, namely there exist positive constants $C_1,C_2$ such that $C_1\leq \phi(\pmb{s})\leq C_2$.
It is clear that $f_{IV}:\mathbb{C}^N\rightarrow [0,+\infty)$ is a complex norm on $\mathbb{C}^N$.
\begin{proposition}
	Let $f_{IV}$ be the complex norm on $\mathbb{C}^N\cong T_0^{1,0}\mathfrak{R}_{IV}$ defined by \eqref{cmn}. Then $f_{IV}$ is
	$\mbox{Iso}(\mathfrak{R}_{IV})$-invariant.
\end{proposition}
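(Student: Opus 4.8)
The plan is to use the explicit description of $\mbox{Iso}(\mathfrak{R}_{IV})$ recorded just above the statement: every $\phi_0\in\mbox{Iso}(\mathfrak{R}_{IV})$ has the form $\phi_0(z)=e^{i\theta}zD$ with $\theta\in\mathbb{R}$ and $D\in O(N;\mathbb{R})$. Since $\phi_0$ is already complex linear in $z$, its differential at the origin is $(\phi_0)_\ast(\xi)=e^{i\theta}\xi D$ for $\xi\in T_0^{1,0}\mathfrak{R}_{IV}\cong\mathbb{C}^N$. Thus, in view of the notion of $\mbox{Iso}$-invariance used in Theorem \ref{hfm}, it suffices to prove that
\begin{equation*}
f_{IV}(e^{i\theta}\xi D)=f_{IV}(\xi),\qquad\forall\,\theta\in\mathbb{R},\ D\in O(N;\mathbb{R}),\ 0\neq\xi\in\mathbb{C}^N.
\end{equation*}

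First I would set $\eta:=e^{i\theta}\xi D$ and compute the two scalars $\pmb{r}$ and $\pmb{s}$ appearing in \eqref{cmn} for $\eta$. Using that $D$ is a real orthogonal matrix, so that $\overline{D}=D$ and $DD'=D'D=I_N$, one gets
\begin{equation*}
\eta\overline{\eta}'=e^{i\theta}\xi D\,\overline{D}'\,e^{-i\theta}\overline{\xi}'=\xi DD'\overline{\xi}'=\xi\overline{\xi}'=\pmb{r},
\end{equation*}
and similarly $\eta\eta'=e^{2i\theta}\xi DD'\xi'=e^{2i\theta}\xi\xi'$, hence $|\eta\eta'|^2=|\xi\xi'|^2$. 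Consequently the quantity $\pmb{s}=|\xi\xi'|^2/\pmb{r}^2$ (which is well defined because $\pmb{r}=\xi\overline{\xi}'>0$ for $\xi\neq 0$) is also unchanged, and therefore $f_{IV}(\eta)=\sqrt{\pmb{r}\,\phi(\pmb{s})}=f_{IV}(\xi)$, which is exactly the claim.

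There is essentially no hard step here: the whole content is that the two building blocks of $f_{IV}$ are, respectively, invariant under the full unitary group and invariant under the real orthogonal group. The Hermitian form $\xi\overline{\xi}'$ is invariant under all of $U(N)\supset e^{i\theta}O(N;\mathbb{R})$, while the bilinear form $\xi\xi'$ is invariant up to the unimodular factor $e^{2i\theta}$, which disappears after taking $|\cdot|^2$ and forming the scale-invariant ratio $\pmb{s}$; the combination $\sqrt{\pmb{r}\,\phi(\pmb{s})}$ is designed so that both symmetries survive. The only point requiring a word of care is that $\phi$ is evaluated at the genuinely preserved, zero-homogeneous quantity $\pmb{s}\in[0,1]$, so the argument of $\phi$ is unchanged; the smoothness and positivity of $\phi$ play no role in this particular statement and are needed only later.
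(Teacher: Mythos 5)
Your proposal is correct and takes essentially the same route as the paper, which simply observes that $\pmb{r}$ and $\pmb{s}$ are both rotational and orthogonal invariant; you have merely written out the one-line computation $\eta\overline{\eta}'=\xi\overline{\xi}'$ and $\eta\eta'=e^{2i\theta}\xi\xi'$ for $\eta=e^{i\theta}\xi D$ that justifies this. No issues.
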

\begin{proof}
	It suffices to notice that $\pmb{r}$ and $\pmb{s}$ are both rotational invariant and orthogonal invariant.
\end{proof}

In the following we shall derive the necessary and sufficient conditions for $\phi$ such that $f_{IV}(\xi)$ is a complex Minkowski norm. For this purpose, we denote $g:=f_{IV}^2$, and for functions of $\xi\in\mathbb{C}^N$ we use lower subscripts to denote
the derivatives with respect to $\xi_i$ or $\overline{\xi_j}$, for example
$g_i=\frac{\partial g}{\partial \xi_i},g_{i\overline{j}}=\frac{\partial^2g}{\partial \xi_i\overline{\xi_j}},\pmb{s}_i=\frac{\partial \pmb{s}}{\partial \xi_i},\pmb{s}_{i\bar{j}}=\frac{\partial^2 \pmb{s}}{\partial \xi_i\partial\overline{\xi_j}}$, and so on.
\begin{proposition}\label{prop}
	Let $f_{IV}(\xi)=\sqrt{\pmb{r}\phi(\pmb{s})}$ be the complex norm defined by \eqref{cmn}. Then $f_{IV}(\xi)$ is strongly pseudoconvex if and only if
	\begin{equation}
		\phi-2\pmb{s}\phi'>0\quad \mbox{and}\quad \phi[\phi+2(2-3\pmb{s})\phi']+4\pmb{s}(1-\pmb{s})[\phi \phi''-(\phi')^2]>0,\label{sn}
	\end{equation}
	where $\phi',\phi''$ denote the derivatives with respect to $\pmb{s}$.
\end{proposition}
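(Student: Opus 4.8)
The plan is to combine the homogeneity of $g:=f_{IV}^2$ with the $\mbox{Iso}(\mathfrak{R}_{IV})$-invariance of $f_{IV}$ to reduce strong pseudoconvexity to a one-parameter family of scalar inequalities, from which \eqref{sn} will be read off. Since $g(\lambda\xi)=|\lambda|^2g(\xi)$ and $g(e^{i\theta}\xi D)=g(\xi)$ for all $\theta\in\mathbb{R}$ and $D\in O(N;\mathbb{R})$, and since a nonsingular $\mathbb{C}$-linear substitution $\xi\mapsto\xi J$ transforms the complex Hessian $(g_{i\overline{j}})$ by a congruence $\overline{J}^{\,t}(g_{i\overline{j}})J$, the positive definiteness of $(g_{i\overline{j}})$ at a point $0\neq\xi$ is unchanged under positive rescaling of $\xi$ and under the action of $\mbox{Iso}(\mathfrak{R}_{IV})$. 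First I would recall the normal form for this action: writing $\xi=x+iy$ with $x,y\in\mathbb{R}^N$, a phase rotation makes $\xi\xi'\geq 0$, which forces $x\perp y$ and $\|x\|\geq\|y\|$; a suitable $D\in O(N;\mathbb{R})$ then brings $\xi$ to $\xi_0=ae_1+ibe_2$ with $a\geq b\geq 0$, and after rescaling we may assume $a^2+b^2=1$. At $\xi_0$ one has $\pmb{r}=1$ and $\pmb{s}=(a^2-b^2)^2$, and as $\xi$ ranges over $\mathbb{C}^N\setminus\{0\}$ the value $\pmb{s}$ sweeps all of $[0,1]$.

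Next I would compute $(g_{i\overline{j}})$ at $\xi_0$ by applying the chain rule to $g=\pmb{r}\phi(\pmb{s})$, using $\pmb{r}_i=\overline{\xi_i}$, $\partial(\xi\xi')/\partial\xi_i=2\xi_i$, and keeping $\phi,\phi',\phi''$ symbolic; it is convenient to set $c:=a^2-b^2=\sqrt{\pmb{s}}$, $a^2=(1+c)/2$, $b^2=(1-c)/2$. The outcome is that $(g_{i\overline{j}})(\xi_0)$ is block diagonal for the splitting $\mathbb{C}^N=\mathbb{C}e_1\oplus\mathbb{C}e_2\oplus(\mathbb{C}e_3\oplus\cdots\oplus\mathbb{C}e_N)$: on the last summand it equals the scalar matrix $(\phi-2\pmb{s}\phi')\,I_{N-2}$, and all entries coupling $\{1,2\}$ to $\{3,\dots,N\}$ vanish (each such entry is a polynomial in the components of $\xi_0$ that are zero). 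Hence $(g_{i\overline{j}})(\xi_0)>0$ if and only if $\phi-2\pmb{s}\phi'>0$ and the $2\times 2$ Hermitian block
$$
M:=\begin{pmatrix} g_{1\overline{1}} & g_{1\overline{2}}\\ g_{2\overline{1}} & g_{2\overline{2}}\end{pmatrix}
$$
is positive definite; requiring $\phi-2\pmb{s}\phi'>0$ for all $\pmb{s}\in[0,1]$ is exactly the first inequality in \eqref{sn}.

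It then remains to analyse $M$. I would write out $g_{1\overline{1}},g_{2\overline{2}}$ and $|g_{1\overline{2}}|^2$ explicitly as polynomials in $c$ with coefficients linear in $\phi,\phi',\phi''$; the key algebraic fact is that in $\det M=g_{1\overline{1}}g_{2\overline{2}}-|g_{1\overline{2}}|^2$ the $(\phi'')^2$ and $\phi'\phi''$ contributions cancel identically, leaving
$$
\det M=\phi\bigl[\phi+2(2-3\pmb{s})\phi'\bigr]+4\pmb{s}(1-\pmb{s})\bigl[\phi\phi''-(\phi')^2\bigr].
$$
Moreover, Euler's identity for the $(1,1)$-homogeneous function $g$ gives $\sum_{i,j}g_{i\overline{j}}(\xi_0)(\xi_0)_i\overline{(\xi_0)_j}=g(\xi_0)=\phi>0$, and since $\xi_0$ has no components outside $\{1,2\}$ this says precisely that $M$ is positive on the vector $(a,ib)$. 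A Hermitian $2\times2$ matrix that is positive on some nonzero vector is positive definite if and only if its determinant is positive; therefore $M>0\Longleftrightarrow\det M>0$, and imposing this for all $\pmb{s}\in[0,1]$ is exactly the second inequality in \eqref{sn}. Combining the two reductions with the normal-form argument establishes both directions of the equivalence.

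The main obstacle is the explicit computation of $(g_{i\overline{j}})(\xi_0)$ and, in particular, verifying the cancellation that produces the clean formula for $\det M$; organising the computation through the substitutions $c,a^2,b^2$ above keeps it manageable. A secondary point requiring care is the normal-form reduction itself, which rests on the Takagi-type diagonalisation of $\xi\xi'$ under the phase-plus-real-orthogonal action, together with the degenerate cases $\pmb{s}=0$ (where $a=b$ and $\xi\xi'=0$) and $\pmb{s}=1$ (where $b=0$ and $e_2$ joins the orthogonal block); in both cases one should check directly that the two inequalities in \eqref{sn} are still the precise conditions, which they are — for instance at $\pmb{s}=1$ the second inequality reduces to $\phi(\phi-2\phi')>0$, consistent with the first.
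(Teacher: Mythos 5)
Your argument is correct, and it reaches the same two scalar invariants as the paper ($c_0=\phi-2\pmb{s}\phi'$ and $\tilde k=\phi[\phi+2(2-3\pmb{s})\phi']+4\pmb{s}(1-\pmb{s})[\phi\phi''-(\phi')^2]$) by a genuinely different mechanism. The paper works at a general $\xi\neq 0$, writes the complex Hessian as a rank-two perturbation $H=c_0I_N+BXB^\ast$ with $B=(\xi',\overline{\xi}')$ and a $2\times2$ matrix $X$, and invokes the determinant identity $\det\{(\lambda-c_0)I_N-BXB^\ast\}=(\lambda-c_0)^{N-2}\det\{(\lambda-c_0)I_2-B^\ast BX\}$ to isolate the two nontrivial eigenvalues as roots of a quadratic; it then checks that the discriminant of that quadratic is nonnegative (so the roots are real) and uses the identity $2\phi\left[\phi+(2-3\pmb{s})\phi'+2\pmb{s}(1-\pmb{s})\phi''\right]=\tilde k+\phi^2+4\pmb{s}(1-\pmb{s})(\phi')^2$ to show that positivity of the product $\tilde k$ already forces positivity of the sum. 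You instead exploit the $\mbox{Iso}(\mathfrak{R}_{IV})$-action to normalize $\xi$ to $ae_1+ibe_2$, where the Hessian is visibly block diagonal with an $(N-2)$-fold eigenvalue $c_0$ and an explicit $2\times2$ block $M$ (your trace and determinant of $M$ do agree with the paper's quadratic; I checked the cancellation, and in fact no $(\phi'')^2$ terms ever appear, while the $\phi'\phi''$ terms cancel as you claim), and you replace the discriminant-plus-identity step by the observation that Euler's identity makes $M$ positive on the radial vector $(a,ib)$, so that for a $2\times2$ Hermitian matrix positivity is equivalent to $\det M>0$. What your route buys is a conceptually cleaner final step and an explicit diagonalization that also flags the boundary cases $\pmb{s}=0,1$; what the paper's route buys is that it avoids the normal-form reduction (and hence any appeal to the Takagi-type diagonalization of $\xi\xi'$) and works uniformly at every $\xi$. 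Both are complete and correct; the only caveat for a final write-up is that you should actually carry out the $\det M$ computation you describe as the "main obstacle," since the clean formula for $\det M$ is the crux of the equivalence.
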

\begin{remark}\label{remark}
	If we write $\phi(\pmb{s})=e^{\psi(\pmb{s})}$ for a real-valued smooth function $\psi(\pmb{s})$ defined on $[0,1]$, it is easy to check that \eqref{sn} is equivalent to the following inequalities
	\begin{equation}
		1-2\pmb{s}\psi'(\pmb{s})>0,\quad 1+2(2-3\pmb{s})\psi'(\pmb{s})+4\pmb{s}(1-\pmb{s})\psi''(\pmb{s})>0.\label{ssn}
	\end{equation}
\end{remark}	
\begin{proof}
	Note that
	\begin{eqnarray}
		\pmb{s}_i&=&\frac{2}{\pmb{r}^2}\{\overline{\xi\xi'}\xi_i-\pmb{rs}\overline{\xi_i}\},\label{s-i}\\
		\pmb{s}_{i\overline{j}}&=&\frac{2}{\pmb{r}^2}\{2\xi_i\overline{\xi_j}-\pmb{s}\overline{\xi_i}\xi_j-\pmb{rs}\delta_{ij}-\pmb{r}\overline{\xi_i}\pmb{s}_{\overline{j}}-\pmb{r}\pmb{s}_i\xi_j\}.\label{s-ij}
	\end{eqnarray}
	It follows that
	\begin{eqnarray}
		g_{i\overline{j}}&=&c_0\delta_{ij}-\phi'\overline{\xi_i}\pmb{s}_{\overline{j}}-\phi'\pmb{s}_i\xi_j+\pmb{r}\phi''\pmb{s}_i\pmb{s}_{\overline{j}}-\frac{2\pmb{s}}{\pmb{r}}\phi'\overline{\xi_i}\xi_j+\frac{4}{\pmb{r}}\phi'\xi_i\overline{\xi_j},\label{g-ija}
	\end{eqnarray}
	where we denote $c_0:=\phi-2\pmb{s}\phi'$. Substituting  \eqref{s-i}  into \eqref{g-ija} we obtain
	\begin{eqnarray}
		g_{i\overline{j}}
		&=&c_0\delta_{ij}+\frac{1}{\pmb{r}}c_1\xi_i\overline{\xi_j}
		+\frac{1}{\pmb{r}^2}c_2\xi\xi'\overline{\xi_i\xi_j}+\frac{1}{\pmb{r}^2}c_2\overline{\xi\xi'}\xi_i\xi_j-\frac{\pmb{s}}{\pmb{r}}c_2\overline{\xi_i}\xi_j\label{g-ijb}
	\end{eqnarray}
	where
	\begin{eqnarray}
		c_1=4\left(\phi'+\pmb{s}\phi''\right),\quad c_2=-2\left(\phi'+2\pmb{s}\phi''\right).
	\end{eqnarray}
	The strongly pseudoconvexity of $f_{IV}$ is equivalent to the positive definite of the Hermitian matrix $(g_{i\bar{j}})$ for any
	$\xi\in\mathbb{C}^N\setminus\{0\}$, while the later is equivalent to the fact that $(g_{i\bar{j}})$ has $n$ positive eigenvalues for any $\xi\in\mathbb{C}^N\setminus\{0\}$.
	
	For this purpose, we denote $H=(g_{i\bar{j}})$ and set
	\begin{equation}
		B=\begin{bmatrix}
			\xi_1 & \overline{\xi_1} \\
			\vdots & \vdots \\
			\xi_N & \overline{\xi_N} \\
		\end{bmatrix}
		,\quad
		X=\begin{bmatrix}
			\frac{1}{\pmb{r}}c_1&\frac{1}{\pmb{r}^2}c_2\overline{\xi\xi'}   \\
			\frac{1}{\pmb{r}^2}c_2\xi\xi'  & -\frac{\pmb{s}}{\pmb{r}}c_2\\
		\end{bmatrix}.\label{B}
	\end{equation}
	Note that $B^\ast$ is just the Hermitian transpose of $B$, namely $B^\ast=\overline{B'}$.
	Thus
	$$g_{i\bar{j}}=c_0\delta_{i\bar{j}}+\begin{bmatrix}
		\xi_i & \overline{\xi_i} \\
	\end{bmatrix}X\begin{bmatrix}
		\overline{\xi_j} \\
		\xi_j \\
	\end{bmatrix}
	$$
	or equivalently
	\begin{equation}
		H=c_0I_N+BXB^\ast.\label{H}
	\end{equation}
	
	By \eqref{H}, we have
	\begin{eqnarray}
		\det\{\lambda I_N-H\}&=&\det\{(\lambda-c_0)I_N-BXB^\ast\}\nonumber\\
		&=&(\lambda-c_0)^{N-2}\det\{(\lambda-c_0)I_2-B^\ast BX\},\label{BX}
	\end{eqnarray}
	where in the last equality we used the Lemma 4.1 in \cite{XZ1}. By \eqref{B}, we have
	$$
	B^\ast B=\begin{pmatrix}
		\pmb{r} &\overline{\xi\xi'} \\
		\xi\xi' &\pmb{r}\\
	\end{pmatrix},
	$$
	so that
	\begin{equation}
		B^\ast BX
		=\begin{pmatrix}
			c_1+\pmb{s}c_2&\frac{1}{\pmb{r}}(1-\pmb{s})c_2\overline{\xi\xi'}\\
			\frac{1}{\pmb{r}}(c_1+c_2)\xi\xi'&0\\
		\end{pmatrix}.
		\label{BBX}
	\end{equation}
	Substituting \eqref{BBX} into \eqref{BX}, we have
	\begin{eqnarray*}
		\det\{\lambda I_N-H\}&=&(\lambda-c_0)^{N-2}\det
		\begin{bmatrix}
			(\lambda-c_0)-(c_1+\pmb{s}c_2)&-\frac{1}{\pmb{r}}(1-\pmb{s})c_2\overline{\xi\xi'}\\
			-\frac{1}{\pmb{r}}(c_1+c_2)\xi\xi'&(\lambda-c_0)
		\end{bmatrix}.
	\end{eqnarray*}
	By a direct calculation and rearrangement of terms, we obtain
	\begin{eqnarray}
		c_0=\phi-2\pmb{s}\phi',\quad c_1=4\left(\phi'+\pmb{s}\phi''\right),\quad c_2=-2\left(\phi'+2\pmb{s}\phi''\right).
	\end{eqnarray}
	\begin{eqnarray}
		\det\{\lambda I_N-H\}
		=(\lambda-c_0)^{N-2}\Bigg\{
		\lambda^2-2\Big[\phi+(2-3\pmb{s})\phi'+2\pmb{s}(1-\pmb{s})\phi''\Big]\lambda+\tilde{k}\Bigg\},\label{ae}
	\end{eqnarray}
	where
	\begin{equation}
		\tilde{k}=\phi[\phi+2(2-3\pmb{s})\phi']+4\pmb{s}(1-\pmb{s})[\phi \phi''-(\phi')^2].\label{tk}
	\end{equation}
	
	Note that the following equation
	\begin{equation}
		\lambda^2-2\Big[\phi+(2-3\pmb{s})\phi'+2\pmb{s}(1-\pmb{s})\phi''\Big]\lambda+\tilde{k}=0\label{lambda-2}
	\end{equation}
	always has two real roots (counting multiplicities), since
	\begin{eqnarray*}
		\triangle&=&4\Big[\phi+(2-3\pmb{s})\phi'+2\pmb{s}(1-\pmb{s})\phi''\Big]^2-4\tilde{k}\\
		&=&4\Big\{[(2-3\pmb{s})\phi'+2\pmb{s}(1-\pmb{s})\phi'']^2+4\pmb{s}(1-\pmb{s})(\phi')^2\Big\}\geq 0.
	\end{eqnarray*}
	Denote $\lambda_{N-1}$ and $\lambda_N$ the real roots of \eqref{lambda-2}. If $\lambda_{N-1}$ and $\lambda_N$ are positive roots, it is necessary that
	$$\phi+(2-3\pmb{s})\phi'+2\pmb{s}(1-\pmb{s})\phi''>0\quad\mbox{and}\quad\tilde{k}>0.$$
	
	Thus $f_{IV}$ is a complex Minkowski norm if and only if
	$$
	\lambda_1=\cdots=\lambda_{N-2}=c_0>0,\quad\lambda_{N-1}>0,\quad\lambda_N>0.
	$$
	Notice that we have
	$$
	2\phi[\phi+(2-3\pmb{s})\phi'+2\pmb{s}(1-\pmb{s})\phi'']=\tilde{k}+\phi^2+4\pmb{s}(1-\pmb{s})(\phi')^2,
	$$
	which implies that if $\tilde{k}>0$, then it necessary
	$$
	\phi+(2-3\pmb{s})\phi'+2\pmb{s}(1-\pmb{s})\phi''>0.
	$$
	This completes the proof.
\end{proof}

\begin{example}
	Let $f_{IV}(\xi)=\sqrt{\pmb{r}\phi(\pmb{s})}$ with
	$$\phi(\pmb{s})=1+(1+\pmb{s})^{\frac{1}{2}}.$$
	Then $f_{IV}(\xi)$ is a complex Minkowski norm.
\end{example}

Now let $z_0\in\mathfrak{R}_{IV}$ be any fixed point, define
\begin{equation}
	F_{IV}(z_0;v):=f_{IV}(\sqrt{2N}(\phi_{z_0})_\ast(v)),\quad \forall v\in T_{z_0}^{1,0}\mathfrak{R}_{IV},\label{F-IV}
\end{equation}
where $(\phi_{z_0})_\ast$ denotes the differential of $\phi_{z_0}$ at the point $z_0$.

\begin{example} Let $z_0$ be any fixed point in $\mathfrak{R}_{IV}$ and $\phi_{z_0}\in \mbox{Aut}(\mathfrak{R}_{IV})$ is given by \eqref{p-IV} satisfying
	$\phi_{z_0}(z_0)=0$. If we take $\widetilde{\pmb{r}}=2N\cdot\xi\overline{\xi'}$ with $\xi:=(\phi_{z_0})_\ast(v)$ for any
	$v\in T_{z_0}^{1,0}\mathfrak{R}_{IV}$. Then it is easy to check that
	\begin{equation}
		\widetilde{\pmb{r}}(z_0;v)=\frac{2N}{\Delta^2(z_0)}v\left[\triangle(z_0)I_N-2(\overline{z_0z_0'})z_0'z_0-2(1-2z_0\overline{z_0'})z_0'\overline{z_0}
		+2\overline{z_0'}z_0-2(z_0z_0')\overline{z_0'z_0}
		\right]\overline{v'},\label{tr}
	\end{equation}
	is just the Bergman metric on $\mathfrak{R}_{IV}$, here we denote $\triangle(z_0):=1+|z_0z_0'|^2-2z_0\overline{z_0'}$.
\end{example}

Indeed, the Jacobian matrix of $w=\phi_{z_0}(z)$ at $z=z_0$ is given by
$$
\left(\frac{\partial w_j}{\partial z_i}\right)_{z=z_0}
=\frac{1}{\sqrt{\triangle(z_0)}}\Big\{I_N-\frac{2}{1-|z_0z_0'|^2}z_0'\overline{z_0}(I_N-\overline{z_0'}z_0)\Big\}D',
$$
where $D$ satisfies \eqref{p-IV-b}. Thus for any $v\in T_{z_0}^{1,0}\mathfrak{R}_{IV}$,
we have
$$\xi=(\phi_{z_0})_\ast(v)=\frac{1}{\sqrt{\triangle(z_0)}}v\Big\{I_N-\frac{2}{1-|z_0z_0'|^2}z_0'\overline{z_0}(I_N-\overline{z_0'}z_0)\Big\}D'\in T_0^{1,0}\mathfrak{R}_{IV}.$$
Denote
\begin{equation}
	\mathfrak{U}=I_N-\frac{2}{1-|z_0z_0'|^2}(z_0'\overline{z_0}-\overline{z_0z_0'}z_0'z_0),\label{U-a}
\end{equation}
Using \eqref{p-IV-b}, it is easy to check that
\begin{equation}
	D'D=I_N+\frac{2}{\Delta(z_0)}(z_0'\overline{z_0}+\overline{z_0'}z_0).\label{U-b}
\end{equation}
Substituting \eqref{U-a} and \eqref{U-b} into $\widetilde{\pmb{r}}:=2N\cdot \xi\overline{\xi'}=\frac{2N}{\Delta(z_0)}v\mathfrak{U}DD'\mathfrak{U}^\ast\overline{v'}$, one gets
\eqref{tr}.

Note that we have
\begin{equation}
	\widetilde{\pmb{s}}(z_0;v):=\frac{|\xi\xi'|^2}{(\xi\overline{\xi'})^2}=\frac{|v\mathfrak{U}D'D\mathfrak{U}'v'|^2}{\Delta^2(z_0)}\frac{1}{(\xi\overline{\xi'})^2}
	=\frac{4N^2}{\triangle^2(z_0)}\frac{|vv'|^2}{\widetilde{\pmb{r}}^2}.\label{ts}
\end{equation}
Since $z_0$ is any fixed point in $\mathfrak{R}_{IV}$, \eqref{tr} and \eqref{ts} hold for any $z\in \mathfrak{R}_{IV}$ and $v\in T_{z}^{1,0}\mathfrak{R}_{IV}$.

\begin{definition}Let $\phi:[0,1]\rightarrow (0,+\infty)$ be a smooth function satisfying \eqref{sn}. Define
	\begin{equation}
		F_{IV}(z;v)=\sqrt{\widetilde{\pmb{r}}\phi(\widetilde{\pmb{s}})},\quad \forall v\in T_z^{1,0}\mathfrak{R}_{IV},\label{FIV}
	\end{equation}
	where $\widetilde{\pmb{r}}$ and $\widetilde{\pmb{s}}$ are given by \eqref{tr} and \eqref{ts} respectively.
\end{definition}
\begin{remark}
	It is easy to check that any $\mbox{Aut}(\mathfrak{R}_{IV})$-invariant complex Finsler metric can be written in the form \eqref{FIV} for a suitable function $\phi(\widetilde{\pmb{s}})$. In particular the Bergman metric \eqref{tr} and the Kobayashi metric \eqref{s11}  on $\mathfrak{R}_{IV}$ are obtained by taking $\phi(\widetilde{\pmb{s}})\equiv 1$ and $\phi(\widetilde{\pmb{s}})=1+\sqrt{1-\widetilde{\pmb{s}}}$, respectively.
\end{remark}
\begin{theorem}
	$F_{IV}$ is a complete $\mbox{Aut}(\mathfrak{R}_{IV})$-invariant K\"ahler-Berwald metric.
\end{theorem}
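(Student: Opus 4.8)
The plan is to obtain all four assertions from the general machinery of Section~\ref{section-2} applied to the homogeneous domain $\mathfrak{R}_{IV}(N)$. First I would record that, by Proposition~\ref{prop} together with condition \eqref{sn}, the function $f_{IV}(\xi)=\sqrt{\pmb{r}\phi(\pmb{s})}$ is a complex Minkowski norm on $\mathbb{C}^N\cong T_0^{1,0}\mathfrak{R}_{IV}$, and that it is $\mbox{Iso}(\mathfrak{R}_{IV})$-invariant since both $\pmb{r}$ and $\pmb{s}$ are invariant under $\xi\mapsto e^{i\theta}\xi D$ with $D\in O(N;\mathbb{R})$. The constant rescaling $\xi\mapsto\sqrt{2N}\xi$ affects neither property, because $\pmb{s}$ is $0$-homogeneous and hence $f_{IV}(\sqrt{2N}\,\xi)^2=2N\,f_{IV}(\xi)^2$; thus $\xi\mapsto f_{IV}(\sqrt{2N}\,\xi)$ is again an $\mbox{Iso}(\mathfrak{R}_{IV})$-invariant complex Minkowski norm. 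Theorem~\ref{hfm} then applies directly and shows that $F_{IV}$ defined by \eqref{F-IV}, equivalently by \eqref{FIV} after substituting \eqref{tr} and \eqref{ts}, is an $\mbox{Aut}(\mathfrak{R}_{IV})$-invariant strongly pseudoconvex complex Finsler metric.

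It remains to show that $F_{IV}$ is K\"ahler-Berwald and complete. For the first I would invoke Theorem~\ref{th-2.1}: it suffices to verify $\frac{\partial^2 F_{IV}^2}{\partial z_i\partial\overline{v_j}}\big|_{(0;v)}=0$ for every nonzero $v\in T_0^{1,0}\mathfrak{R}_{IV}$ and all $i,j$. Writing $G:=F_{IV}^2=\widetilde{\pmb{r}}\,\phi(\widetilde{\pmb{s}})$ and differentiating by the product and chain rules, every term of $\frac{\partial^2 G}{\partial z_i\partial\overline{v_j}}$ carries one of the factors $\frac{\partial\widetilde{\pmb{r}}}{\partial z_i}$, $\frac{\partial^2\widetilde{\pmb{r}}}{\partial z_i\partial\overline{v_j}}$, $\frac{\partial\widetilde{\pmb{s}}}{\partial z_i}$, $\frac{\partial^2\widetilde{\pmb{s}}}{\partial z_i\partial\overline{v_j}}$ (note that $\phi$ and its derivatives depend on $z$ only through $\widetilde{\pmb{s}}$, so no further $z$-derivatives are produced). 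I would then show all four of these vanish at $(0;v)$. By \eqref{tr}, $\widetilde{\pmb{r}}(z;v)=\sum_{k,l}g_{k\bar l}(z)v_k\overline{v_l}$ is the Bergman metric of $\mathfrak{R}_{IV}$, whose metric tensor is in normal form at the center $0$, i.e. $\frac{\partial g_{k\bar l}}{\partial z_i}(0)=0$ (cf.~\cite{Look3}); equivalently this can be read off directly from \eqref{tr}, since at $z=0$ the bracketed matrix reduces to $I_N$ and each of its summands is a monomial of degree $\geq 2$ in $z$ or carries the factor $\triangle(z)$ with $\frac{\partial\triangle}{\partial z_i}(0)=0$, where $\triangle(z)=1+|zz'|^2-2z\overline{z'}$. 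This gives $\frac{\partial\widetilde{\pmb{r}}}{\partial z_i}\big|_{(0;v)}=\frac{\partial^2\widetilde{\pmb{r}}}{\partial z_i\partial\overline{v_j}}\big|_{(0;v)}=0$. By \eqref{ts}, $\widetilde{\pmb{s}}=4N^2\,\triangle(z)^{-2}|vv'|^2\,\widetilde{\pmb{r}}^{-2}$, and since $|vv'|^2$ is independent of $z$ while $\triangle(z)$ and $\widetilde{\pmb{r}}$ have vanishing $z$-derivatives at $(0;v)$, the same vanishing propagates to $\widetilde{\pmb{s}}$. Hence $\frac{\partial^2 G}{\partial z_i\partial\overline{v_j}}\big|_{(0;v)}=0$, and Theorem~\ref{th-2.1} yields that $F_{IV}$ is a K\"ahler-Berwald metric.

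Completeness then follows from Remark~\ref{RK1}: since \eqref{Gz} holds for $F_{IV}$ at the origin — which is precisely what was just checked — and $\mathfrak{R}_{IV}$ is an irreducible classical domain, the horizontal Chern-Finsler connection coefficients of $F_{IV}$ coincide with the Christoffel symbols of the Chern connection of the Bergman metric on $\mathfrak{R}_{IV}$ given by \eqref{gab}. As the Bergman metric is complete, $F_{IV}$ shares its geodesics as point sets and is therefore complete.

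The only genuinely computational point is the vanishing of the mixed second derivatives of $\widetilde{\pmb{r}}$ and $\widetilde{\pmb{s}}$ at $(0;v)$, and this is where the work would go. Conceptually it is transparent — the Bergman metric is K\"ahler with $0$ the center of a symmetric domain, and $\widetilde{\pmb{s}}$ is assembled from $\widetilde{\pmb{r}}$, the holomorphic quadratic $|vv'|^2$ and $\triangle(z)$, all of which are ``even'' at $0$ — but one must either cite Lu's normal-form description of the Bergman metric in \cite{Look3} or carry out a careful direct differentiation of the explicit formula \eqref{tr}, keeping track of the rational factors $\triangle^{-2}$ and $\widetilde{\pmb{r}}^{-1}$ entering \eqref{ts}.
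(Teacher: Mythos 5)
Your proposal is correct and follows essentially the same route as the paper: establish $\mbox{Aut}(\mathfrak{R}_{IV})$-invariance and strong pseudoconvexity via Proposition \ref{prop} and Theorem \ref{hfm}, verify the vanishing of $\frac{\partial^2 F_{IV}^2}{\partial z_i\partial\overline{v_a}}$ at $(0;v)$ through the vanishing of the first and mixed second derivatives of $\widetilde{\pmb{r}}$ and $\widetilde{\pmb{s}}$ there, and conclude by Theorem \ref{th-2.1} and Remark \ref{RK1}. The only difference is that the paper simply asserts \eqref{rsz}, whereas you supply the (correct) justification from the explicit formulas \eqref{tr} and \eqref{ts}.
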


\begin{proof}By assumptions, $F_{IV}$ is clear an $\mbox{Aut}(\mathfrak{R}_{IV})$-invariant strongly pseudoconvex complex Finsler metric on $\mathfrak{R}_{IV}$. Thus it suffices to show that it is a K\"ahler-Berwald metric.
	Since at the point $(0;v)\in T_0^{1,0}\mathfrak{R}_{IV}$,
	\begin{eqnarray}
		\frac{\partial \widetilde{\pmb{r}}}{\partial z_i}=0,\quad\frac{\partial^2\widetilde{\pmb{r}}}{\partial z_i\partial\overline{v_a}}=0,\quad
		\frac{\partial \widetilde{\pmb{s}}}{\partial z_i}=0,\quad\frac{\partial^2 \widetilde{\pmb{s}}}{\partial z_i\partial\overline{v_a}}=0,\label{rsz}
	\end{eqnarray}
	thus at $(0;v)$, we have
	\begin{equation}
		\frac{\partial^2F_{IV}^2}{\partial z_i\partial \overline{v_a}}=0.\label{V-IV}
	\end{equation}
	By Theorem \ref{th-2.1} and Remark \ref{RK1}, $F_{IV}$ is a complete K\"ahler-Berwald metric.
\end{proof}

\begin{theorem}\label{HS-IV}
	For any nonzero tangent vectors $v,w\in T_0^{1,0}\mathfrak{R}_{IV}$, the holomorphic sectional curvature $K_{IV}$ and the holomorphic bisectional
	curvature $B_{IV}$ of $F_{IV}$ are given respectively by
	\begin{eqnarray*}
		K_{IV}(0; v)&=&-\frac{2}{N\phi^2(\widetilde{\pmb{s}})}\left\{\phi(\widetilde{\pmb{s}})+(1-\widetilde{\pmb{s}})\left[\phi(\widetilde{\pmb{s}})-2\widetilde{\pmb{s}}\phi'(\widetilde{\pmb{s}})\right]\right\},\label{hsc-IV}\\
		B_{IV}(0; v, w)	&=&-\frac{2}{N}\cdot\frac{\phi\left[\widetilde{\pmb{r}}(0;v)\widetilde{\pmb{r}}(0;w)-|vw'|^2\right]+2\widetilde{\pmb{s}}\phi'|vw'|^2+(\phi-2\widetilde{\pmb{s}}\phi')|v\overline{w'}|^2}{F_{IV}^2(0;v)F_{IV}^2(0;w)}.\label{hbsc-IV}
	\end{eqnarray*}
	That is,  $K_{IV}$ is bounded between two negative constants $-A $ and $-B$, where $$A=\max\limits_{\widetilde{\pmb{s}}\in[0,1]}\left\{\frac{2\left\{\phi(\widetilde{\pmb{s}})+(1-\widetilde{\pmb{s}})\left[\phi(\widetilde{\pmb{s}})-2\widetilde{\pmb{s}}\phi'(\widetilde{\pmb{s}})\right]\right\}}{N\phi^2(\widetilde{\pmb{s}})}\right\}$$
	and
	$$ B=\min\limits_{\widetilde{\pmb{s}}\in[0,1]}\left\{\frac{2\left\{\phi(\widetilde{\pmb{s}})+(1-\widetilde{\pmb{s}})\left[\phi(\widetilde{\pmb{s}})-2\widetilde{\pmb{s}}\phi'(\widetilde{\pmb{s}})\right]\right\}}{N\phi^2(\widetilde{\pmb{s}})}\right\}.
	$$
	
	If furthermore $\phi'\geq0,$  then $B_{IV}$  is always nonpositive, and is bounded from below by a negative constant $-C$, with $$C=\max\limits_{\widetilde{\pmb{s}}\in[0,1]}\left\{\frac{2}{N}\cdot\frac{\phi\left[\widetilde{\pmb{r}}(0;v)\widetilde{\pmb{r}}(0;w)-|vw'|^2\right]+2\widetilde{\pmb{s}}\phi'|vw'|^2+(\phi-2\widetilde{\pmb{s}}\phi')|v\overline{w'}|^2}{F_{IV}^2(0;v)F_{IV}^2(0;w)}\right\}.$$
\end{theorem}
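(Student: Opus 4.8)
The proof rests on the reduction already set up in Section \ref{section-2}. By the preceding theorem $F_{IV}$ is an $\mbox{Aut}(\mathfrak{R}_{IV})$-invariant K\"ahler--Berwald metric, and by \eqref{V-IV} we have $\varGamma_{;i}^l(0;v)=0$ for every nonzero $v\in T_0^{1,0}\mathfrak{R}_{IV}$; hence Proposition \ref{hsc} and Proposition \ref{bhsc} apply at the origin. Thus the whole computation reduces to the single ``base Hessian'' $\frac{\partial^2F_{IV}^2}{\partial z_i\partial\overline{z_j}}(0;v)$, contracted against $v_i\overline{v_j}$ to obtain $K_{IV}(0;v)$ and against $w_i\overline{w_j}$ to obtain $B_{IV}(0;v,w)$. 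Since $K$ and $B$ are $\mbox{Aut}(\mathfrak{R}_{IV})$-invariant and $\mbox{Aut}(\mathfrak{R}_{IV})$ is transitive, it suffices to work at $z=0$, and the values at an arbitrary point then follow by pulling back along an automorphism sending that point to the origin.

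To compute the base Hessian I would Taylor expand $F_{IV}^2(z;v)=\widetilde{\pmb{r}}(z;v)\,\phi(\widetilde{\pmb{s}}(z;v))$ in $z$ to second order about $z=0$. By \eqref{rsz} the first $z$-derivatives of $\widetilde{\pmb{r}}$ and $\widetilde{\pmb{s}}$ vanish at $(0;v)$, so the chain rule collapses to
\[
\frac{\partial^2F_{IV}^2}{\partial z_i\partial\overline{z_j}}(0;v)=\phi(\widetilde{\pmb{s}})\,\frac{\partial^2\widetilde{\pmb{r}}}{\partial z_i\partial\overline{z_j}}(0;v)+\widetilde{\pmb{r}}(0;v)\,\phi'(\widetilde{\pmb{s}})\,\frac{\partial^2\widetilde{\pmb{s}}}{\partial z_i\partial\overline{z_j}}(0;v),
\]
with $\widetilde{\pmb{s}}=\widetilde{\pmb{s}}(0;v)$; the $\phi''$ term and the mixed first-order terms drop out. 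The $\widetilde{\pmb{r}}$-Hessian is read off from \eqref{tr} by expanding $\triangle(z)^{-2}$ and the matrix inside the bracket to second order in $z$ (all $|zz'|^2$- and rank-one-times-rank-one contributions are of order four and are discarded), giving $\frac{\partial^2\widetilde{\pmb{r}}}{\partial z_i\partial\overline{z_j}}(0;v)=2N\bigl(2(v\overline{v'})\delta_{ij}-2v_i\overline{v_j}+2\overline{v_i}v_j\bigr)$. For $\widetilde{\pmb{s}}$ it is most efficient to use \eqref{ts} in the form $\widetilde{\pmb{s}}(z;v)=4N^2|vv'|^2\big/\bigl(\triangle(z)^2\,\widetilde{\pmb{r}}(z;v)^2\bigr)$ and take logarithms: since all relevant first $z$-derivatives vanish at $z=0$, the $(1,1)$-part in $z$ of $\log\widetilde{\pmb{s}}$ at $0$ equals $(\partial_{z_i}\partial_{\overline{z_j}}\widetilde{\pmb{s}})(0;v)/\widetilde{\pmb{s}}(0;v)$, and likewise for $\log\triangle$ and $\log\widetilde{\pmb{r}}$, so the $\widetilde{\pmb{s}}$-Hessian is expressed immediately through the $\widetilde{\pmb{r}}$-Hessian and the trivial Hessian of $\triangle$. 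Substituting back, contracting with $v_i\overline{v_j}$, and normalizing via $\widetilde{\pmb{r}}(0;v)=2N\,v\overline{v'}$ and $\widetilde{\pmb{s}}(0;v)=|vv'|^2/(v\overline{v'})^2$ yields the stated formula for $K_{IV}(0;v)$; contracting instead with $w_i\overline{w_j}$ and using $\sum_{i,j}v_i\overline{v_j}w_i\overline{w_j}=|vw'|^2$ and $\sum_{i,j}\overline{v_i}v_jw_i\overline{w_j}=|v\overline{w'}|^2$ yields the formula for $B_{IV}(0;v,w)$.

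For the bounds, note that the formula for $K_{IV}(0;v)$ depends on $v$ only through $\widetilde{\pmb{s}}=\widetilde{\pmb{s}}(0;v)$, which ranges exactly over $[0,1]$ (the inequality $\widetilde{\pmb{s}}\le1$ is Cauchy--Schwarz $|vv'|\le v\overline{v'}$, and the values $\widetilde{\pmb{s}}=1$, $\widetilde{\pmb{s}}=0$ are attained at $v=e_1$ and $v=e_1+ie_2$). Hence, using transitivity, $\inf K_{IV}=-A$ and $\sup K_{IV}=-B$ with $A$, $B$ the maximum and minimum over $[0,1]$ of the continuous function $\widetilde{\pmb{s}}\mapsto\frac{2\{\phi+(1-\widetilde{\pmb{s}})[\phi-2\widetilde{\pmb{s}}\phi']\}}{N\phi^2}$; this function is bounded between two positive constants on $[0,1]$ because $\phi>0$, $1-\widetilde{\pmb{s}}\ge0$, and $\phi-2\widetilde{\pmb{s}}\phi'>0$ by \eqref{sn}, so $-A$ and $-B$ are genuinely negative and are attained. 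For $B_{IV}$ under the extra hypothesis $\phi'\ge0$, I would show the numerator is nonnegative term by term: $\phi\,[\widetilde{\pmb{r}}(0;v)\widetilde{\pmb{r}}(0;w)-|vw'|^2]\ge0$ by Cauchy--Schwarz and $\phi>0$; $2\widetilde{\pmb{s}}\phi'|vw'|^2\ge0$ since $\widetilde{\pmb{s}}\ge0$ and $\phi'\ge0$; and $(\phi-2\widetilde{\pmb{s}}\phi')|v\overline{w'}|^2\ge0$ by \eqref{sn}. Therefore $B_{IV}\le0$; moreover the quotient defining $B_{IV}$ is separately $0$-homogeneous in $v$ and in $w$, so it descends to a continuous function on the compact space $\mathbb{CP}^{N-1}\times\mathbb{CP}^{N-1}$ and attains a finite minimum, which (again by transitivity) gives the lower bound $-C$.

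The main obstacle is the second-order Taylor expansion of $\widetilde{\pmb{r}}(z;v)$ from the explicit but bulky formula \eqref{tr}: one must correctly isolate the $(1,1)$-part in $z$ while systematically discarding the numerous fourth-order contributions arising from $\triangle(z)$, from $|zz'|^2$, and from the several rank-one blocks in the bracket. Once this Hessian is secured, the logarithmic identity for $\widetilde{\pmb{s}}$ and the two polarized Cauchy--Schwarz identities make the remaining algebra and the estimates routine.
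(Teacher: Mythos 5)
Your proposal is correct and follows essentially the same route as the paper: reduce to the base Hessian via Propositions \ref{hsc} and \ref{bhsc} using \eqref{rsz}, compute $\widetilde{\pmb{r}}_{;i\bar j}(0;v)$ and $\widetilde{\pmb{s}}_{;i\bar j}(0;v)$, contract with $v_i\overline{v_j}$ resp. $w_i\overline{w_j}$, and then obtain the bounds from $\phi>0$, $\phi-2\widetilde{\pmb{s}}\phi'>0$ and (for $B_{IV}$) term-by-term nonnegativity of the numerator. The only cosmetic differences are your logarithmic-derivative shortcut for the $\widetilde{\pmb{s}}$-Hessian and the explicit compactness argument for attaining $A,B,C$, which the paper relegates to a remark.
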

\begin{proof}Denote $G:=F_{IV}^2(z;v)=\widetilde{\pmb{r}}\phi(\widetilde{\pmb{s}})$. Then
	\begin{eqnarray}
		%G_{; \alpha}&=&\widetilde{\pmb{r}}_{; \alpha}\phi+\widetilde{\pmb{r}}\phi'\widetilde{\pmb{s}}_{; \alpha}\nonumber \\
		G_{; i\bar{j}}&=&\widetilde{\pmb{r}}_{; i\bar{j}}\phi+\widetilde{\pmb{r}}_{; i}\phi'\widetilde{\pmb{s}}_{; \bar{j}}+\widetilde{\pmb{r}}_{;\bar{j}}\phi'\widetilde{\pmb{s}}_{;i}
		+\widetilde{\pmb{r}}\phi''\widetilde{\pmb{s}}_{;i}\widetilde{\pmb{s}}_{;\bar{j}}+\widetilde{\pmb{r}}\phi'\widetilde{\pmb{s}}_{;i\bar{j}},
	\end{eqnarray}
	where $\phi', \phi''$ denote the derivatives of $\phi$ with respect to $\widetilde{\pmb{s}}$.
	Using \eqref{rsz},
	we have
	\begin{eqnarray}
		G_{;i\bar{j}}(0;v)&=&\left(\widetilde{\pmb{r}}_{; i\bar{j}}\phi+\widetilde{\pmb{r}}\phi'\widetilde{\pmb{s}}_{;i\bar{j}}\right)(0;v).\label{K;3}
	\end{eqnarray}
	Since at the point $(0;v)$, we have
	\begin{eqnarray}
		\widetilde{\pmb{r}}_{;i\bar{j}}(0;v)&=&4N\left(v\overline{v'}\delta_{ij}-v_i\overline{v_j}+v_j\overline{v_i}\right),\label{r;ij}\\
		\widetilde{\pmb{s}}_{;i\bar{j}}(0;v)&=&
		=\frac{4|vv'|^2}{(v\overline{v'})^3}(v_i\overline{v_j}-v_j\overline{v_i})
		=\frac{8N\widetilde{\pmb{s}}}{\widetilde{\pmb{r}}}(v_i\overline{v_j}-v_j\overline{v_i}).\label{s;ij}	
	\end{eqnarray}
	Substituting \eqref{r;ij} and \eqref{s;ij} into \eqref{K;3}, we have
	\begin{eqnarray}
		G_{;i\bar{j}}(0;v)&=&2\widetilde{\pmb{r}}\phi \delta_{ij}+4N(\phi-2\widetilde{\pmb{s}}\phi')(v_j\overline{v_i}-v_i\overline{v_j}).
		\label{K;5}
	\end{eqnarray}
	Thus by Proposition \ref{hsc}, for any nonzero tangent vector $v\in T_0^{1,0}\mathfrak{R}_{IV}$, we have
	\begin{eqnarray*}
		K_{IV}(0; v)
		&=&-\frac{2}{F_{IV}^4(0;v)}\sum_{i,j=1}^N\frac{\partial^2F_{IV}^2}{\partial z_i\partial \overline{z_j}}(0;v)v_i\overline{v_j}\\
		&=&-\frac{4}{\widetilde{\pmb{r}}^2\phi^2}\left\{\frac{1}{2N}\widetilde{\pmb{r}}^2\phi+2N(\phi-2\widetilde{\pmb{s}}\phi')\left[(v\overline{v'})^2-|vv'|^2\right]\right\}\\
		&=&-\frac{2}{N\phi^2(\widetilde{\pmb{s}})}\left\{\phi(\widetilde{\pmb{s}})+(1-\widetilde{\pmb{s}})\left[\phi(\widetilde{\pmb{s}})-2\widetilde{\pmb{s}}\phi'(\widetilde{\pmb{s}})\right]\right\}.\label{K;6}
	\end{eqnarray*}
	Notice that $\widetilde{\pmb{s}}\in[0,1]$ and  $\phi-2\pmb{s}\phi'>0$, we have
	$$\phi\leq\left\{\phi(\widetilde{\pmb{s}})+(1-\widetilde{\pmb{s}})\left[\phi(\widetilde{\pmb{s}})-2\widetilde{\pmb{s}}\phi'(\widetilde{\pmb{s}})\right]\right\}
	\leq2(\phi-s\phi').$$
	Thus		
	$$ -A\leq K_{IV}(0; v)\leq -B. $$
	That is, $K_{IV}$ is bounded between two negative constants $-A $ and $-B$, where $$A=\max\limits_{\widetilde{\pmb{s}}\in[0,1]}\left\{\frac{2\left\{\phi(\widetilde{\pmb{s}})+(1-\widetilde{\pmb{s}})\left[\phi(\widetilde{\pmb{s}})-2\widetilde{\pmb{s}}\phi'(\widetilde{\pmb{s}})\right]\right\}}{N\phi^2(\widetilde{\pmb{s}})}\right\}$$
	and
	$$ B=\min\limits_{\widetilde{\pmb{s}}\in[0,1]}\left\{\frac{2\left\{\phi(\widetilde{\pmb{s}})+(1-\widetilde{\pmb{s}})\left[\phi(\widetilde{\pmb{s}})-2\widetilde{\pmb{s}}\phi'(\widetilde{\pmb{s}})\right]\right\}}{N\phi^2(\widetilde{\pmb{s}})}\right\}.
	$$
	
	Since  $K_{IV}$ is $\mbox{Aut}(\mathfrak{R}_{IV})$-invariant,	it follows that for any nonzero vector $v\in T_z^{1,0}\mathfrak{R}_{IV}$,
	we have \eqref{hsc-IV}.
	
	By Proposition \ref{bhsc} and \eqref{K;5}, for any two  nonzero tangent vectors $v,w\in T_0^{1,0}\mathfrak{R}_{IV}$, we have
	\begin{eqnarray*}
		B_{IV}(0;v,w)
		&=&-\frac{2}{F_{IV}^2(0;v)F_{IV}^2(0;w)}\sum_{i,j=1}^N\frac{\partial^2 F_{IV}^2}{\partial z_i\partial\overline{z_j}}(0;v)w_i\overline{w_j}\\
		&=&-\frac{2}{N}\cdot\frac{\widetilde{\pmb{r}}(0;v)\widetilde{\pmb{r}}(0;w)\phi+(\phi-2\widetilde{\pmb{s}}\phi')\left(|v\overline{w'}|^2-|vw'|^2\right)}
		{F_{IV}^2(0;v)F_{IV}^2(0;w)}\\
		&=&-\frac{2}{N}\cdot\frac{\phi\left[\widetilde{\pmb{r}}(0;v)\widetilde{\pmb{r}}(0;w)-|vw'|^2\right]+2\widetilde{\pmb{s}}\phi'|vw'|^2+(\phi-2\widetilde{\pmb{s}}\phi')|v\overline{w'}|^2}{F_{IV}^2(0;v)F_{IV}^2(0;w)}.
	\end{eqnarray*}
	Since
	$$\widetilde{\pmb{r}}(0;v)\widetilde{\pmb{r}}(0;w)-|vw'|^2=(v\overline{v'})(w\overline{w'})-|vw'|^2 \geq0,\quad \phi-2s\phi'>0,$$
	thus if furthermore $\phi'\geq0$, 	we have
	$$
	-\frac{2}{N}\cdot\frac{1}{\phi(\widetilde{\pmb{s}}(0;w))}%\left\{1+\frac{(\phi-2\widetilde{\pmb{s}}\phi')}{\phi}\frac{|vw'|^2}{\widetilde{\pmb{r}}(0;v)\widetilde{\pmb{r}}(0;w)}\right\}
	\leq B_{IV}(0;v,w)
	\leq
	-\frac{4}{N}\cdot\frac{\widetilde{\pmb{s}}\phi'|vw'|^2}{F_{IV}^2(0;v)F_{IV}^2(0;w)}.
	$$		
	That is, $B_{IV}$ is always nonpositive, and is bounded from below by a negative constant $-C$ with $$C=\max\limits_{\widetilde{\pmb{s}}\in[0,1]}\left\{\frac{2}{N}\cdot\frac{\phi\left[\widetilde{\pmb{r}}(0;v)\widetilde{\pmb{r}}(0;w)-|vw'|^2\right]+2\widetilde{\pmb{s}}\phi'|vw'|^2+(\phi-2\widetilde{\pmb{s}}\phi')|v\overline{w'}|^2}{F_{IV}^2(0;v)F_{IV}^2(0;w)}\right\}.$$
\end{proof}

\begin{remark}
	(1) The constants $A,B,C$ in Theorem \ref{HS-IV} can be attained since $K_{IV}(0;v)$  is actually a smooth function for any
	$(0;[v])\in PT_0^{1,0}\mathfrak{R}_{IV}\cong\mathbb{CP}^{N-1}$, and $B_{IV}(0;v, w)$ is also a smooth function for any $(0;[v])$ and $(0;[w])$ in $PT_0^{1,0}\mathfrak{R}_{IV}\cong\mathbb{CP}^{N-1}$, here  $PT_0^{1,0}\mathfrak{R}_{IV}$ denotes the projective fiber of $T_0^{1,0}\mathfrak{R}_{IV}\cong \mathbb{C}^N$. Since $\mathfrak{R}_{IV}$ is homogeneous, the constants $A,B$ are the same for $K_{IV}(z;v)$ for any $(z;v)\in \widetilde{T^{1,0}\mathfrak{R}_{IV}}$, and the constant $C$ is the same for $B_{IV}(z;v,w)$ for any $(z;v),(z;w)\in \widetilde{T^{1,0}\mathfrak{R}_{IV}}$.
	
	(2) In particular, taking $\phi(\pmb{s})\equiv 1$, we get the holomorphic sectional curvature and the holomorphic bisectional curvature of the Bergman metric on $\mathfrak{R}_{IV}$ as follows:
	$$
	K_{IV}(0; v)=-\frac{2}{N}\left\{2-\frac{|vv'|^2}{(v\overline{v'})^2}\right\}, \quad\forall 0\neq v,w\in T_0^{1,0}\mathfrak{R}_{IV},
	$$		
	$$
	B_{IV}(0;v,w)=-\frac{2}{N}\cdot \frac{(v\overline{v'})(w\overline{w'})-|vw'|^2+|v\overline{w'}|^2}
	{(v\overline{v'})(w\overline{w'})}, \quad\forall 0\neq v,w\in T_0^{1,0}\mathfrak{R}_{IV}.
	$$
	
	(3)  By Propsition \ref{prop} and Remark \ref{remark}, for $\phi(\pmb{s})=e^{1+t\sqrt[k]{1+\pmb{s}}}$ with any fixed $k\geq2$ and any fixed $t\in(0,\frac{1}{4k})$, the corresponding  $F_{IV}$ is non-Hermitian quadratic holomorphic invariant complete K\"ahler-Berwald metric with $\phi'>0.$
\end{remark}

We also obtain the corresponding Schwarz lemma for holomorphic mappings $f$ from $\mathfrak{R}_A$ into itself whenever $\mathfrak{R}_A$ is endowed
with the $\mbox{Aut}(\mathfrak{R}_A)$-invariant K\"ahler-Berwald metric $F_A$ for $A=I,II,III, IV$, respectively. This will appear elsewhere.
\vskip0.4cm
\textbf{Acknowledgements}\quad  This work was supported by the National Natural Science Foundation of China (Grant Nos. 12071386, 11671330). The authors are also very grateful to the anonymous referees for careful reading of the manuscript and very valuable suggestions.

\vskip0.4cm

\end{document}